\def\R{\mathbb{R}}
 \def\Z{\mathbb Z}
\newtheorem{theorem}{Theorem}[section]
\newtheorem{corollary}[theorem]{Corollary}
\newtheorem{proposition}[theorem]{Proposition}
\newtheorem{lemma}[theorem]{Lemma}
\newtheorem{remark}[theorem]{Remark}
\newtheorem{question}[theorem]{Question}
\newtheorem{conjecture}[theorem]{Conjecture}
\title{Small dilatation pseudo-Anosov mapping classes and short circuits on train track automata}
\author{Eriko Hironaka}
\begin{document}

\begin{abstract}
This note is a survey of recent results
 surrounding the minimum dilatation problem for pseudo-Anosov mapping classes.
In particular, we give evidence for the conjecture that the minimum accumulation point of 
the genus normalized dilatations of pseudo-Anosov mapping classes on closed surfaces
equals the square of the golden ratio.  We also find explicit fat train track maps determining
a sequence of pseudo-Anosov mapping classes whose normalized dilatations
converge to this limit.  \end{abstract}

\maketitle

\section{Introduction} 
Let $S$ be a compact surface of genus $g$ with $b$ boundary components.
 A {\it mapping class} $\phi$ on $S$ is
a self-homeomorphism of $S$ considered up to isotopy.  
The map $\phi : S \rightarrow S$ is {\it pseudo-Anosov}
if $S$  admits a pair of $\phi$-invariant transverse measured singular foliations, called the
 {\it unstable foliation} $(\mathcal F^u,\nu^u)$ and  {\it stable foliation} $(\mathcal F^s,\nu^s)$, so that
the action of $\phi$ stretches  $\nu^u$ by a constant $\lambda > 1$, and contracts 
$\nu^s$ by $\frac{1}{\lambda}$.  The constant $\lambda$ has the property that
 $\log (\lambda)$ is the minimal topological entropy of elements in the isotopy class of $\phi$
 and is called the {\it dilatation} of $\phi$.
 The theory of pseudo-Anosov mapping classes is developed in detail in \cite{FLP79}, \cite{CB88}
and \cite{Thurston88}.

In a 1991 paper, Penner  \cite{Penner91} proved
that as a function of genus $g \ge 2$,  the minimum dilatation $\delta_g$ for pseudo-Anosov mapping
classes on closed genus $g$ surfaces satisfies
\begin{eqnarray}\label{Penner-eqn}
\log \delta_g \asymp \frac{1}{g}.
\end{eqnarray}
Penner's paper has brought recent interest to
the minimum dilatation problem, which asks what are the values of
$\delta_g$ for $g \ge 2$, and what are the mapping classes that realize these values.
So far the exact value of the  minimum dilatation $\delta_g$ is known only for $g = 2$ \cite{CH08}.   
In this paper we give a brief survey of the minimum dilatation problem and its relations to the study of train track maps,
digraphs, polynomials and algebraic integers, and give an illustrative example.

\subsection{Lehmer's problem and dilatations}
Questions surrounding the values of $\delta_g$
are closely analogous to Lehmer's problem on Mahler measures. 
Dilatations of pseudo-Anosov mapping classes are special algebraic integers called Perron numbers.  These
are real algebraic integers $\lambda > 1$ all of whose algebraic conjugates are strictly smaller
in complex norm.  Furthermore, dilatations have the property that $\lambda^{-1}$ is also
an algebraic integer, and hence $\lambda$ is an algebraic unit.  The Mahler measure 
$m(\lambda)$ of an algebraic integer 
$\lambda$ is the absolute value of the product of its conjugates outside the unit circle.  
In \cite{Lehmer33} Lehmer asks: is there is a positive gap  between 1 and the
next largest Mahler measure?  A negative answer would mean that the set of Mahler measures is dense in the interval $[1,\infty)$.
Lehmer's question leads immediately to several others.

For each fixed degree $n$, any bound on Mahler measure bounds the size of the coefficients
of the minimal polynomial, and hence the Mahler measures greater than one for 
algebraic integers of fixed degree $n$ achieve a minimum
$m_n > 1$.  It is not known how $m_n$ behaves as $n$ goes to infinity, 
nor about properties of the algebraic integers achieving $m_n$.  For example: is there
a bound on the number of algebraic conjugates outside the unit circle?

The 
complex norm $h(\lambda)$ of the largest conjugate of an algebraic integer $\lambda$ is called  the {\it house}
of $\lambda$.  The  {\it normalized house} 
$$h(\lambda)^{d_{\mbox{\tiny alg}}}$$ 
is the house raised to the degree of the minimal polynomial.  
It is not known whether this coarse upper 
bound for Mahler measure is bounded away from one for non-cyclotomic algebraic integers (cf. \cite{Dobrowolski79}).

\subsection{Perron numbers} For Perron numbers, there is an alternative way to normalize house, other than algebraic degree.  Each Perron number is the spectral radius of a {\it Perron-Frobenius matrix}:
a $d \times d$ matrix $M$ with non-negative integer entries such that for some power $k \ge 1$, $M^k$ has strictly positive entries.  The
 minimum such $d$, which is an upper bound for $d_{\mbox{\tiny alg}}$, is the degree of the characteristic polynomial of $M$,
 called the {\it Perron-Frobenius degree} of the Perron number.  McMullen
 recently showed in \cite{McM:Clique}
  that for Perron units $\lambda$ with Perron-Frobenius degree $d_{\mbox{\tiny PF}}$, we have
 \begin{eqnarray}\label{McMullen-eqn}
 \lambda^{d_{\mbox{\tiny PF}}} \ge \gamma_0^4,
 \end{eqnarray}
 where $\gamma_0$ is the golden ratio.

\subsection{Normalized dilatations}
It is an open question whether all Perron units are dilatations of pseudo-Anosov mapping classes (partial results in this direction were found by
Thurston in \cite{Thurston:Perron}).
Define the   {\it genus-normalized} dilatation to be
$ \lambda(\phi)^g$
and let $\ell_g = (\delta_g)^g$, the minimum genus-normalized dilatation for fixed genus $g$.
Penner's result (\ref{Penner-eqn}) is equivalent to the statement that there are constants $c$ and $C$ so that
$$
1 < c \leq \ell_g \leq C.
$$
It is an open problem to determine sharp bounds for $c$ and $C$, or to find the limit of $\ell_g$
as $g$ goes to infinity.

McMullen's result (\ref{McMullen-eqn}) on normalized Perron units is evidence for the following conjecture.

\begin{conjecture}\label{goldenmean0-conj}  The smallest accumulation point for the sequence $\ell_g$
is $\gamma_0^2$.
\end{conjecture}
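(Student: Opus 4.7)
The plan is to prove $\liminf_{g\to\infty}\ell_g = \gamma_0^2$, which is equivalent to the conjecture. This decomposes naturally into an upper bound (that $\gamma_0^2$ is an accumulation point of $(\ell_g)$) and a lower bound (that no smaller number is). I would attack these separately; the upper bound looks tractable via an explicit construction, while the lower bound is the main obstacle.

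For the upper bound, I would produce a subsequence $(\phi_i, S_{g_i})$ with $\ell_{g_i}\to\gamma_0^2$ from an explicit fat train track automaton $\sA$ of finite size, as hinted at in the abstract. On such an $\sA$ every closed directed loop encodes a pseudo-Anosov splitting sequence, and the transition matrix of the loop is Perron-Frobenius with spectral radius the associated dilatation. For a one-parameter family of loops, the quantity $\lambda(\phi_i)^{g_i}$ is then obtained by specializing a single bivariate polynomial along a ray in the associated fibered cone. Arranging the ray to approach a boundary face, Fried's continuity of $\phi\mapsto\log\lambda(\phi)\cdot|\chi(S_\phi)|$ on the cone guarantees the existence of $\lim_i \lambda(\phi_i)^{|\chi(S_{g_i})|}$. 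Since $|\chi(S_g)|/g\to 2$, arranging this limit to be $\gamma_0^4$ is equivalent to arranging $\ell_{g_i}\to\gamma_0^2$; and $\gamma_0^2$ is the dominant root of $x^2-3x+1$, a known boundary specialization of several Teichm\"uller-type polynomials, so a suitable $\sA$ is available.

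For the lower bound I would invoke McMullen's inequality~(\ref{McMullen-eqn}): every dilatation $\lambda(\phi)$ is a Perron unit, so $\lambda(\phi)^{d_{\mbox{\tiny PF}}}\geq\gamma_0^4$. The desired $\ell_g\geq\gamma_0^2$ would then follow from a universal estimate $d_{\mbox{\tiny PF}}(\phi)\leq 2g$ for every pseudo-Anosov $\phi$ on a closed surface of genus $g$, since this gives $\lambda^{2g}\geq\lambda^{d_{\mbox{\tiny PF}}}\geq\gamma_0^4$, hence $\ell_g = \lambda^g \geq \gamma_0^2$.

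The hard part is exactly this control on $d_{\mbox{\tiny PF}}$. Fat train tracks on $S_g$ routinely have many more than $2g$ branches, and the full transition matrix need not reduce to a Perron-Frobenius block of size $\leq 2g$ that retains the dilatation as its spectral radius; the symplectic action on $H_1(S_g;\Z)$, though of dimension exactly $2g$, often captures only a proper factor of the minimal polynomial of $\lambda(\phi)$. A successful argument will therefore likely need either a new reduction extracting an essential PF-block of size linear in $g$ from the full invariant train track, or a strengthening of~(\ref{McMullen-eqn}) that exploits the symplectic and palindromic features specific to dilatations of surface homeomorphisms. Absent either, the best fallback is to prove the lower bound along rays in fibered cones matching the upper-bound construction, which would yield the conjecture for a restricted family of $g$ but leave the general uniform statement open.
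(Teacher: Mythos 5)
The statement you are attempting to prove is a \emph{conjecture} in the paper, and the paper does not prove it; it only establishes the upper half (that $\gamma_0^2$ is an accumulation point of $\ell_g$) and explicitly explains why the lower half remains open. Your proposal correctly reproduces this state of affairs, and your diagnosis of the obstruction is exactly the one the paper gives.

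On the upper bound, your strategy — produce a one-parameter family of circuits in a fat train track automaton, read off dilatations from specializations of a Teichm\"uller polynomial along a ray, and use Fried's convexity/continuity (Theorem~\ref{Fried-thm}, Corollary~\ref{Fried-cor}) to get convergence of the normalized dilatation as the ray approaches a boundary specialization with value $\gamma_0^4$ — is precisely what the paper does. The explicit automaton circuits are in Section~\ref{example-section}, the limiting specialization $x^3-3x+1$ (equivalently $\gamma_0^2$ as house) and the identification with the simplest hyperbolic braid are in the $M_s$ discussion, and the resulting sequence with $\ell_{g}\to\gamma_0^2$ is the content of Theorem~\ref{example-thm} and Corollary~\ref{goldenmean-cor}.

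On the lower bound, you correctly isolate that using McMullen's inequality $\lambda^{d_{\mbox{\tiny PF}}}\ge\gamma_0^4$ would require $d_{\mbox{\tiny PF}}\le 2g$ uniformly for pseudo-Anosov maps on closed genus-$g$ surfaces, and you correctly flag that this is not available. The paper says the same thing, and more: it cites Penner to the effect that $d_{\mbox{\tiny PF}}$ can be as large as $6g-6$, so the needed bound is in fact \emph{false} in general, not merely unknown. Thus your candidate route cannot work as stated; any proof of the lower bound would have to exploit structure beyond the Perron-Frobenius degree (e.g.\ the symplectic/reciprocal constraints you mention, or a genuinely different mechanism). Your fallback — proving a lower bound only along the specific fibered cone used for the upper bound — would not establish the conjecture, since the infimum in $\ell_g$ is over all pseudo-Anosov maps of genus $g$, not just those in one fibered cone. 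In summary, your proposal is an accurate and honest assessment of why Conjecture~\ref{goldenmean0-conj} is believed but not proved, but it is not a proof, and no proof appears in the paper either.
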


\noindent
For the pseudo-Anosov mapping classes 
$(S_g,\phi_g)$ that we later describe in this paper, the surfaces
$S_g$ have genus $g$, the normalized dilatations $\lambda(\phi_g)^g$ converge to $\gamma_0^2$,
hence $\gamma_0^2$ is an upper bound for the smallest accumulation point.  
This together with McMullen's result (\ref{McMullen-eqn}) 
is not enough to prove the conjecture, however, since in general
 both $d_{\mbox{\tiny alg}}$ and $2g$ can be strictly smaller than $d_{\mbox{\tiny PF}}$, and
the latter can be as large as $6g-6$ \cite{Penner91}.  

Conjecture~\ref{goldenmean0-conj} was  originally inspired by a question of
Lanneau and Thiffeault posed in \cite{LT09}.  An orientable pseudo-Anosov mapping class
is one where the stable and unstable foliations are orientable.   Lanneau and Thiffeault
ask whether for orientable 
pseudo-Anosov mapping classes on surfaces of even genus, the minimum dilatation is the largest real root of the polynomial
$$
LT_n(x) = x^{2n} - x^{n+1} - x^n - x^{n-1} + 1.
$$
If $\lambda_n$ is the largest root of $LT_n(x)$, then it is not hard to show that 
$(\lambda_n)^n$ is a monotone decreasing sequence converging to $\gamma_0^2$.

\subsection{Main example}
In this paper, we
explicitly define a sequence of pseudo-Anosov mapping classes whose genus normalized dilatations
define a strictly monotone decreasing sequence converging to $\gamma_0^2$.  The existence of such sequences was already proved in \cite{Hironaka:LT}  \cite{AD10} and \cite{KT11}, but the description
we give here, using the language of fat train track maps and digraphs, is the  first constructive one, 
and serves to give a glimpse of what small dilatation mapping classes  look like in general.

We show the following.

\begin{theorem}\label{example-thm} There is a sequence of pseudo-Anosov mapping classes $(S_n,\phi_n)$
described by fat train track maps $f_n : \tau_n \rightarrow \tau_n$, $n \ge 2$ with the following properties:
\begin{enumerate}
\item  $S_n$ is a closed orientable surface of genus $g=n$ if $3$ doesn't divide $n$ and
genus $g=n-1$ if $3$ divides $n$,
\item $\lambda(\phi_n)$ is the largest real root of $LT_n(x)$,
\item the genus-normalized dilatations of $(S_n,\phi_n)$ converge to $\gamma_0^2$.
\item $(S_n,\phi_n)$ is an orientable mapping class if and only if $n$ is
even,
\item $(S_n,\phi_n)$ have the smallest dilatation among orientable pseudo-Anosov mapping classes
of genus $g = n$ when $n = 2,4,8$, and of genus $g = 5$ when $n = 6$.
\item the train track maps $f_n$ have folding decompositions 
corresponding to  length 3 circuits on fat train track automata, and
\item the topological type of the digraph associated to the train track map $f_n$ is fixed for $n \ge 2$.
\end{enumerate}
\end{theorem}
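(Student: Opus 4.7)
The plan is to construct the family $(S_n,\phi_n)$ explicitly and then verify each of the seven properties in turn. First I would specify a fat train track automaton---a finite directed graph whose vertices are combinatorial types of fat train tracks and whose edges are elementary folds---and single out a length-3 circuit $C$ on it. Iterating the folding word around $C$ an $n$-dependent number of times (with suitable entering and exiting moves) produces a combinatorial fat train track $\tau_n$ together with a self-map $f_n:\tau_n\to\tau_n$. By construction this immediately gives (6); since the digraph of $f_n$ is assembled from three fixed local pieces stitched together in a cyclic pattern whose combinatorial type does not depend on $n$, statement (7) also follows.

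Next I would compute the transition matrix $M_n$ of $f_n$ on the real edge-weight space of $\tau_n$. Because $f_n$ is built from $n$ iterations of a length-3 circuit, $M_n$ has a block companion/circulant structure and its characteristic polynomial can be evaluated by direct Laplace expansion; the expected output is $LT_n(x)$, possibly multiplied by cyclotomic factors that will not affect the spectral radius. Bestvina-Handel theory then identifies the Perron root of $M_n$ with the dilatation, proving (2). Statement (3) drops out by a short asymptotic: writing $y=x^n$ and dividing $LT_n(x)=0$ by $x^n$ gives $y-x-1-x^{-1}+y^{-1}=0$, which in the limit $x\to 1$ reduces to $y^2-3y+1=0$, whose larger root is $\gamma_0^2$. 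Combined with the monotonicity of $\lambda_n^n$, this is enough.

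For (1) and (4) I would read the singularity data of the invariant foliations directly off the fat structure at the vertices of $\tau_n$. The sum of cone angles, via Euler-Poincar\'e, computes $\chi(S_n)$ once one knows which vertices carry singularities and of what order. The case distinction $g=n$ versus $g=n-1$ according as $3\nmid n$ or $3\mid n$ strongly suggests that the length-3 circuit has an orbit of singular vertices that collapses, or an extra singularity that appears, exactly in the resonant case $3\mid n$; tracking this collapse carefully should yield the stated genus. For orientability I would assign $\pm$ signs to the edges of $\tau_n$ and check that a globally consistent orientation of the unstable foliation exists precisely when the cyclic fold pattern is traversed an even number of times, which will pin orientability to the parity of $n$. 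Part (5) is then a finite verification: one computes $\lambda(\phi_n)$ for $n=2,4,6,8$ and compares with the record values already known from \cite{CH08}, \cite{LT09}, \cite{Hironaka:LT}, \cite{AD10} and \cite{KT11}.

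The main obstacle I anticipate is the singularity count in (1). The genus jump at multiples of $3$ reflects a subtle interaction between the length-3 circuit $C$ and the orbits of prongs under $f_n$, and establishing it cleanly requires simultaneously tracking the fat structure through every fold in the cycle, the action of $f_n$ on prongs at each vertex, and the way vertex orbits group modulo $3$. Getting this combinatorics right, while at the same time confirming the precise polynomial identity claimed in (2), is where I expect the bulk of the work to lie.
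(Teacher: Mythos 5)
Your high-level strategy (explicit fat train tracks, folding circuits, transition matrices, digraphs, then read off polynomials, genus, and orientability) is exactly the paper's strategy. However, the central constructive step is not correct as stated, and two of the seven claims are consequences of the part you got wrong.

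The paper's $\tau_n$ is a \emph{single} fat train track consisting of a central $3n$-gon of infinitesimal edges with $2n$ real edges attached, and the folding decomposition of $f_n$ is a circuit of length exactly $3$ (three folds followed by a rotation) in the automaton whose vertex is $\tau_n$. The train track grows with $n$; the circuit does not. Your proposal instead builds $\tau_n$ by ``iterating the folding word around $C$ an $n$-dependent number of times,'' which, read literally, produces a circuit of length $3n$ on a fixed train track --- this contradicts property (6), which you nevertheless claim ``immediately'' follows. The two pictures are genuinely different: iterating a fixed pseudo-Anosov folding circuit $n$ times would yield dilatation $\lambda^n$ for a fixed $\lambda$, not the root of $LT_n$. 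You need a one-parameter family of train tracks $\tau_n$ on which a \emph{bounded} number of folds acts, not a fixed train track on which an unbounded word acts.

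The same confusion undermines your argument for (7). The theorem asserts that the \emph{topological type} of the digraph is constant; in the paper this is visible because $G_n$ is obtained from one fixed small digraph by subdividing certain edges into chains of length depending on $n$ (so each $G_n$ is homeomorphic to the others but not combinatorially isomorphic). Your sentence --- ``assembled from three fixed local pieces stitched together in a cyclic pattern whose combinatorial type does not depend on $n$'' --- is internally inconsistent: if the combinatorial gluing pattern of a fixed number of fixed pieces were independent of $n$, the digraph and hence its characteristic polynomial would not change with $n$, contradicting (2); and if the number of pieces grows with $n$, the topological type (rank of $H_1$) changes. The fact that needs to be isolated is that growth happens only by edge subdivision.

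The remaining items are at the right level. Your asymptotic argument for (3) (divide $LT_n$ by $x^n$, set $y=x^n$, send $x\to 1$ to get $y^2-3y+1=0$) is standard and fine once you note $\lambda_n\to 1$. For (1) and (4), the paper's route is to count boundary components of the fat graph directly: the complementary regions of $\tau_n$ are the central $3n$-gon and either one $n$-gon or three $n/3$-gons according as $3\nmid n$ or $3\mid n$; orientability then reduces to every complementary polygon having an even number of sides (hence $n$ even). Your singularity/Euler--Poincar\'e route is equivalent, but you have correctly identified that nailing down which complementary region splits when $3\mid n$ is where the work is, and that is precisely the input the paper reads off from the $3n$-gon picture you have not constructed. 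Part (5) as a finite table-lookup against the known records is fine.
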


\begin{corollary}\label{goldenmean-cor} The square of the golden mean $\gamma_0^2$ is an accumulation
point for normalized dilatations of orientable pseudo-Anosov mapping classes.
\end{corollary}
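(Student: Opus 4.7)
The plan is to extract from the sequence $(S_n,\phi_n)$ produced by Theorem~\ref{example-thm} the subsequence of even indices, and observe that all the required properties pass through verbatim. First I would invoke item~(4) of the theorem: for every even $n$, the mapping class $(S_n,\phi_n)$ is orientable, which gives an infinite family of orientable pseudo-Anosov mapping classes on surfaces of growing genus.

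Next I would read off the convergence of normalized dilatations directly from item~(3): since any subsequence of a convergent sequence inherits the limit, the genus-normalized dilatations of $(S_{2k},\phi_{2k})$ tend to $\gamma_0^2$. The only mild subtlety is the piecewise genus formula in item~(1), where $g = n$ unless $3 \mid n$, in which case $g = n-1$; but this is already folded into item~(3), and if one wants a direct sanity check it follows from $\lambda(\phi_n) \to 1$ that $\lambda(\phi_n)^{n-1} = \lambda(\phi_n)^n / \lambda(\phi_n)$ has the same limit as $\lambda(\phi_n)^n$.

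Finally, to upgrade \emph{limit} to \emph{accumulation point} in the topological sense, I would use item~(2) together with the remark in the introduction that the sequence $\lambda_n^n$, where $\lambda_n$ is the largest real root of $LT_n(x)$, is strictly monotone decreasing to $\gamma_0^2$. Strict monotonicity shows that the values along the even subsequence are pairwise distinct and strictly greater than $\gamma_0^2$, so every open neighborhood of $\gamma_0^2$ contains infinitely many such dilatations while missing $\gamma_0^2$ itself. The corollary is essentially a bookkeeping consequence of Theorem~\ref{example-thm}; the real work, and hence the main obstacle for the paper as a whole, lies in constructing the sequence $(S_n,\phi_n)$ and identifying $\lambda(\phi_n)$ with the largest root of $LT_n(x)$, not in deducing the corollary itself.
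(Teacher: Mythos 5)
Your deduction is correct and matches what the paper intends: the corollary is stated with no separate proof precisely because it is the immediate bookkeeping consequence of Theorem~\ref{example-thm}, items (3) and (4), exactly as you spell out. Your added observation that strict monotonicity of $|LT_n|^n$ forces the even-index values to be distinct and strictly larger than $\gamma_0^2$ (so that $\gamma_0^2$ is a genuine accumulation point, not merely a subsequential limit that might be attained) is a reasonable piece of care that the paper leaves implicit.
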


\noindent
Sequences satisfying properties (1)--(5) were also found
in \cite{Hironaka:LT} as mapping classes associated 
to a convergent sequence on a fibered face.  The difference in this paper
is that our description is constructive.

\subsection{Organization}
Thurston's fibered face theory \cite{Thurston:norm}, Fried's results about cross-sections of
   pseudo-Anosov flows \cite{Fried82}, 
McMullen's theory of Teichm\"uller polynomials \cite{McMullen:Poly} and the universal finiteness theorem of
Farb, Leininger and Margalit \cite{FLM09}  together imply that the
problem of finding minimum dilatations reduces to understanding the roots of  families of polynomials arising as specializations
of a finite list of multivariable polynomials.   We recall these results in Section~\ref{fibered-section}.  
In Section~\ref{Perron-section} we describe the restriction of Lehmer's problem to Perron units, and its
recent partial solution by McMullen \cite{McM:Clique}.
The special case of orientable pseudo-Anosov
 mapping classes, and the Lanneau-Thiffeault question 
 is discussed in Section~\ref{orientable-section}.
In Section~\ref{traintracks-section} we define 
fat train track maps, and their automata.  We also explain how to compute 
Both the Teichm\"uller and Alexander polynomials in this context.
In Section~\ref{example-section}, we describe a sequence of fat train track maps 
whose Teichm\"uller polynomial specializes to the LT polynomials, and prove Theorem~\ref{example-thm}.

\section{Fibered faces, dilatations and polynomials}\label{fibered-section}
Fibered face theory gives a natural way to partition the set of pseudo-Anosov mapping classes
into families that are in one-to-one correspondence with rational points on convex Euclidean
polyhedra (possibly single points).   Each family contains mapping classes
defined on different surfaces, but having related dynamics.   In particular, the normalized dilatation
varies continuously with respect to the induced Euclidean metric.  Furthermore, each set
has an associated Teichm\"uller  polynomial, whose specialization at each point in the set 
determines the dilatation of the associated mapping class.

\subsection{Fibered face theory}
In \cite{Thurston:norm}, Thurston defines a norm $|| \ ||$ on $H^1(M;\R)$ as follows.
Given a surface $(S,\partial S) \subset (M, \partial M)$, let 
$$
\chi_-(S) = \sum_{S' \subset S} \max\{-\chi(S'), 0\},
$$ 
where the sum is taken over connected components $S'$ of $S$.
Given $\alpha \in H^1(M;\Z)$, let 
$$
||\alpha|| = \min \{\chi_-(S) \  : \ \mbox{$(S,\partial S) \subset (M,\partial M)$ is Poincar\'e dual to $\alpha$}\}.
$$
Then $||\ ||$ extends to a unique norm on $H^1(M;\R)$.
Furthemore, the unit norm ball is a convex polyhedron, and the convex hull of rational vertices.
The norm $|| \ ||$ is called the {\it Thurston norm}, and the unit ball is called the {\it Thurston norm ball}.

An element of $H^1(M;\Z)$ is called {\it fibered} if it is dual to the fiber of  a fibration $\psi_\alpha : M \rightarrow S^1$
over the circle.

\begin{theorem}[Thurston \cite{Thurston:norm}]
For every open  top-dimensional face $F$ of the unit Thurston norm ball,  either every integral point
in the cone $F \cdot \R^+$ over $F$ is fibered, or none of them are.  
\end{theorem}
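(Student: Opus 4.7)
The plan is to reduce the theorem to two claims about the set $\mathcal{O} \subset H^1(M;\R)$ of real classes represented by nowhere-vanishing closed 1-forms: that $\mathcal{O}$ is open, and that the Thurston norm is linear on each connected component of $\mathcal{O}$. By Tischler's theorem, an integral class is fibered precisely when it lies in $\mathcal{O}$. Granting the two claims, a connected component of $\mathcal{O}$ is a convex open cone on which $\|\cdot\|$ is linear, and any such cone must be contained in the open cone over a single top-dimensional face $F$ of the unit norm ball. Openness ensures that if one integer class in $F \cdot \R^+$ is fibered, a neighborhood of it lies in $\mathcal{O}$; linearity of the norm together with convexity then forces the entire open cone $F \cdot \R^+$ into $\mathcal{O}$.

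To prove openness, I would fix a finite-dimensional linear section $\alpha \mapsto \omega_\alpha$ from $H^1(M;\R)$ to closed 1-forms. If $\alpha \in \mathcal{O}$ is represented by the non-vanishing form $\omega_0 = \omega_\alpha + df$, then for $\beta$ near $\alpha$ the form $\omega_\beta + df$ is $C^0$-close to $\omega_0$ on the compact manifold $M$ and is therefore still non-vanishing, so $\beta \in \mathcal{O}$.

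For norm-linearity, I would first establish that the fiber $F_\alpha$ of the fibration determined by $\omega_0$ realizes $\|\alpha\| = -\chi(F_\alpha)$. Incompressibility of $F_\alpha$ follows from the product structure on $M$ cut along $F_\alpha$, and norm-minimality then comes from putting any competing surface $\Sigma$ in general position with the fibration, compressing along null-homotopic intersection circles, and checking $\chi_-(\Sigma) \geq -\chi(F_\alpha)$. To upgrade norm-realization to linearity on $\mathcal{O}$, I would express $-\chi(F_\alpha)$ as the pairing $\langle e, \alpha\rangle$ of $\alpha$ with the Euler class of the plane field $\ker \omega_0 \subset TM$; this Euler class is locally constant in $\omega_0$ and hence in $\alpha \in \mathcal{O}$, giving linearity on each component.

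The main obstacle is this norm-realization step together with its Euler-class reformulation: the openness of the nowhere-vanishing condition is immediate from compactness of $M$, but without the identity $\|\alpha\| = \langle e, \alpha\rangle$ on $\mathcal{O}$ one only learns that individual fibered classes have fibered neighborhoods, not that an entire open cone over a face is uniformly fibered.
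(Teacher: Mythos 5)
The paper does not give a proof of this theorem; it is quoted as a cited result of Thurston, so the only question is whether your argument stands on its own. The framework you lay out --- the set $\mathcal{O}$ of real classes represented by nowhere-vanishing closed 1-forms, Tischler's theorem, openness of $\mathcal{O}$ from compactness of $M$, the Euler class $e$ of the plane field $\ker\omega_0$, and the realization $\|\cdot\| = \langle e,\cdot\rangle$ on a connected component $C$ of $\mathcal{O}$ --- is the correct classical route, and your openness argument via a linear section is fine. But the inference you draw at the end is too weak. Linearity of the norm on $C$ together with openness only puts $C$ \emph{inside} $F\cdot\R^+$ for some top-dimensional face $F$; it does not put $F\cdot\R^+$ inside $C$. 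Your phrase ``linearity of the norm together with convexity then forces the entire open cone $F\cdot\R^+$ into $\mathcal{O}$'' asserts the needed inclusion but does not prove it: nothing you have said rules out $C$ being a proper open subcone of $F\cdot\R^+$ (consider a proper open convex subcone of a quadrant, on which the sup norm is still linear). Convexity of $C$ is also unproved --- a connected open subset of a convex cone need not be convex.

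At the end you sense the obstacle but underestimate it: you say that once $\|\alpha\| = \langle e,\alpha\rangle$ is established on $\mathcal{O}$, the rest follows. It does not. That identity confines $C$ to one face cone; it does not exhaust the face cone. The substantial step in Thurston's proof, which your outline omits, is showing that $C$ is also \emph{closed} in $F\cdot\R^+$ --- equivalently, that for every integral $\beta$ with $\|\beta\| = \langle e,\beta\rangle$ one can actually construct a fibration whose fiber is Poincar\'e dual to $\beta$. Thurston does this by directly modifying the given fibration using a norm-minimizing surface in the class $\beta$; Fried's later approach instead characterizes the fibered cone as the set of classes strictly positive on the cone of homology directions of a flow transverse to the original fibration, which is manifestly an open cone cut out by linear inequalities. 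With closedness and openness in hand, connectedness of $F\cdot\R^+$ gives $C = F\cdot\R^+$. Without such a construction, your reasoning shows only that the fibered integral classes in a fibered face cone form a nonempty open subset, not the whole cone.
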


If the integral points on $F \cdot \R^+$ are fibered, we say $F$ is a {\it fibered face} and $F \cdot \R^+$ is a
{\it fibered cone}.

 Circle fibrations of $M$ are in one-to-one correspondence
with mapping classes $(S,\phi)$ with the property that $M$ is the mapping torus of $(S,\phi)$:
$$
M \simeq S \times [0,1]/{(x,1) \sim (\phi(x),0)},
$$
where $S$ is homeomorphic to the {\it fiber} of the fibration.
The mapping class $(S,\phi)$ is called the {\it monodromy} of the fibration.  

A {\it primitive integral element} in $H^1(M;\Z)$ is a point with
relatively prime integral coefficients.  
Given a fibered element $\alpha \in H^1(M;\Z)$, any positive integer multiple $m \alpha$ has the property that
$\psi_{m\alpha}$ is the composition of $\psi_\alpha$ with the $m$-fold cyclic covering of the circle to itself.
If follows that primitive integral elements on fibered cones correspond to fibrations of $M$ over the circle
with connected fibers. 

A key theorem of Thurston that connects the classification of mapping classes and that of fibered 3-manifolds
is the following.

\begin{theorem}[Thurston \cite{Thurston88}]\label{hyperbolicity-thm}
A mapping class is pseudo-Anosov if and only if its mapping torus
is a hyperbolic 3-manifold.  
\end{theorem}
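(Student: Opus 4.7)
The plan is to prove the equivalence by treating each direction separately, leveraging the Nielsen--Thurston trichotomy for the easy direction and Thurston's fibered hyperbolization theorem for the hard direction.

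First I would handle the direction \emph{hyperbolic mapping torus implies pseudo-Anosov}. By the Nielsen--Thurston classification, every mapping class $(S,\phi)$ is either periodic, reducible, or pseudo-Anosov, so it suffices to rule out the first two alternatives under the hypothesis that $M_\phi$ is hyperbolic. If $\phi$ is periodic, then some finite cyclic cover of $M_\phi$ splits as $S \times S^1$, which is Seifert fibered; but a closed hyperbolic $3$-manifold cannot be finitely covered by a Seifert manifold, since its fundamental group is word-hyperbolic and has trivial center, while Seifert manifold groups contain a central $\Z$. If $\phi$ is reducible, preserving an essential multicurve $C$, then the mapping torus of $\phi|_C$ embeds in $M_\phi$ as an essential disjoint union of tori and Klein bottles, contradicting atoroidality of closed hyperbolic $3$-manifolds. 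Hence $\phi$ must be pseudo-Anosov.

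For the harder direction, \emph{pseudo-Anosov implies hyperbolic}, I would first verify the topological prerequisites of the hyperbolization theorem. The manifold $M_\phi$ is irreducible because its universal cover is $\widetilde{S} \times \R$ with $\widetilde S$ contractible; it is Haken with incompressible fiber $S$; and it is atoroidal because any essential torus in a surface bundle can be isotoped to be either horizontal (impossible, since a finite cover of $S$ would have to be a torus, contradicting $\chi(S)<0$) or vertical over an essential $\phi$-invariant curve in $S$ (contradicting the assumption that $\phi$ has no reducing curve). With these properties in hand, the fibered case of Thurston's hyperbolization theorem produces a complete hyperbolic structure on $M_\phi$.

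The main obstacle is of course invoking the fibered hyperbolization theorem itself. Thurston's strategy is to analyze the skinning map $\sigma : \mathrm{Teich}(S) \to \mathrm{Teich}(S)$ associated to the infinite cyclic cover of $M_\phi$ and to seek a fixed point of the composition $\phi_* \circ \sigma$; such a fixed point yields a quasi-Fuchsian representation whose top and bottom conformal structures are identified by $\phi$, which after gluing produces a hyperbolic structure on $M_\phi$. The existence of this fixed point reduces via the Double Limit Theorem to a compactness statement for sequences of quasi-Fuchsian groups, and the pseudo-Anosov hypothesis is what is needed: the distinct filling stable and unstable measured foliations of $\phi$ prevent the relevant iterates from escaping to a single projective measured lamination at the Thurston boundary. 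Since the machinery (Bers slices, the double limit theorem, bounded image of the skinning map) is substantial, in a survey context I would cite \cite{Thurston88} together with expository accounts such as Otal's rather than reproducing the argument.
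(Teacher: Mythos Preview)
The paper does not prove this theorem: it is stated as a cited result of Thurston and used as a black box, with no argument sketched. There is therefore no ``paper's own proof'' to compare your proposal against. Your outline is the standard one and is essentially correct; the one small imprecision is that in the setting of this survey the surfaces may have boundary, so $M_\phi$ may be cusped rather than closed, and you should speak of finite-volume hyperbolic structures and allow essential tori to be boundary-parallel (peripheral) rather than asserting $M_\phi$ is a closed atoroidal manifold.
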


\noindent
It follows that there is a one-to-one correspondence between pseudo-Anosov mapping classes $(S,\phi)$ on  surfaces $S$
and rational points on fibered faces of  hyperbolic 3-manifolds
whose denominator equals $|\chi(S)|$.  

\subsection{Removing singularities}
To study the dynamical properties of a pseudo-Anosov mapping class  it is natural to
remove the singularities of the invariant stable and unstable foliations.  This process
preserves essential information about the surface (e.g., genus) and the
dynamics of the mapping class (e.g., dilatation).   In many cases, this process increases
 the first Betti number of the mapping torus, and hence the dimension of the
 associated fibered face.

\begin{lemma}\label{Betti-lem}  Let $S$ be a compact surface with boundary,
and $\phi$ a pseudo-Anosov map on $S$.  The first Betti number of the mapping torus of $(S,\phi)$ is 
$r + 1$, where $r$ is  the rank of the 
$\phi$-invariant homology $H_1(S,\partial S;\Z)$.
\end{lemma}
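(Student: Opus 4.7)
The plan is to combine the Wang long exact sequence for a mapping torus with Poincar\'e--Lefschetz duality on the fiber.

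Since $M$ is the mapping torus of $(S,\phi)$, it fibers over $S^1$ with fiber $S$ and monodromy $\phi$, and the associated Wang sequence in rational homology gives, in the relevant range,
$$
H_1(S;\mathbb{Q}) \xrightarrow{\phi_*-\mathrm{id}} H_1(S;\mathbb{Q}) \to H_1(M;\mathbb{Q}) \to H_0(S;\mathbb{Q}) \xrightarrow{\phi_*-\mathrm{id}} H_0(S;\mathbb{Q}).
$$
Since $S$ is connected, $\phi_*$ acts as the identity on $H_0(S;\mathbb{Q}) = \mathbb{Q}$, so the rightmost map vanishes and the kernel of the connecting map is $\mathbb{Q}$. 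Because the kernel and cokernel of an endomorphism of a finite-dimensional vector space have the same dimension, this yields
$$
\dim H_1(M;\mathbb{Q}) = \dim H_1(S;\mathbb{Q})^{\phi_*} + 1,
$$
where the superscript denotes the fixed subspace.

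It remains to identify $\dim H_1(S;\mathbb{Q})^{\phi_*}$ with the rank $r$ of the $\phi_*$-fixed part of $H_1(S,\partial S;\mathbb{Z})$. Poincar\'e--Lefschetz duality supplies a $\phi$-equivariant isomorphism $H_1(S,\partial S;\mathbb{Q}) \cong H^1(S;\mathbb{Q})$, and $H^1(S;\mathbb{Q})$ is the $\mathbb{Q}$-linear dual of $H_1(S;\mathbb{Q})$ with $\phi^*$ acting as the transpose of $\phi_*$. Since a linear map and its transpose share all eigenvalues with multiplicity, the eigenvalue-$1$ subspaces have the same dimension: $\dim H_1(S;\mathbb{Q})^{\phi_*} = \dim H_1(S,\partial S;\mathbb{Q})^{\phi_*} = r$. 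Combined with the previous paragraph this gives $\dim H_1(M;\mathbb{Q}) = r + 1$, as claimed.

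The argument is essentially formal. The only point requiring care is verifying that the Poincar\'e--Lefschetz isomorphism is $\phi$-equivariant, which follows because $\phi$ (being an orientation-preserving homeomorphism of a pair) preserves the relative fundamental class of $(S,\partial S)$ used to define the cap-product duality. This is the mild ``obstacle,'' but it is standard; no estimates or case analysis are needed beyond this identification.
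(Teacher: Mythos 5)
Your proof is correct, and it fills in a step for which the paper itself offers no argument at all: the paper's ``proof'' of this lemma is simply the line ``See, for example, [McMullen]''. So there is no in-paper proof to compare against, but your reasoning is the standard one and it goes through.

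Two small remarks. First, the Wang sequence computation is exactly right: with $S$ connected, $\phi_*-\mathrm{id}$ vanishes on $H_0$, and exactness gives $b_1(M) = \dim\mathrm{coker}(\phi_*-\mathrm{id}\mid_{H_1(S;\mathbb{Q})}) + 1$, which equals $\dim\ker(\phi_*-\mathrm{id})+1$ by rank--nullity. (Also worth noting: $H_1(S,\partial S;\mathbb{Z})$ is free, so the rank of its $\phi_*$-fixed subgroup is indeed computed by the dimension of the fixed subspace after tensoring with $\mathbb{Q}$.)

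Second, your phrase ``$\phi$-equivariant isomorphism'' for Poincar\'e--Lefschetz duality is a hair imprecise. The projection formula $\phi_*(\phi^*\alpha\cap[S,\partial S]) = \alpha\cap\phi_*[S,\partial S]$ shows that cap product with the relative fundamental class intertwines $\phi^*$ on $H^1(S;\mathbb{Q})$ with $\phi_*^{-1}$ (not $\phi_*$) on $H_1(S,\partial S;\mathbb{Q})$. This does not affect your conclusion, since $\ker(\phi_*^{-1}-\mathrm{id}) = \ker(\phi_*-\mathrm{id})$, so the fixed-subspace dimensions still agree; combined with $\dim\ker(\phi^*-\mathrm{id}) = \dim\ker(\phi_*-\mathrm{id})$ from the transpose argument, you get $\dim H_1(S;\mathbb{Q})^{\phi_*} = r$ as desired. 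You might tighten the wording there, but the argument is sound.
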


\begin{proof} 
See, for example, \cite{McMullen:Poly}.
\end{proof}

Define the {\it singularities} of a pseudo-Anosov mapping class $(S,\phi)$ to be the set of singularities of the stable and unstable
$\phi$-invariant foliations. The union of singularities on $S$ is a finite set of
points closed under the action of $\phi$.  Let $S^0$ be 
the complement of small neighborhoods of the singular points.
There is a unique pseudo-Anosov mapping class $\phi^0$ defined on
$S^0$ determined up to isotopies that fix the boundary component pointwise.
Correspondingly, there is a well-defined way to define invariant
foliations for $\phi^0$ whose extensions to $S$ are the original invariant foliations
of $\phi$, so that certain leaves terminate at the boundary.  The leaves terminating at a 
boundary component are called prongs, and the degree of the singularity equals the number
of prongs minus 2.

By this construction, 
the dilatations $\lambda(\phi)$ and $\lambda(\phi^0)$ are stretching factors of the same maps
on the same foliations,
and hence are equal.
Furthermore,  $(S,\phi)$ can be recovered from $(S^0,\phi^0)$ by closing off the boundary 
components with disks.

\begin{corollary}  Suppose $(S,\phi)$ is a pseudo-Anosov mapping class such that
the number of orbits of boundary components and the number of orbits of singularities
add up to at least 2.  Then the first Betti number of the mapping torus of $(S^0,\phi^0)$
is greater than or equal to 2, and hence $(S^0,\phi^)$ corresponds to a point on
a fibered face of positive dimension.
\end{corollary}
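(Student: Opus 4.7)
The plan is to apply Lemma~\ref{Betti-lem} to $(S^0,\phi^0)$ and thereby reduce the corollary to showing that the $\phi^0_*$-invariant rank $r$ of $H_1(S^0,\partial S^0;\mathbb{Z})$ is at least $1$. The concluding statement about a fibered face of positive dimension is then immediate from fibered face theory: a fibration of a 3-manifold with $b_1\ge 2$ determines a primitive integral class in the interior of a fibered cone whose underlying face of the Thurston norm ball has dimension $b_1-1\ge 1$.

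First I would identify the boundary components of $S^0$: they consist of the boundary components of $S$ together with one circle around each singularity of the $\phi$-invariant foliations, and $\phi^0$ permutes them according to the actions of $\phi$ on $\partial S$ and on the singular set. Consequently the number $m$ of $\phi^0$-orbits of boundary components of $S^0$ equals the sum appearing in the hypothesis, so $m\ge 2$.

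The core of the argument is a construction of invariant classes by orbit sums. Let $C_1,\ldots,C_k$ denote all boundary components of $S^0$, and consider their classes $[C_i]\in H_1(S^0;\mathbb{Q})$. Since $S^0$ is a compact connected orientable surface with nonempty boundary, capping the boundary circles yields a closed genus-$g$ surface with first Betti number $2g$, while $H_1(S^0;\mathbb{Q})$ itself has dimension $2g+k-1$; comparing these, the classes $[C_1],\ldots,[C_k]$ span a subspace of dimension exactly $k-1$, with sole linear relation $\sum_i[C_i]=0$. In particular $\sum_{C\in O}[C]\ne 0$ for every proper nonempty subset $O\subset\{C_1,\ldots,C_k\}$. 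For each of the $m$ orbits $O$, the orbit sum $x_O=\sum_{C\in O}[C]$ is manifestly $\phi^0_*$-invariant; and because $m\ge 2$ forces each orbit to be both proper and nonempty, each $x_O$ is nonzero. The $m$ classes $\{x_O\}$ satisfy only the relation $\sum_O x_O=0$, so they span a $\phi^0_*$-invariant subspace of $H_1(S^0;\mathbb{Q})$ of dimension $m-1\ge 1$.

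To match the formulation of Lemma~\ref{Betti-lem}, I would convert this inequality to one for $H_1(S^0,\partial S^0;\mathbb{Q})$ via Poincar\'e--Lefschetz duality $H_1(S^0,\partial S^0;\mathbb{Q})\cong H^1(S^0;\mathbb{Q})$, together with the elementary fact that a linear automorphism of a finite-dimensional rational vector space has the same dimension of invariants on the space and on its dual. This yields $r=\dim H_1(S^0,\partial S^0;\mathbb{Q})^{\phi^0_*}\ge m-1\ge 1$, so by the lemma $b_1\ge r+1\ge 2$. The only step I expect to require genuine care is the claim that $[C_1],\ldots,[C_k]$ generate a subspace of dimension precisely $k-1$ in $H_1(S^0;\mathbb{Q})$; everything else is a routine assembly of the preceding lemma together with standard long-exact-sequence and duality tools.
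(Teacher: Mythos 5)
Your proposal is correct and follows essentially the same route as the paper: form the orbit sums of boundary-component classes, observe that having at least two orbits forces a nonzero $\phi^0_*$-invariant class, and conclude $b_1\ge 2$ via the preceding lemma. You are more explicit than the paper about the Poincar\'e--Lefschetz duality step needed to move the invariant class from $H_1(S^0;\mathbb{Q})$ into $H_1(S^0,\partial S^0;\mathbb{Q})$ (the paper states the invariant class lies directly in the relative group, which is an informality), but the underlying idea is identical.
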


\begin{proof}
For any mapping class $\phi$ on a surface with boundary
$S$,  the sum $\gamma$ of loops around the orbits of a boundary component
determines a $\phi^0$-invariant element $[\gamma]$ in $H_1(S^0, \partial S^0;\Z)$.
If there is more than one orbit, then $[\gamma]$ is non-trivial.
The rest follows from Lemma~\ref{Betti-lem}.
\end{proof}

\subsection{Normalized dilatations}
The {\it normalized dilatation} of a pseudo-Anosov mapping class $(S,\phi)$ is defined by
$$
L(S,\phi) = \lambda(\phi)^{|\chi(S)|}.
$$
Given a fibered element $\alpha \in H^1(M;\Z)$ with monodromy $(S_\alpha,\phi_\alpha)$ define
$$
\mathcal H(\alpha) = \log(\lambda(\phi_\alpha)).
$$
When $\alpha$ is an integral element,  $\mathcal H(\alpha)$  is  the {\it topological entropy} of $\phi_\alpha$.

\begin{theorem}[Fried \cite{Fried82}, McMullen \cite{McMullen:Poly}] \label{Fried-thm} The function $\mathcal H(\alpha)$ extends to a 
real analytic, convex function that is homogeneous of degree $-1$ on each fibered cone $F \cdot \R^+$ and goes to infinity toward the
 boundary of the fibered face $F$.
 \end{theorem}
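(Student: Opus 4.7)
The plan is to invoke McMullen's Teichm\"uller polynomial, which attaches to each fibered face $F$ an element $\Theta_F \in \Z[H]$, where $H = H_1(M;\Z)/\mbox{torsion}$, with the defining property that for every primitive integral $\alpha$ in the cone $F \cdot \R^+$ the dilatation $\lambda(\phi_\alpha)$ is the largest real root of the one-variable specialization $\Theta_F^\alpha(u) = \sum_g c_g\, u^{\alpha(g)}$ obtained by sending each group element $g$ to $u^{\alpha(g)}$. This single analytic object replaces the discrete family $\{(S_\alpha,\phi_\alpha)\}$, and each of the four claimed properties of $\mathcal H$ becomes a property of the largest root of a parameterized exponential sum.

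First I extend $\mathcal H$ from rational rays to all of $F \cdot \R^+$ via $\mathcal H(\alpha) = \log \lambda(\alpha)$, where $\lambda(\alpha) > 1$ is the largest real root of the real-exponent sum $\Theta_F^\alpha(u)$; a Perron--Frobenius argument applied to the non-negative transition matrix underlying $\Theta_F$ shows this root is simple and varies continuously in $\alpha$. Homogeneity of degree $-1$ is then immediate, since scaling $\alpha$ by $t > 0$ replaces each exponent $\alpha(g)$ by $t\alpha(g)$, hence $\lambda(t\alpha) = \lambda(\alpha)^{1/t}$ and $\mathcal H(t\alpha) = t^{-1} \mathcal H(\alpha)$. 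Real analyticity follows from the implicit function theorem applied to $P(\alpha,s) = \Theta_F^\alpha(e^s)$, because simplicity of $\lambda(\alpha)$ forces $\partial_s P \ne 0$ at $(\alpha,\mathcal H(\alpha))$. For the blow-up at $\partial F$, I would show that as $\alpha$ approaches a proper face of $F$, the support of $\Theta_F^\alpha$ degenerates so that no root can remain bounded above $1$; combined with homogeneity, this forces $1/\mathcal H$ to extend continuously to zero on $\partial F$, so $\mathcal H \to \infty$.

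The main obstacle is convexity. For this I would follow either Fried's or McMullen's route: represent $-\mathcal H(\alpha)$ as the topological pressure of a parameterized subshift of finite type derived from a Markov section for the pseudo-Anosov suspension flow on $M$, where convexity in the parameter is a classical fact of thermodynamic formalism; alternatively, write $\mathcal H(\alpha)$ as a supremum of affine functions of $\alpha$ via integration of $\alpha$ against flow-invariant transverse measures, and use that a supremum of affine functions is automatically convex. Either argument requires more than formal manipulation of $\Theta_F$, and it is at this step that the dynamical content of the Fried--McMullen theorem genuinely enters.
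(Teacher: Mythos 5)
The paper does not give a proof of this theorem; it cites it to Fried and McMullen, so there is no internal proof to compare your approach against. That said, your outline for homogeneity, real analyticity, and boundary blow-up is on the right track and matches McMullen's Teichm\"uller polynomial argument: rescaling $\alpha$ by $t$ rescales the exponents and so takes the $t$-th root of the specialization's largest zero, and the Perron--Frobenius eigenvalue of the underlying non-negative transition matrix $T(\alpha)$ is simple and depends analytically on $\alpha$.

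The convexity step is where both of your proposed routes break down as stated. If $\mathcal H$ were a supremum of affine functions, it would be convex, but a positive function homogeneous of degree $-1$ cannot be a supremum of affine functions --- the only affine function homogeneous of degree $-1$ is the zero function --- so this cannot be literally what is meant. And if $-\mathcal H$ were a topological pressure (which is a convex functional of its potential), then $-\mathcal H$ would be convex and $\mathcal H$ concave, which is the \emph{opposite} of the theorem's assertion. Fried's actual argument goes through Abramov's formula and the variational principle: writing $\psi$ for the pseudo-Anosov suspension flow on $M$ and $\mu$ for a $\psi$-invariant probability measure, one has
\[
\mathcal H(\alpha) \;=\; \sup_{\mu} \frac{h_\psi(\mu)}{\langle \alpha, \mu\rangle},
\]
a supremum of functions of the form $c/L(\alpha)$ with $c = h_\psi(\mu) > 0$ a constant and $L(\alpha) = \langle \alpha, \mu\rangle$ a positive linear functional; each such function is convex (not affine), so the supremum is convex. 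Equivalently, $1/\mathcal H(\alpha) = \inf_\mu \langle \alpha,\mu\rangle / h_\psi(\mu)$ is an infimum of positive linear functions, hence concave and homogeneous of degree $1$ --- which is in fact the form in which Fried and McMullen state the result --- and convexity of $\mathcal H$ follows because the reciprocal of a positive concave function is convex. So your instinct that the dynamical content enters precisely at the convexity step is correct, but the mechanism is Abramov's formula, not pressure convexity or a supremum of affine functions.
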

 
Given a primitive integral point $\alpha \in F \cdot \R^+$, let $\overline \alpha = \alpha/q$ be its projection onto $F$.

\begin{corollary}\label{Fried-cor} The function on the rational points of a fibered face $F$ that sends $\overline \alpha$ to $L(S_\alpha,\phi_\alpha)$
extends to a real analytic, strictly convex function on $F$ that goes to infinity toward the boundary of $F$.
\end{corollary}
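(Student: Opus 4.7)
My plan is to reduce the statement to Theorem~\ref{Fried-thm} by identifying the function in question with $\exp\circ\mathcal{H}|_F$. The key input is Thurston's theorem that on a fibered face the Thurston norm equals $|\chi(S_\alpha)|$, so for a primitive integral $\alpha\in F\cdot\R^+$ the divisor $q$ in $\overline\alpha = \alpha/q$ is precisely $\|\alpha\| = |\chi(S_\alpha)|$. Combining this with the degree $-1$ homogeneity of $\mathcal{H}$ gives
\begin{equation*}
L(S_\alpha,\phi_\alpha) \;=\; \lambda(\phi_\alpha)^{|\chi(S_\alpha)|} \;=\; \exp\bigl(\|\alpha\|\,\mathcal{H}(\alpha)\bigr) \;=\; \exp\bigl(\mathcal{H}(\overline\alpha)\bigr),
\end{equation*}
so the function defined on the (dense) rational points of $F$ is literally the restriction of $\exp\circ\mathcal{H}$ to those points.

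With that identification, real analyticity on all of $F$ and the blow-up along $\partial F$ are immediate from Theorem~\ref{Fried-thm}, since $\mathcal{H}$ is real analytic on the open fibered cone and goes to infinity at $\partial F$, and post-composition with $\exp$ preserves both properties. Density of the rational points together with real analyticity makes the extension unique.

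The hard part will be strict convexity, since Theorem~\ref{Fried-thm} only asserts convexity (not strict) of $\mathcal{H}$. My approach is to exploit the fact that $\exp$ is strictly convex and strictly increasing, so $\exp\circ\mathcal{H}|_F$ is strictly convex on $F$ provided $\mathcal{H}|_F$ is not affine on any open segment. If $\mathcal{H}|_F$ were affine on some open segment $\sigma\subset F$, then real analyticity on the connected relative interior of $F$ would force $\mathcal{H}|_F$ to agree with that affine function on the whole relative interior. But an affine function is bounded on the compact closure $\overline{F}$, contradicting the blow-up of $\mathcal{H}$ along $\partial F$. Hence $\mathcal{H}|_F$ is nowhere locally affine, and strict convexity of $L = \exp\circ\mathcal{H}|_F$ follows.
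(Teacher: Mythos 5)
Your identification $L(S_\alpha,\phi_\alpha)=\exp\bigl(\mathcal H(\overline\alpha)\bigr)$ via $\|\alpha\|=|\chi(S_\alpha)|$ on a fibered cone plus degree $-1$ homogeneity is exactly the paper's one-line proof. You go further, though: the paper leaves strict convexity unjustified even though Theorem~\ref{Fried-thm} as stated only gives convexity, and your instinct to fill that gap is good.

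However, there is a genuine slip in your strict convexity argument. You claim that if $\mathcal H|_F$ is affine on an open segment $\sigma\subset F$, then ``real analyticity on the connected relative interior of $F$ would force $\mathcal H|_F$ to agree with that affine function on the whole relative interior.'' That is false when $F$ has dimension $\ge 2$: the identity theorem for real analytic functions of several variables propagates agreement from an \emph{open} subset, not from a lower-dimensional segment. (Consider $f(x,y)=x+y^2$, which is real analytic, convex, and affine on the segment $y=0$, yet not affine on any 2-dimensional domain.) The argument is salvaged by a one-line fix: restrict to the \emph{line} $\ell$ through $\sigma$ and set $I=\ell\cap F$, a bounded open interval. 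Then $\mathcal H|_I$ is a real analytic function of one variable that equals an affine function on the open subinterval $\sigma$, hence equals it on all of $I$ by the one-variable identity theorem; but an affine function is bounded on the bounded interval $I$, contradicting that $\mathcal H\to\infty$ at the endpoints of $I$, which lie in $\partial F$. So $\mathcal H|_F$ is affine on no segment, i.e.\ it is already strictly convex (for a continuous convex function, equality in the midpoint inequality at a pair $x\ne y$ forces affineness on $[x,y]$), and composing with the increasing convex $\exp$ preserves this. With that repair your route is both correct and strictly more careful than the paper's, which cites only the formula and implicitly relies on strict convexity being part of Fried's original result.
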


\begin{proof}
By homogeneity,  we have 
$$
 \log(L(S_\alpha,\phi_\alpha)) = ||\alpha|| \log (\lambda (\phi_\alpha)) = \mathcal H (\overline \alpha).
$$
\end{proof}

\begin{remark}{\em Strict convexity of $\mathcal H$ and its behavior toward the boundary of $F$ imply that this
function has a unique minimum on $F$. The minimum, however,
does not necessarily occur at a rational point, and hence it may not be realized by the monodromy of
a circle fibration \cite{Sun12}.
}
\end{remark}

\begin{corollary} Any convergent sequence on the interior of a fibered
face that is not eventually constant corresponds to a family of pseudo-Anosov mapping classes with unbounded Euler
characteristic and bounded normalized dilatation. 
\end{corollary}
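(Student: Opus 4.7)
The plan is to combine continuity of the entropy function $\mathcal H$ with a discreteness argument on the Thurston norm. Write the given sequence of rational points on the interior of $F$ as $\{\overline{\alpha}_n\}$ with $\overline{\alpha}_n \to \overline{\alpha} \in F$, and let $\alpha_n$ denote the unique primitive integral point on the ray through $\overline{\alpha}_n$, so that $\overline{\alpha}_n = \alpha_n/||\alpha_n||$ and $(S_{\alpha_n},\phi_{\alpha_n})$ is the associated pseudo-Anosov monodromy.

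For boundedness of the normalized dilatations, I would apply Corollary~\ref{Fried-cor} directly: the identity $\log L(S_{\alpha_n},\phi_{\alpha_n}) = \mathcal H(\overline{\alpha}_n)$, combined with real analyticity (hence continuity) of $\mathcal H$ on the open face $F$ and the fact that $\overline{\alpha}$ lies in $F$ rather than on $\partial F$, gives $L(S_{\alpha_n},\phi_{\alpha_n}) \to \exp(\mathcal H(\overline{\alpha})) < \infty$.

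For unboundedness of $|\chi(S_{\alpha_n})|$, primitivity of $\alpha_n$ guarantees that the fiber $S_{\alpha_n}$ is connected, so $||\alpha_n|| = \chi_-(S_{\alpha_n}) = |\chi(S_{\alpha_n})|$. Fixing an integer basis of $H^1(M;\Z)$, the coordinates of $\overline{\alpha}_n = \alpha_n/||\alpha_n||$ are rationals whose denominators divide $||\alpha_n||$. If $||\alpha_n||$ were bounded by some $N$, the sequence $\{\overline{\alpha}_n\}$ would lie in the discrete set of rational points with denominator dividing $N!$; but a convergent sequence in a discrete subset of Euclidean space is eventually constant, contradicting the hypothesis. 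Hence $||\alpha_n|| \to \infty$, which is exactly unboundedness of the Euler characteristic.

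The main subtlety is bookkeeping rather than any genuine analytic obstacle: one must verify that the correspondence between rational points on $F$ and pseudo-Anosov mapping classes passes through primitive integral representatives, since primitivity is what ensures connected fibers and the identification $||\alpha_n|| = |\chi(S_{\alpha_n})|$. Once this is set up, both conclusions follow at once from the continuity of $\mathcal H$ supplied by Corollary~\ref{Fried-cor} and from the lattice structure of $H^1(M;\Z)$.
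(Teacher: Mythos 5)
Your proof is correct, and it supplies essentially the argument the paper leaves implicit (no proof is given for this corollary in the text; it is simply stated as an immediate consequence of Corollary~\ref{Fried-cor}). The two ingredients you identify are exactly the right ones: continuity of $L$ on the open face $F$ gives boundedness of the normalized dilatations near the interior limit, and the lattice/discreteness argument on denominators gives unboundedness of $||\alpha_n|| = |\chi(S_{\alpha_n})|$ since a convergent non--eventually-constant sequence cannot lie in a discrete subset. Your remark about primitivity ensuring connected fibers, hence the identification of $||\alpha_n||$ with $|\chi(S_{\alpha_n})|$, is the correct bookkeeping point and is consistent with the paper's earlier discussion that primitive integral elements on fibered cones correspond to fibrations with connected fiber.
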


Farb, Leininger and Margalit prove the following partial converse.

\begin{theorem}[Universal Finiteness Theorem \cite{FLM09}]\label{FLM-thm}  Let $\Phi$ be a family of pseudo-Anosov mapping classes
with the property that for some constant $C > 1$,
we have 
$$
L(S,\phi) < C
$$
for all $(S,\phi)$ in $\mathcal F$.
Then there is a finite
set of manifolds $\mathcal M = \{M_1,\dots,M_k\}$ so that the mapping torus $(S^0,\phi^0)$
corresponding to each element of $\Phi$ is an element of $\mathcal M$.
\end{theorem}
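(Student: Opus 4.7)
The plan is a three-step argument combining Thurston's hyperbolization theorem, an entropy-versus-volume estimate for mapping tori, and Jørgensen--Thurston finiteness.

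First, I would pass from $(S,\phi)$ to the fully-punctured representative $(S^0,\phi^0)$ as in Section~2.2; this preserves the dilatation, hence preserves the normalized dilatation bound $L(S,\phi)<C$. By Theorem~\ref{hyperbolicity-thm} the mapping torus $M^0=M_{(S^0,\phi^0)}$ is a complete hyperbolic $3$-manifold, and because $S^0$ has nonempty boundary (small disks around the singular orbits have been removed), $M^0$ is cusped with finite volume, with one cusp for each $\phi$-orbit of singularities or boundary components.

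Second, the key estimate is an inequality of the form
$$
\mathrm{vol}(M^0) \leq K \log L(S,\phi)
$$
for a universal constant $K>0$; equivalently, writing $\log L(S,\phi)=|\chi(S)|\log\lambda(\phi)$, the volume is controlled by $|\chi(S)|$ times the Teichmüller entropy of $\phi$. Under the bound $L(S,\phi)<C$ this gives a uniform upper bound $\mathrm{vol}(M^0)<K\log C$. One way to establish such a bound is to combine Brock's theorem, which says that $\mathrm{vol}(M^0)$ is comparable to the Weil--Petersson translation length of $\phi^0$ on Teichmüller space, with a comparison between Weil--Petersson and Teichmüller translation lengths. Alternatively, one can argue more directly using Thurston's Teichmüller-geodesic model of the hyperbolic structure on $M^0$ and a sweepout whose area is linear in $|\chi(S)|\log\lambda(\phi)$. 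The role of the puncturing in Section~2.2 is precisely to make $M^0$ cusped, so that the relevant geometric invariant is the hyperbolic volume (rather than, say, simplicial volume of a closed fibered manifold, for which the analogous bound is not available).

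Third, I invoke the Jørgensen--Thurston theorem: for any $V>0$ there are only finitely many cusped complete hyperbolic $3$-manifolds of volume at most $V$ (every other hyperbolic $3$-manifold of volume $\leq V$ is obtained from one of these by Dehn filling). Taking $V = K\log C$, list these as $\{M_1,\dots,M_k\}$; by Step~2, every $M^0$ arising from $\Phi$ appears in this list.

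The main obstacle is the volume bound in Step~2. The first and third steps are essentially citations of deep but standard results; bridging the dynamical invariant $\lambda(\phi)$ to the geometric invariant $\mathrm{vol}(M^0)$ in a universal way is the technical heart of the argument, and is where the normalized form $\lambda(\phi)^{|\chi(S)|}$ (rather than $\lambda(\phi)$ alone) enters essentially, since the naive Teichmüller translation length $\log\lambda(\phi)$ by itself does not control hyperbolic volume.
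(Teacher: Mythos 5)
The paper does not prove Theorem~\ref{FLM-thm}; it simply cites \cite{FLM09}, so there is no in-text proof to compare against. Judged on its own terms, your three-step architecture is sound and in fact corresponds to a genuine, by-now-rigorous alternative route to the theorem: fully puncture to get a cusped hyperbolic mapping torus $M^0$, obtain a uniform upper bound on $\mathrm{vol}(M^0)$ in terms of $\log L(S,\phi)$, and then apply J{\o}rgensen--Thurston finiteness for cusped hyperbolic $3$-manifolds of bounded volume. The required volume inequality $\mathrm{vol}(M^0)\le 3\pi\,|\chi(S)|\log\lambda(\phi)=3\pi\log L(S,\phi)$, with a constant not depending on the topological type of $S$, is a theorem of Kojima and McShane (2018), and your second suggested mechanism (sweeping $M^0$ out by fibers carrying the singular flat metric of the Teichm\"uller geodesic, whose normalized area is $|\chi(S)|$, over a transverse interval of length $\log\lambda$) is essentially their argument.

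Two caveats. First, your preferred route in Step~2 --- Brock's theorem plus a Weil--Petersson-to-Teichm\"uller comparison --- does not close the gap as stated: Brock's bi-Lipschitz constants relating $\mathrm{vol}(M_\phi)$ to the Weil--Petersson translation length depend on the topological type of $S$, and Linch/Wolpert-type comparisons of the two metrics also carry $|\chi(S)|$-dependent factors, so the product of the two does not obviously yield a uniform constant. You would need to invoke the Kojima--McShane inequality (or reproduce its sweepout argument) rather than assemble the bound from Brock. Second, this is a genuinely different proof from the one in \cite{FLM09}. Farb, Leininger and Margalit argue combinatorially: after puncturing at the singular orbits, they show that a bounded normalized dilatation forces a Markov partition/train-track structure of uniformly bounded complexity for $\phi^0$, hence an ideal triangulation of $M^0$ with a uniformly bounded number of tetrahedra, and then use the finiteness of $3$-manifolds admitting triangulations of bounded size. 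Their route avoids both hyperbolic-geometric volume estimates and J{\o}rgensen--Thurston, which is relevant because the sharp entropy-versus-volume bound was not available at the time of \cite{FLM09}; your route buys a shorter and more geometrically transparent argument at the cost of citing a later, deeper analytic input.
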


\noindent
It follows that
to study the dynamics of a family of mapping classes with bounded normalized dilatation, it suffices to look
at a finite collection of fibered faces of hyperbolic 3-manifolds.

\subsection{Teichm\"uller polynomials}
In \cite{McMullen:Poly}, McMullen defined, for each fibered hyperbolic 3-manifold $M$, and fibered face $F \subset H^1(M;\R)$,
an element  $\Theta_F \in \Z G$, called the {\it Teichm\"uller polynomial} 
where $\Z G$ is the group ring over $G = H_1(M;\Z)/{\mbox{torsion}}$.  Since $G$ is a free
abelian group, we can identify elements with monomials in the generators of $G$, and think of elements of $\Z G$ as
polynomials in several variables with integer coefficients.  Given an element $\theta \in \Z G$, written
$$
\theta = \sum_{g \in G} a_g g,
$$
and $\alpha \in H^1(M;\Z)$,
the {\it specialization} of $\theta$ at $\alpha$ is defined by
$$
\theta^{(\alpha)} (t) = \sum_{g \in G} a_g t^{\alpha(g)}.
$$

\begin{theorem}[McMullen \cite{McMullen:Poly}]\label{McMullen-thm} Let $F$ be the fibered face of a hyperbolic 3-manifold.  Then for each
integral $\alpha \in F \cdot \R^+$, the dilatation of $(S_\alpha,\phi_\alpha)$ equals the house of the specialization
$$
\lambda(\phi_\alpha) = |\Theta_F^{(\alpha)}|.
$$
\end{theorem}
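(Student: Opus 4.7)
The plan is to realize $\Theta_F$ as the characteristic polynomial of a twisted transition matrix on lifts of a train track carrying the unstable foliation over $F$, and then to show that specializing at $\alpha \in F \cdot \R^+$ recovers the ordinary non-negative integer transition matrix of $\phi_\alpha$ on an invariant train track on $S_\alpha$, whose Perron-Frobenius eigenvalue is $\lambda(\phi_\alpha)$ by definition.

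First I would fix any reference monodromy $(S_{\alpha_0},\phi_{\alpha_0})$ with $\alpha_0 \in F \cdot \R^+$ and an invariant train track $\tau \subset S_{\alpha_0}$ carrying $\sF^u_{\alpha_0}$, with $\phi_{\alpha_0}(\tau)$ carried by $\tau$. Pulling $\tau$ back to the maximal free abelian cover $\widetilde M \to M$, the lifted branches form a finitely generated free $\Z G$-module $E$, and the lifted monodromy induces a $\Z G$-linear endomorphism $\widetilde P : E \to E$ whose matrix entries record how the lift of $\phi_{\alpha_0}$ sends each branch, together with the homological information in $G$. The Teichm\"uller polynomial $\Theta_F$ is then the relevant factor of the characteristic polynomial of $\widetilde P$, viewed as an element of $\Z G$ once the spectral variable is identified with the generator of $G$ dual to the fiber direction.

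The next step is to verify that $\Theta_F$ depends only on $F$ and not on the auxiliary choices. Here Fried's cross-section theory \cite{Fried82} is essential: every $\phi_\alpha$ for $\alpha \in F \cdot \R^+$ is a first-return map of one common pseudo-Anosov flow on $M$, so the ambient unstable foliation and its carrying train track extend coherently across the whole cone, and alternate invariant train tracks for other monodromies are related to $\tau$ by splittings whose effect on $\widetilde P$ is by elementary operations that preserve its characteristic polynomial up to units in $\Z G$.

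Finally, for an arbitrary primitive integral $\alpha \in F \cdot \R^+$, the substitution $g \mapsto t^{\alpha(g)}$ turns $\widetilde P$ into a matrix $P^{(\alpha)}(t)$ with entries in $\Z[t,t^{-1}]$, and quotienting the lifted train track by the deck group of $\widetilde M \to M_\alpha$ (with $M_\alpha$ the cyclic cover of $M$ determined by $\alpha$) recovers a $\phi_\alpha$-invariant train track on $S_\alpha$ carrying $\sF^u_\alpha$. Its non-negative integer transition matrix is obtained from $P^{(\alpha)}(t)$ by the appropriate block specialization, its spectral radius is $\lambda(\phi_\alpha)$, and on the polynomial side the same number appears as the largest real root of $\Theta_F^{(\alpha)}(t)$, which is its house. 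The main obstacle is this last matching step: one has to check that after enough splittings the specialized matrix is Perron-Frobenius and that the peripheral factors discarded in defining $\Theta_F$ carry no roots of absolute value $\geq \lambda(\phi_\alpha)$, both of which reduce to a careful analysis of how the ambient unstable foliation restricts to the cross-section $S_\alpha$.
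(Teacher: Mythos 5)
The survey does not supply a proof of Theorem~\ref{McMullen-thm}; it quotes the result directly from McMullen's paper \cite{McMullen:Poly} with no argument. So the only possible comparison is against McMullen's original proof rather than against a proof in this document. With that caveat, your sketch does follow McMullen's actual strategy: suspend an invariant train track to a branched surface in $M$ carrying the unstable lamination of the pseudo-Anosov flow, lift to the maximal free abelian cover to obtain a finitely presented $\Z G$-module of transversals together with a $\Z G$-linear transition operator, define $\Theta_F$ from a characteristic-polynomial determinant of this operator, and verify that the substitution $g \mapsto t^{\alpha(g)}$ specializes that operator to the ordinary integer transition matrix of a $\phi_\alpha$-invariant train track on the cross-section $S_\alpha$, whose Perron--Frobenius eigenvalue is $\lambda(\phi_\alpha)$.

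Two places where the sketch glosses over the substance. First, independence of $\Theta_F$ from the reference fibration and the choice of carrying train track is not a routine ``elementary operations preserve the characteristic polynomial up to units'' remark. McMullen makes $\Theta_F$ an invariant of the face precisely by attaching the module to the \emph{lamination} carried over the whole cone (via Fried's single-flow picture) rather than to any one train track, and then proving that an efficient carrier (no complementary bigons, no trivial switches) computes that invariant. You invoke Fried, but the reduction from the lamination module to an edge module, and the need for efficiency of the carrier, are not free. Second, your final paragraph correctly identifies the crux but defers it: once specialized, the cross-sectional transition matrix is eventually positive because $\phi_\alpha$ is pseudo-Anosov, hence its spectral radius is a simple real eigenvalue equal to $\lambda(\phi_\alpha)$ and equal to the house of its characteristic polynomial; and the factors you call ``peripheral'' come from switches/vertices and have only roots of unity as roots, so they cannot compete with $\lambda(\phi_\alpha) > 1$. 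That matching step is not auxiliary cleanup --- it \emph{is} the content of the theorem, and a proof would have to carry it out rather than note that it ``reduces to a careful analysis.''
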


Combining the Universal Finiteness Theorem (Theorem~\ref{FLM-thm}) with 
 Penner's result on the asymptotic behavior of minimum dilatations given in Equation (\ref{Penner-eqn}),  it follows that 
there are a finite number of fibered faces that contain points corresponding to mapping classes whose closures 
(obtained by filling in punctures) give rise to mapping classes $(S_g,\phi_g)$ realizing $\lambda(\phi_g) = \delta_g$.
Theorem~\ref{McMullen-thm} shows further that there is a finite set of group ring elements  $\Theta_i \in \Z G_i$, $i=1,\dots,k$,
so that the dilatations of these maps equal the house of specializations of these elements.

We now change notation, and think of group rings $\Z G$ as Laurent polynomial rings.  That is, if $G$ has generators
$t_1,\dots,t_k$, then
there is a natural isomorphism of $\Z G$ with the Laurent polynomial ring $\Lambda(t_1,\dots,t_k) = \Z[t_1^{\pm 1},\dots,t_k^{\pm 1}]$,
where each element of $G$ is considered as a monomial in $t_1,\dots,t_k$.   Similarly, there is an isomorphism of
$\Z^k$ with $\mbox{Hom}(G ;\Z)$  where ${\bf m} = (m_1,\dots,m_k)$ corresponds to the map that sends $t_i$ to $t^{m_i}$, where
we think of $t$ as the generator of $\Z$.  By these identifications, the specialization of $p(t_1,\dots,t_k) \in \Lambda(t_1,\dots,t_k)$, 
at   $\bf m$ is defined by
$$
p^{(\bf m)} (t) = p(t^{m_1},\dots,t^{m_k}).
$$

Putting the Universal Finiteness Theorem (Theorem~\ref{FLM-thm}) together with Theorem~\ref{McMullen-thm}, we have the following.

\begin{theorem}[Universal Finiteness Theorem II] For any constant $C$, there is a finite list of  Laurent polynomials
$p_1,\dots,p_r \in \Z[[t_1,\dots,t_k]]$ so that if $(S,\phi)$ satisfies
$L(S,\phi) < C$, then
$$
\lambda(\phi) = |p_i^{(\bf m)}(t)|
$$
for some $i=1,\dots,r$ and $\bf m \in \Z^k$.
\end{theorem}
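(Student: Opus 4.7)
The plan is simply to combine the two inputs just recalled: Theorem~\ref{FLM-thm} produces a finite list of mapping tori that must contain every desingularized mapping torus coming from $(S,\phi)$ with $L(S,\phi)<C$, and Theorem~\ref{McMullen-thm} then produces, from each such torus, a finite list of Teichm\"uller polynomials whose specializations realize the associated dilatations.

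First I would apply Theorem~\ref{FLM-thm} to the family $\Phi = \{(S,\phi) : L(S,\phi)<C\}$, extracting a finite collection $\mathcal M = \{M_1,\dots,M_N\}$ of hyperbolic 3-manifolds such that for every $(S,\phi)\in \Phi$ the mapping torus of the desingularized class $(S^0,\phi^0)$ lies in $\mathcal M$. Since $\lambda(\phi)=\lambda(\phi^0)$ (as noted in the discussion of removing singularities), it is enough to realize the dilatations of the desingularized classes as houses of specializations of a finite list of polynomials.

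For each $M_i$, the Thurston norm ball is a rational convex polytope, so $M_i$ carries only finitely many fibered faces $F_{i,1},\dots,F_{i,n_i}$. Every $(S^0,\phi^0)$ is the monodromy of a circle fibration of some $M_i$, hence corresponds to a primitive integral element in the cone $F_{i,j}\cdot\R^+$ over exactly one of these fibered faces. By Theorem~\ref{McMullen-thm}, each $F_{i,j}$ carries a Teichm\"uller polynomial $\Theta_{i,j}\in \Z G_i$, where $G_i = H_1(M_i;\Z)/\mbox{torsion}$, such that for every primitive integral $\alpha\in F_{i,j}\cdot\R^+$ the dilatation of the associated monodromy equals the house $|\Theta_{i,j}^{(\alpha)}|$. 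Choosing a basis for each $G_i$ identifies $\Theta_{i,j}$ with a Laurent polynomial in $k_i = \mbox{rank}(G_i)$ variables and $\alpha$ with an integer vector in $\Z^{k_i}$.

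Finally, I would set $k = \max_{i,j}k_i$ and regard each $\Theta_{i,j}$ as an element of $\Z[t_1^{\pm 1},\dots,t_k^{\pm 1}]$ via a chosen inclusion of variables, padding each $\alpha$ with zeros to produce a vector ${\bf m}\in \Z^k$; the enumeration of the finitely many $\Theta_{i,j}$ across $i$ and $j$ is then the required list $p_1,\dots,p_r$. The only step requiring any care is this bookkeeping coordination of variable counts across distinct manifolds, and it presents no genuine mathematical obstacle beyond the two input theorems.
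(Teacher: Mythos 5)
Your proposal is correct and follows exactly the route the paper intends: apply the Farb--Leininger--Margalit finiteness theorem to get a finite set of mapping tori for the desingularized classes, use finiteness of fibered faces for each, invoke McMullen's theorem that the Teichm\"uller polynomial of each face computes the dilatation via specialization, and note that desingularization preserves dilatation. The paper gives no further detail than ``putting Theorem~\ref{FLM-thm} together with Theorem~\ref{McMullen-thm},'' so your fleshed-out version, including the harmless bookkeeping of padding to a common number of variables, is a faithful expansion of the intended argument.
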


\subsection{The magic manifold}\label{magic-sec}
All of the known minimum dilatation examples for punctured as well as closed surfaces are associated, after possibly
adding or removing punctures, to points on the fibered face of the magic manifold
(see \cite{KT:magicmanifold} \cite{KKT:magicmanifold}).  This is the 3-cusped 
hyperbolic 3-manifold that is topologically equal to the complement of the link drawn in Figure~\ref{magic-fig}
in the 3-sphere $S^3$.  The name {\it magic manifold} appears also in the context of hyperbolic 3-manifolds which
admit many non-hyperbolic Dehn fillings, and is the 3-cusped hyperbolic 3-manifold with smallest volume \cite{Gordon:MagicManifold}.

\begin{figure}[htbp]
\begin{center}
\includegraphics[height=1in]{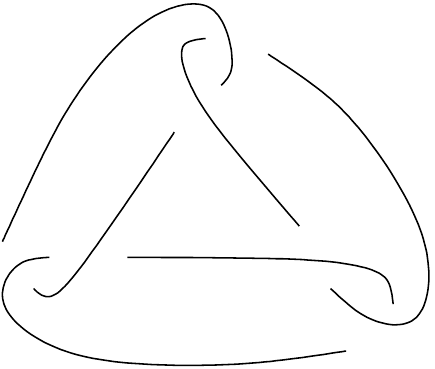}
\caption{Magic Manifold as complement of links in $S^3$.}
\label{magic-fig}
\end{center}
\end{figure}

The first homology group $G = H_1(M;\Z)$ is a free group on 3 generators $x,y,z$ corresponding to 
meridian loops around the component of the link.  The symmetry of the link induces a symmetry on
the Thurston norm.   Let $\hat x, \hat y, \hat z$ be the dual elements.
These form a basis for $H^1(M;\R)$, and $x,y,z$ define coordinate functions on $H^1(M;\R)$.  
With respect to these coordinates, Thurston norm ball is the convex polytope with vertices 
$(\pm 1,0,0), (0,\pm 1,0), (0,0, \pm 1), (\pm 1,\pm 1,\pm 1)$.   Consider the face $F$ defined by
the convex hull of $(1,0,0), (1,1,1), (0,1,0), (0,0,-1)$.   The cone over $F$ can be characterized by
the property 
$$
x + y - z > \max\{x,y,x-z,y-z,0\},
$$
and $F$ is given by
$$
\{(x,y,z)\ : \ x + y - z = 1, x > 0, y > 0, x > z, y> z\}.
$$
We switch to multiplicative notation by replacing $x,y,z$ with $t^x,t^y,t^z$.
Then, the  Teichm\"uller polynomial for $F$ is given by
\begin{eqnarray}\label{MTeich-eqn}
P(t^x,t^y,t^z) &=& t^{x + y - z} - t^x - t^y - t^{x-z} - t^{y-z} + 1.
\end{eqnarray}

\subsection{Dehn Fillings}  Let $M$ be a hyperbolic 3-manifold with cusps.  Each cusp looks topologically like 
$$
S^1 \times S^1 \times (0,\infty),
$$
and we can think of $M$ as the interior of a 3-manifold $M^u$ with torus boundary components. 
A {\it Dehn filling} of $M^u$ at a torus boundary component is the 3-manifold given by attaching a solid torus 
by identifying boundaries.  The filled 3-manifold is determined up to homeomorphism type by the image of the contracting
loop on the surface of the solid torus in $\pi_1(M)$.   This can be specified by a slope
when $M$ is a knot or link complement in $S^3$ as follows. The meridian $\mu$ is the element of
the fundamental group of the torus boundary component that contracts in $S^3$, and the longitude $\gamma$ is
the element whose linking number with the knot in $S^3$ equals zero.  Then  Dehn fillings are
determined by rational numbers $\frac{p}{q}$, where $q \mu + p \gamma$ is the contracting loop.  If the component of the link
is clear, we write the Dehn filling as $M(\frac{b}{a})$.  Thus, for example, if $M$ is the complement of a knot in $S^3$, then
$M(0) = S^3$.  If $M'$ is obtained from the complement $M$ of a link with $k$ components $\ell_1,\dots,\ell_k$ with
meridians $\mu_i$ and longitudes $\gamma_i$, then we write $M'$ as $M' = M(\frac{p_1}{q_1};\dots;\frac{p_k}{q_k})$.

If $M$ has a circle fibration $\psi : M \rightarrow S^1$ with monodromy $(S,\phi)$, then the intersection of $S$ with a cusp
of $M$ determines a Dehn filling $M'$ of $M$ along the cusp.  Let $F$ be the fibered face of $M$ containing the dual element $\alpha_S$
of $S$.  The map $H^1(M';\R) \rightarrow  H^1(M;\R)$ defined by the inclusion $M \hookrightarrow M'$ is one-to-one, since every
loop on $M'$ can be pushed off into $M$.  Let $F'$ be the preimage of $F$  in $H^1(M';\R)$.   Since the map
$H_1(M;\R) \rightarrow H_1(M';\R)$  has kernel generated by the contracting loop of the Dehn filling, we have the following.

\begin{proposition}  If the boundary slope is a finite order element of $H^1(M;\R)$, then the inclusion $F' \hookrightarrow F$
is a bijection.  Otherwise, $F'$ maps to a co-dimension one linear section of $F$.
\end{proposition}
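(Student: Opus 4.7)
The plan is to reduce the proposition to a linear-algebraic statement about the map on cohomology induced by the inclusion $M \hookrightarrow M'$. Write $M' = M^u \cup_T V$, where $V \cong D^2 \times S^1$ is the solid torus glued along the torus boundary component $T$ of $M^u$, so that the contracting loop $c = q\mu + p\gamma$ bounds a meridian disk in $V$. The goal of the topological step is to show that the kernel of the surjection $H_1(M;\R) \twoheadrightarrow H_1(M';\R)$ is exactly the line spanned by $[c]$.

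A Mayer--Vietoris sequence for the decomposition $M' = M^u \cup_T V$ (or equivalently van Kampen on $\pi_1$ followed by abelianization) computes this kernel: the classes in $H_1(T;\R)$ are identified in $M'$ via both inclusions, with the inclusion into $V$ killing the meridian $[c]$ and sending a dual curve to a generator of $H_1(V;\R)$. The only new relation imposed in $H_1(M';\R)$ is thus $[c] = 0$, so $H_1(M';\R) \cong H_1(M;\R)/\R\cdot[c]$. Dualizing yields that the induced injection
$$
H^1(M';\R) \hookrightarrow H^1(M;\R)
$$
has image the annihilator
$$
\mathrm{Ann}([c]) = \{\alpha \in H^1(M;\R) : \alpha([c]) = 0\}.
$$

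From here, the two cases of the proposition are immediate linear algebra. If $[c]$ is torsion in $H_1(M;\Z)$, equivalently zero in $H_1(M;\R)$, then every cohomology class annihilates it, so $H^1(M';\R) \to H^1(M;\R)$ is an isomorphism and $F'$ is carried bijectively onto $F$. Otherwise $[c]$ is a nonzero vector in $H_1(M;\R)$, $\mathrm{Ann}([c])$ is a linear hyperplane of codimension one in $H^1(M;\R)$, and $F' = F \cap \mathrm{Ann}([c])$ is precisely the codimension-one linear section of $F$ cut out by this hyperplane.

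The main obstacle is the Mayer--Vietoris bookkeeping in the second paragraph: one must verify that the kernel is generated by $[c]$ \emph{alone}, with no spurious contribution from, say, the longitude of $V$ being re-identified in $M^u$. Once this topological input is secured, the rest is pure duality together with the observation that $F$ is an open subset of an affine hyperplane and that intersecting it transversely with a linear hyperplane produces a codimension-one section.
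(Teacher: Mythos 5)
Your proof is correct, and it follows the same route the paper tacitly intends. Immediately before the proposition the paper asserts, without proof, that the map $H_1(M;\R) \rightarrow H_1(M';\R)$ has kernel generated by the contracting loop of the Dehn filling; the proposition is then an immediate consequence by duality. Your Mayer--Vietoris computation supplies precisely the verification of that kernel claim (the bookkeeping is right: the only new relation from $V$ is $[c]=0$, since the dual curve on $T$ already generates $H_1(V;\R)$, so gluing in $V$ adds no independent relation), and your dualization step, identifying the image of $H^1(M';\R)$ with the annihilator of $[c]$, is the same linear algebra the paper uses. One small remark you implicitly handled correctly: the hypothesis ``finite order element of $H^1(M;\R)$'' in the statement is evidently a typo for finite order in $H_1(M;\Z)$ (equivalently, $[c]=0$ in $H_1(M;\R)$), which is exactly how you read it; so the case split into ``annihilator is everything'' versus ``annihilator is a hyperplane'' goes through as you wrote.
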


The elements of $F'$ inherit many of  the properties of $F$.

\begin{proposition}\label{F'-prop}  Let $\alpha'$ be a rational element of $F'$, and $\alpha$ its image in $F$.
\begin{enumerate}
\item The boundary slopes defined by the intersection of the dual surface $S_\alpha$ with the cusp are all homologically 
equivalent to that defined by $S$.
\item The intersections $S_\alpha'$ with the filled cusp define  a
 periodic orbit of  $\phi'_\alpha$.  
\item  If the points in the periodic orbit do not come from poles of the quadratic differential on $S$ determined (up to scalar multiple)
by the stable and unstable foliations associated to $\phi_\alpha$,  then  $(S_\alpha,\phi_\alpha)$ is pseudo-Anosov and
$$
\lambda(\phi'_\alpha) = \lambda(\phi_\alpha).
$$
\end{enumerate}
\end{proposition}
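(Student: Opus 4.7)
The plan is to exploit the fact that every $\alpha' \in H^1(M';\R)$ pulls back along the inclusion $\iota : M \hookrightarrow M'$ to a class $\alpha$ that vanishes on the contracting loop $\kappa$ of the Dehn filling, since $\iota_*\kappa = 0$ in $H_1(M';\R)$. This single observation drives all three parts.

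For (1), restrict $\alpha$ to the cusp torus $T$; when $\alpha|_T \neq 0$, the kernel of $\alpha|_T$ in $H_1(T;\Z)$ is spanned by the homology class of the boundary slope of the dual surface $S_\alpha$ along $T$. Since $\alpha$ kills $\kappa$, the slope $\kappa$ lies in this kernel and spans it. Applying the same argument to the original $\alpha_S$ dual to $S$ shows that $\kappa$ is also the slope of $S$, so all boundary slopes along the cusp coincide homologically with that of $S$.

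For (2), realize the filled cusp as a solid torus $V \subset M'$ with core $c$, glued so that its meridian bounds $\kappa$. Put the closed fiber $S'_\alpha$ in general position with respect to $V$: by (1) it meets $\partial V$ in curves homologous to $\kappa$, each of which bounds a meridian disk in $V$, so $S'_\alpha \cap V$ is a disjoint union of meridian disks whose centers lie on $c$. The suspension flow of $\phi'_\alpha$ is transverse to the fibers and carries $c$ to itself, sending meridian disks to meridian disks; hence $\phi'_\alpha$ permutes these centers, yielding a finite periodic orbit.

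For (3), note that $S_\alpha = S'_\alpha \setminus \operatorname{int}(S'_\alpha \cap V)$, and capping off the resulting boundary disks extends $\phi_\alpha$ to $\phi'_\alpha$. The stable and unstable measured foliations for $\phi_\alpha$ come from a quadratic differential $q$ on $S_\alpha$ that extends meromorphically across the punctures; by hypothesis the extension $q'$ has no simple poles at the orbit points, so each filled-in point is either a regular point of $q'$ or a zero, corresponding to a $k$-pronged singularity with $k \geq 2$. Such singularities are admissible for a pseudo-Anosov structure, so the horizontal and vertical foliations of $q'$ define a $\phi'_\alpha$-invariant pseudo-Anosov structure on $S'_\alpha$, and $\phi'_\alpha$ acts on their transverse measures by the same stretching factor as $\phi_\alpha$, yielding $\lambda(\phi'_\alpha) = \lambda(\phi_\alpha)$. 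The main obstacle is this step: one needs the classical fact (see \cite{FLP79} or \cite{CB88}) that a pseudo-Anosov map with a periodic puncture orbit extends across those punctures to a pseudo-Anosov map on the filled surface precisely when the foliations have no $1$-pronged singularities at the filled points. Granting that, the equality of dilatations is immediate because the transverse measures coincide on $S_\alpha$ and the stretching factor is a local invariant away from singularities.
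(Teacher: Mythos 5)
Your proof is correct and follows essentially the same approach as the paper, which dispatches parts (1) and (2) as ``an easy consequence of the definitions'' and gives, for part (3), exactly your key observation: the stable and unstable foliations of $\phi_\alpha$ extend to $S'_\alpha$ across the filled-in points, and remain a pseudo-Anosov structure, precisely when the periodic orbit avoids poles (equivalently $1$-pronged singularities), so $\phi'_\alpha$ stretches the same transverse measures by the same factor. One place to tighten is part (2): general position alone does not force $S'_\alpha \cap V$ to be a union of meridian disks (a properly embedded surface in a solid torus with meridian boundary could a priori be, say, a once-punctured torus), so you should appeal either to incompressibility of the fiber or, more directly, to the fact that the fibration of $M'$ over $S^1$ restricts to the product fibration of $V \cong D^2 \times S^1$ whose disk fibers are exactly $S'_\alpha \cap V$ with centers sweeping out the core $c$ as a closed orbit of the suspension flow.
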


The proof of parts (1) and (2) of Proposition~\ref{F'-prop} is an easy consequence of the definitions.  Part (3) follows from
the fact that the stable and unstable foliations of $(S_\alpha,\phi_\alpha)$ also form stable and unstable foliations for 
$(S_\alpha',\phi_\alpha')$ as long as the periodic orbit does not consist of poles.   

\begin{remark}{\em
In the case of poles, it is possible that $(S_\alpha',\phi_\alpha')$ is not pseudo-Anosov. In this case,
by Theorem~\ref{hyperbolicity-thm}, it follows that the Dehn filling $M'$ is not hyperbolic, and hence
$(S_\alpha',\phi_\alpha')$ is not pseudo-Anosov for all rational $\alpha' \in F'$.   Such a Dehn filling is called
an {\it exceptional Dehn filling}, and it was shown by Thurston that there are only a finite number of boundary slopes with this
property.  }
\end{remark}

Let $\Theta \in \Z G$ be the Teichm\"uller polynomial for $F$ and $\Theta' \in \Z G'$ the Teichm\"uller polynomial for $F'$,
where $G = H_1(M;\Z)/{\mbox{torsion}}$ and $G' = H_1(M';\Z)/{\mbox{torsion}}$.

\begin{proposition}  If no periodic orbit contains poles, then the Teichm\"uller polynomial of $F'$ is 
a factor of
the specialization of the Teichm\"uller polynomial for $F$ defined by the map $i_* : G \rightarrow G'$ induced by the
inclusion $i : M \rightarrow M'$, that is, if 
$$
\Theta = \sum_g a_g g,
$$
then $\Theta'$ divides $\sum_g a_g i_*(g)$.
\end{proposition}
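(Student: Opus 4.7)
My plan is to unravel McMullen's transition-matrix construction of the Teichm\"uller polynomial on each side and show that the one for $M'$ is obtained by specializing the one for $M$ via $i_*$. First I would recall the construction: for $\alpha \in F$ with monodromy $(S_\alpha,\phi_\alpha)$, choose a $\phi_\alpha$-invariant train track $\tau \subset S_\alpha$ carrying the unstable foliation, and let $P_\tau$ be the transition matrix of the induced action on the free $\Z G$-module generated by a lift of the edges of $\tau$ to the universal abelian cover of $M$. Then $\Theta_F$ equals $\det(uI - P_\tau)$ up to units, where $u \in G$ corresponds to the flow direction.

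The second step is to compare with the analogous data for $M'$. By Proposition~\ref{F'-prop}(3), the no-poles hypothesis ensures that the pseudo-Anosov flow on $M$ extends to a pseudo-Anosov flow on $M'$ whose stable and unstable foliations extend those on $M$ by filling in the periodic orbit with regular leaves through a nonsingular point of the quadratic differential. Consequently, the train track $\tau$ (possibly after a controlled local modification near the filled orbit, such as splitting or collapsing a few edges) still carries the unstable foliation of $\phi'_{\alpha'}$ on the fiber $S'_{\alpha'}$. The first-return transition matrix for the extended flow is the \emph{same} combinatorial object as $P_\tau$, except that the $G$-valued homological markings on loops in the cover are now recorded in $G'$ via $i_*$; in other words, the $M'$-transition matrix equals $i_*(P_\tau)$ entrywise.

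Taking determinants then gives
$$\det(uI - i_*(P_\tau)) \;=\; i_*\bigl(\det(uI - P_\tau)\bigr) \;=\; i_*(\Theta_F),$$
and since $\Theta'_{F'}$ is the characteristic polynomial of the transition matrix $Q$ for a genuine, minimally reduced invariant train track on $S'_{\alpha'}$, it must divide $\det(uI - i_*(P_\tau)) = i_*(\Theta_F)$. Any extraneous factor comes from the possible redundancy introduced in $\tau$ by the presence of the periodic orbit at the filled cusp, and in the cleanest case it equals one.

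The main obstacle is justifying the passage from $\tau$ on $M$ to a valid invariant train track on $M'$ without losing or adding essential transition data. Concretely, one must verify that the no-poles hypothesis makes the periodic orbit meet $\tau$ in a way that, after collapsing, produces a genuine invariant train track for $\phi'_{\alpha'}$, and that the resulting transition matrix $Q$ is a polynomial-matrix factor of $i_*(P_\tau)$. This is essentially where McMullen's analysis in \cite{McMullen:Poly} comparing invariant laminations across fibrations must be invoked, together with Theorem~\ref{McMullen-thm} to confirm that $Q$ does produce $\Theta'_{F'}$ and not merely a polynomial with the right house.
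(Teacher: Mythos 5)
The paper states this proposition without proof, so there is no argument in the paper to compare against; your proposal must stand on its own.

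Your outline is structurally the right one, and the determinant identity $\det(uI - i_*(P_\tau)) = i_*(\det(uI - P_\tau))$ is automatic once the surrounding steps are in place, since $i_*$ is a ring homomorphism extended entrywise. But the essential content is deferred to the final paragraph, where you concede it. Three things are asserted rather than proved. First, that the invariant train track $\tau$ for $(S_\alpha,\phi_\alpha)$ remains, after isotopy, a \emph{filling} invariant train track for $(S'_{\alpha'},\phi'_{\alpha'})$ carrying the extended unstable lamination: the no-poles hypothesis from Proposition~\ref{F'-prop}(3) makes this plausible, but ``possibly after a controlled local modification such as splitting or collapsing a few edges'' is not an argument, and if edges are added or collapsed then $P_\tau$ itself changes, not merely its coefficient ring. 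Second, that the $\Z G'$-valued transition matrix is literally $i_*(P_\tau)$ entrywise: this requires tracking how the deck-transformation labels on lifted edge paths transform under $H_1(M)\to H_1(M')$, namely that killing the filling slope in homology does not collapse distinct lifts in a way that changes the entries; that is the actual mathematical content behind the specialization claim, and it is here that the kernel of $i_*$ must be confronted. Third, that $\Theta'_{F'}$ divides the characteristic polynomial of the transition matrix of \emph{any} carrying train track on $S'_{\alpha'}$, not just the one used to define it: this follows from the invariance of the Teichm\"uller polynomial under splitting and folding moves on carrying tracks established in \cite{McMullen:Poly}, but it should be invoked explicitly rather than glossed as ``minimally reduced,'' since a priori two carrying tracks could give characteristic polynomials differing by non-unit factors only after the right module-theoretic normalization. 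As written the proposal is an accurate map of where the proof lives, but it is an outline, not a proof.
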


\begin{remark}{\em
Assuming the case that the periodic orbit does not consist of poles,
the effect of Dehn filling on normalized dilatation is more complicated than for the dilatation itself.
For example, if $\alpha'$ is a rational element of $F'$ and $\alpha$ is its image
in $F$, then 
$$
\chi(S_\alpha) = \chi(S_\alpha') - s_\alpha,
$$
 where $s_\alpha$ is the number of components in the intersection of $S_\alpha$ with the cusp,
and depends on $\alpha$.  Thus, the normalized dilatation function $L$ on $F'$ is not the pull back of the normalized dilatation function
on $F$, and the effect of pull back on the minimizer of normalized dilatation is not obvious.}
\end{remark}

\subsection{Fibered faces of the manifold $M_{\mbox{m}}(\frac{1}{-2})$.}
The minimum dilatation orientable pseudo-Anosov mapping class of genus 8 is the monodromy of  a fibration of $M_s = M_{\mbox{m}}(\frac{1}{-2})$ (see \cite{Hironaka:LT}).  The manifold $M_s$ is homeomorphic
to the complement of the encircled closure of the braid $\sigma_1 \sigma_2^{-1}$, where $\sigma_1$ and $\sigma_2$
are the standard braid generators of the braid group on 3-strands.   This two component link, known as 
$6{}_2^2$ in Rolfsen's knot table \cite{Rolfsen76}, is symmetric in the
two components and can be drawn in two ways (see Figure~\ref{six22link-fig}).

\begin{figure}[htbp]
\begin{center}
\includegraphics[height=1.2in]{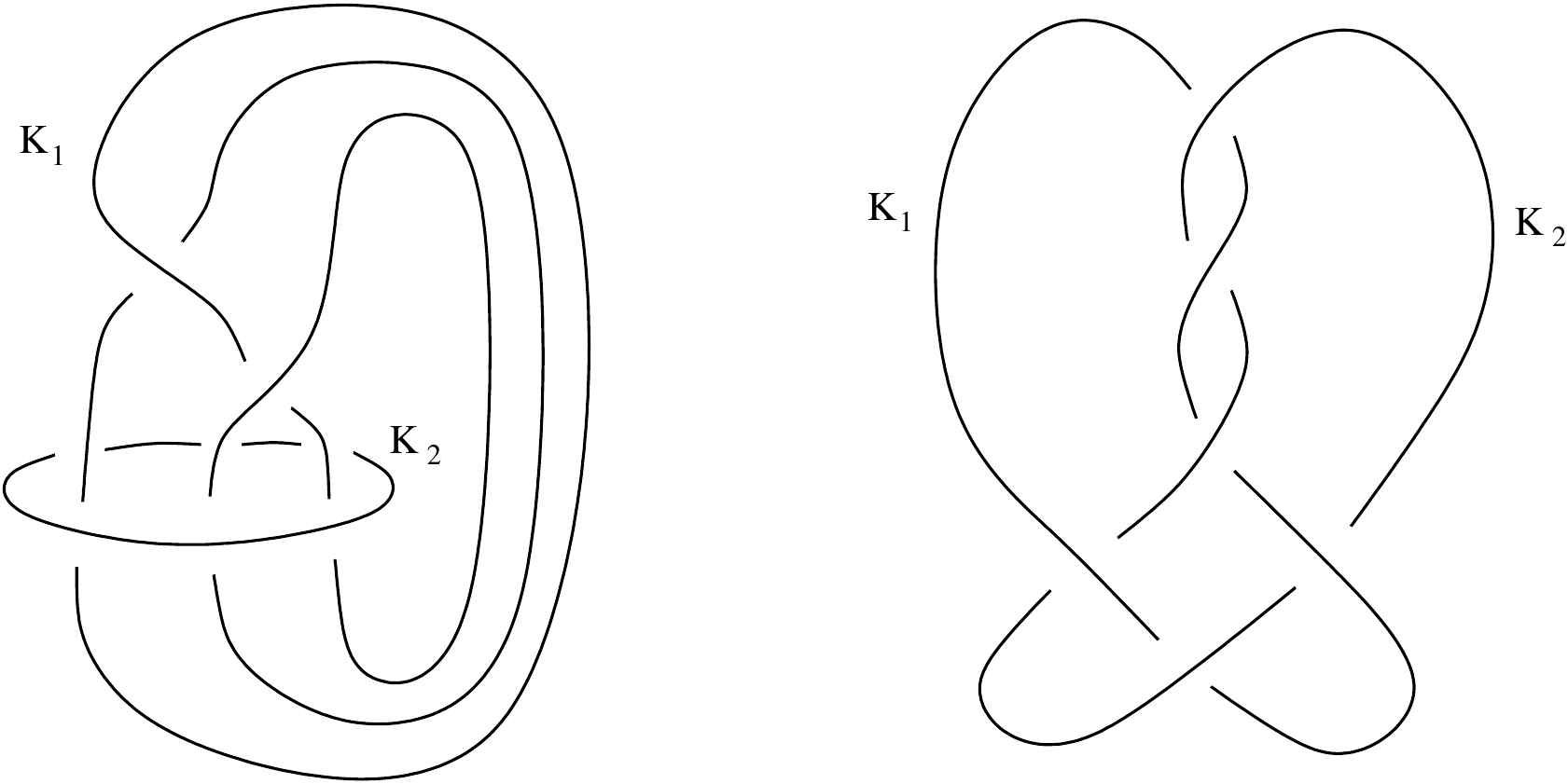}
\caption{Two drawings of the $6{}_2^2$ link.}
\label{six22link-fig}
\end{center}
\end{figure}

Let $M_{\mbox{m}}$ be the magic manifold described in Section~\ref{magic-sec}.
Assume that the Dehn filling is done on the cusp of $M_{\mbox{m}}$ corresponding to the coordinate function $y$.
Then inclusion map $M_{\mbox{m}} \rightarrow M_s$ induces the surjection
$$H_1(M_{\mbox{m}};\R) \rightarrow H_1(M_s;\R)$$
has kernel generated by $t^{y + 2(x +z)}$.   Substituting $x = b$, $z = a$ and $y = -2(b + a)$ in Equation~\ref{MTeich-eqn} gives
\begin{eqnarray*}
P(t^a,t^b) &=& t^{3b + a} - t^{2b + 2a} - t^b - t^ {b-a} - t^{a + 2b} + 1\\
&=& (t^{b + a} + 1)(t^{2b} - t^{b+ a} - t^b - t^{b -a}  + 1).
\end{eqnarray*}

Let $F_{\mbox{m}}$ be the fibered face described in Section~\ref{magic-sec}.
  In \cite{Hironaka:LT}, we show that the fibered face $F_s$ of $M_s$
  corresponding to $F_{\mbox{m}}$ is the locus
  $$
  F_s= \{(x,z) \ : \  x = 1,\  - 1< z< 1\},
  $$
  and the Teichm\"uller polynomial equals
  $$
  \theta_s(t^a,t^b) = t^{2b} - t^{b + a} - t^b - t^{b -a}  + 1.
  $$
 The Alexander polynomial of $M_s$ equals \cite{Rolfsen76}
  $$
  \Delta_s(t^a,t^b) = t^{2b} - t^{b+a} + t^b - t^{b-a} + 1.
  $$
Let $\alpha(a,b)$ denote the element of $H^1(M:\R$ that sends $x$ to $b$ and $z$ to $a$.
If $b$ is even, and $a$ is odd, then
  $$
  |\theta_s(t^a,t^b)| = |\Delta_s(t^a,t^b)|
  $$
and we have the following.
  
  \begin{proposition} \label{orientable-prop} On the fibered face  $F_s$ of $M_s$, the monodromy of 
  $\alpha(a,b)$ is orientable if and only if
  $b$ is even and $a$ is odd, and in particular, it is orientable when $b$ is even and $a = 1$.
  \end{proposition}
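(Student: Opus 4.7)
The plan is to combine McMullen's theorem, the standard Alexander-polynomial criterion for orientability of pseudo-Anosov foliations, and the equality of houses asserted immediately before the proposition. First I recall that the invariant foliations of a pseudo-Anosov $\phi$ on a surface $S$ are orientable if and only if the dilatation $\lambda(\phi)$ is an eigenvalue of the induced action $\phi_{\ast}$ on $H_1(S;\R)$. For the monodromy $\phi_\alpha$ of a fibered class $\alpha$ on a 3-manifold $M$, the characteristic polynomial of $\phi_{\alpha\ast}$ on $H_1$ of the fiber is, up to cyclotomic factors coming from invariant peripheral loops and the orbits of singularities, the specialization $\Delta_M^{(\alpha)}(t)$ of the multivariable Alexander polynomial of $M$ at $\alpha$.

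Next, Theorem~\ref{McMullen-thm} gives $\lambda(\phi_\alpha)=|\theta_M^{(\alpha)}|$, and McMullen's divisibility $\Delta_M \mid \theta_M$ in $\Z[H_1(M)]$ yields $|\Delta_M^{(\alpha)}|\le|\theta_M^{(\alpha)}|$ in general. Equality of these houses forces $\lambda(\phi_\alpha)$ to appear as a root of $\Delta_M^{(\alpha)}$, hence as an eigenvalue of $\phi_{\alpha\ast}$, which by the first paragraph is exactly the condition that $\phi_\alpha$ be orientable. Applying this to $M=M_s$ on the face $F_s$ gives the criterion: $\phi_\alpha$ is orientable if and only if $|\theta_s^{(\alpha)}|=|\Delta_s^{(\alpha)}|$. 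The display immediately preceding the proposition asserts that this equality holds precisely when $b$ is even and $a$ is odd, so combining the two equivalences yields both directions of the stated condition. The ``in particular'' clause is the substitution $a=1$.

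The main obstacle I anticipate in a careful write-up is the first reduction: one must verify that the cyclotomic correction factors relating $\Delta_s^{(\alpha)}$ to the strict characteristic polynomial of $\phi_{\alpha\ast}$ on $H_1(S_\alpha)$---those introduced by the two cusps of $M_s$ and by the capping-off of the singular orbits needed to produce the closed surface $S_\alpha$---contribute no root of absolute value exceeding $1$, so that the equality of houses $|\theta_s^{(\alpha)}|=|\Delta_s^{(\alpha)}|$ really does encode the eigenvalue criterion. Once this is verified, the proposition is an immediate consequence of the parity computation provided just above its statement.
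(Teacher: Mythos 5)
The paper gives no explicit proof of this proposition; the argument you sketch (translate orientability into the equality $\lambda_{\mathrm{hom}}(\phi_\alpha)=\lambda(\phi_\alpha)$, use Milnor's theorem to read $\lambda_{\mathrm{hom}}$ off the specialized Alexander polynomial, use McMullen's theorem to read $\lambda$ off the specialized Teichm\"uller polynomial, then compare houses) is the intended one, and the skeleton is right. However, there are a few genuine gaps.

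First, the claimed divisibility ``$\Delta_M \mid \Theta_F$ in $\Z[H_1(M)]$'' is not a theorem, and the paper's own formulas refute it: $\theta_s$ and $\Delta_s$, as displayed, have the same Newton span $2b$ and are not unit multiples of one another (they differ only in the sign of one interior coefficient), so one cannot divide the other. The actual result of McMullen relates $\Theta_F$ to the Alexander polynomial \emph{twisted} by a class in $H^1(M;\Z/2)$ measuring the non-orientability of the invariant lamination; in this example the twist is the substitution sending a generator of $G$ to its negative, which is precisely why the two displayed polynomials agree up to sign changes of variables rather than by divisibility. The inequality $|\Delta_s^{(\alpha)}| \le |\theta_s^{(\alpha)}|$ you want is still true, but the correct route is simply $\lambda_{\mathrm{hom}}(\phi_\alpha) \le \lambda(\phi_\alpha)$ (spectral radius on homology is bounded by the dilatation) combined with Milnor's theorem; no polynomial divisibility is needed. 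Relatedly, equality of houses does not ``force $\lambda(\phi_\alpha)$ to appear as a root of $\Delta_s^{(\alpha)}$''; it forces \emph{some} root of modulus $\lambda$ (typically $-\lambda$ after the sign twist), and the criterion you actually need is the equality of moduli $\lambda_{\mathrm{hom}} = \lambda$, not the presence of $\lambda$ itself as an eigenvalue.

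Second, the display in the paper is a one--way implication (``if $b$ is even and $a$ is odd, then the houses agree''), not the biconditional you invoke; to conclude ``only if'' you must also show the houses differ for the other parities, which the paper does not state. Finally, an unflagged consistency issue: with $\Delta_s$ exactly as printed, the specialization at $(a,b)=(1,2)$ is $t^4 - t^3 + t^2 - t + 1 = \Phi_{10}(t)$, which is cyclotomic with house $1$, while $|\theta_s^{(1,2)}| \approx 1.72208$, so the asserted equality of houses fails at the very first (and known orientable) case $g=2$. This indicates a sign error in the printed Alexander polynomial; with the corrected version $\Delta_s(u,v)=\theta_s(-u,v)$, the substitution $t\mapsto -t$ turns $\Delta_s^{(a,b)}$ into $\theta_s^{(a,b)}$ exactly when $a$ is odd and $b$ is even, and the argument closes. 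Your write-up should not take the displayed equality on faith without this repair, since it is the entire content of the proposition.
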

  
The monodromy $(S_{(a,b)},\phi_{(a,b)})$ associated to a rational point on $F_s$ whose
primitive element has coordinates $(a,b)$ has topological Euler characteristic equal to minus the degree
of the Alexander polynomial.  Thus, the genus of $S_{(a,b)}$ is given by
$$
g(a,b) = 1 + b - \frac {s}{2},
$$
where $s$ is the number of punctures of $S_{(a,b)}$.  

Let $K_1$ and $K_2$ be the connected components of the $6{}_2^2$-link, and let $\mu_i$ and $\gamma_i$ be their
meridian and longitude for $i=1,2$.  Then $\mu_1$ and $\mu_2$ generate $H_1(M_s;\Z)$ and
$$
\gamma_1 = 3\mu_2 \qquad \gamma_2 = 3\mu_1.
$$
Take any integral $(a,b) \in F_s \cdot \R^+$, and let $\alpha = \alpha(a,b)$.  Let $B_i$ be the boundary tori 
of tubular neighborhoods of $K_i$ in $M_s$.   For $i=1,2$, let $m_i = \alpha(\mu_i)$ and $\ell_i = \alpha(\gamma_i)$ 
be the images of the meridians and longitudes of $K_i$.  Let 
$$
d_1 = \gcd (a,3b) \qquad \mbox{and} \qquad d_2 = \gcd(3a,b).
$$
 Then $d_i$ is the index of the image of $\pi_1(B_i)$ in $\Z$, and hence is equal to the number of
connected components of $S_{(a,b)} \cap B_i$.

In the particular case where $(a,b) = (1,n)$, we have the following.

\begin{lemma} The number of punctures $s$ of $S_{(1,n)}$ is given by
$$
s = \left \{
\begin{array}{ll}
2 &\qquad\mbox{if $3$ doesn't divide $n$}\\
4 &\qquad\mbox{if $3$ divides $n$}
\end{array}
\right .
$$
\end{lemma}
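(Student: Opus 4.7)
The plan is to compute $s$ as the sum $d_1+d_2$, where $d_i$ counts the connected components of $S_{(1,n)}\cap B_i$, and then specialize the gcd formulas from the preceding paragraph.

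First I would observe that since the fiber surface $S_{(a,b)}$ is obtained from the closed fiber by removing one disk for each circle of $S_{(a,b)}\cap\partial M_s$, and $\partial M_s=B_1\sqcup B_2$, the total number of punctures is
\begin{equation*}
s = \#\bigl(S_{(a,b)}\cap B_1\bigr) + \#\bigl(S_{(a,b)}\cap B_2\bigr) = d_1 + d_2.
\end{equation*}
This identity is exactly what the sentence preceding the lemma supplies, so no additional work is needed to justify it.

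Next I would plug in $(a,b)=(1,n)$ into the formulas $d_1=\gcd(a,3b)$ and $d_2=\gcd(3a,b)$. The first gives $d_1=\gcd(1,3n)=1$ for every $n$. The second gives $d_2=\gcd(3,n)$, which equals $1$ when $3\nmid n$ and equals $3$ when $3\mid n$. Adding these yields $s=2$ in the first case and $s=4$ in the second, which is precisely the content of the lemma.

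There is no serious obstacle here: the entire proof is a one-line specialization of an earlier formula, and the only thing to be careful about is correctly matching the conventions $\gamma_1=3\mu_2$, $\gamma_2=3\mu_1$ inherited from the $6{}_2^2$ link so that the factors of $3$ land on the right coordinate. If anything is worth double-checking, it is that the two boundary tori exhaust $\partial M_s$ and that each circle of intersection with $B_i$ really contributes exactly one puncture (i.e.\ no circle is null-homotopic on $B_i$ and then capped off), both of which follow from $\alpha(1,n)$ lying in the interior of the fibered cone $F_s\cdot\mathbb{R}^+$ so that the restriction of $\alpha$ to each $\pi_1(B_i)$ is nonzero with index $d_i$.
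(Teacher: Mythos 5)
Your proposal is correct and is exactly the one-line specialization the paper leaves implicit: plugging $(a,b)=(1,n)$ into $d_1=\gcd(a,3b)$ and $d_2=\gcd(3a,b)$ to get $d_1=1$, $d_2=\gcd(3,n)$, and $s=d_1+d_2$. The paper states the lemma immediately after introducing the $d_i$ and gives no separate proof, so there is no divergence in approach.
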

 
 \begin{corollary} The monodromies $(S_{1,g},\phi_{1,g})$, where $g = 2,4 \pmod 6$, have the property that
 \begin{enumerate}
 \item $S_{1,g}$ has genus $g$;
 \item $S_{1,g}$ has two singularities of degrees $3g-2$ and $g-2$, respectively;
 \item $(S_{1,g},\phi_{1,g})$ is orientable; and
 \item $\lambda(\phi_{1,g}) = |LT_{1,g}|$.
\end{enumerate}                                                                                                                                                                                                                                                                                                                                                                                                                                                                                                                                                                                                                                                                                                                                                                                                                                                                                                                                                                                                                                                                                                                                                                                                                                                                                                                                                                                                                                                                                                                                                                                                                                                                                                                                                                                                                                                                                                                                                                                                                                                                                                                                                                                                                                                                                                                                                                                                                                                                                                                                                                                                                                                                                                                                                                                                                                                                                                                                                                                                                                                                                                                                                                                                                                                                                                                                                                                                                                                                                                                                                                                                                                                                                                                                                                                                                                                                                                                                                                                                                                                                                                                                                                                                                                                                                                                                                                                                                                                                                                                                                                                                                                                                                                                                                                                                                                                                                                                                                                                                                                                                                                                                                                                                                                                                                                                                                                                                                                                                                                                                                                                                                                                                                                                                                                                                                                                                                                                                                                                                                                                                                                                                                                                                                                                                                                                                                                                                                                                                                                                                                                                                                                                                                                                                                                                                                                                                                                                                                                                                                                                                                                                                                                                                                                                                                                                                                                                                                                                                                          
 \end{corollary}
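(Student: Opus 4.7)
Fix $g \equiv 2$ or $4 \pmod 6$ and set $(a,b) = (1,g)$. The plan is to feed this choice into the formulas assembled in this subsection; three of the four claims follow immediately, and only claim (2) requires real extra work.

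For (1), the preceding lemma gives $s = 2$ since $3 \nmid g$, and the formula $g(a,b) = 1 + b - s/2$ then yields $g(1,g) = 1 + g - 1 = g$. Claim (3) is immediate from Proposition~\ref{orientable-prop}: $b = g$ is even and $a = 1$ is odd. For (4), Theorem~\ref{McMullen-thm} says $\lambda(\phi_{(1,g)}) = |\theta_s^{(\alpha(1,g))}|$; substituting $t^a \mapsto t$ and $t^b \mapsto t^g$ into the recorded formula $\theta_s(t^a, t^b) = t^{2b} - t^{b+a} - t^b - t^{b-a} + 1$ gives $t^{2g} - t^{g+1} - t^g - t^{g-1} + 1 = LT_g(t)$, so $\lambda(\phi_{(1,g)}) = |LT_g|$.

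The real content is (2). Since $d_1 = \gcd(1, 3g) = 1$ and $d_2 = \gcd(3, g) = 1$, the fiber $S_{(1,g)}$ meets each of the two boundary tori $B_1, B_2$ in a single circle, so $\phi_{(1,g)}$ fixes both cusps and, after capping off, the two singularities of the closed surface $S_g$ lie one at each former cusp. Let $p_i$ denote the number of prongs of the invariant foliation at the $i$th puncture; the degree of the corresponding singularity is $p_i - 2$. The Euler--Poincar\'e relation for a pseudo-Anosov foliation forces
$$
(p_1 - 2) + (p_2 - 2) = -2\chi(S_g) = 4g - 4,
$$
so $p_1 + p_2 = 4g$, and orientability (part (3)) ensures each $p_i$ is even. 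To split this sum I would compute each $p_i$ directly from the first-return map of $\phi_{(1,g)}$ on a meridian disk at cusp $i$: the boundary slope of $S_{(1,g)} \cap B_i$ in the coordinates $(\mu_i, \gamma_i)$ is read off from $\alpha(1,g)$ together with the longitude relations $\gamma_1 = 3\mu_2$ and $\gamma_2 = 3\mu_1$ recorded above, and the prong count at the cusp is determined by the rotation of the induced return map on the transverse measured foliation (as worked out in \cite{Hironaka:LT}). Carrying this computation out for $(a,b) = (1,g)$ is expected to produce $\{p_1, p_2\} = \{3g, g\}$, giving singularity degrees $3g - 2$ and $g - 2$.

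The main obstacle is precisely this prong-count step: claims (1), (3), (4) are one-line substitutions into previously derived formulas, whereas (2) requires an honest combinatorial analysis of the invariant foliation near each cusp, or equivalently a computation inside the train-track model constructed in Section~\ref{traintracks-section}.
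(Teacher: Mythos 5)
Parts (1), (3), and (4) are correct and are exactly the one-line substitutions the paper intends: the puncture count $s=2$ plus $g(a,b)=1+b-s/2$ gives the genus, Proposition~\ref{orientable-prop} gives orientability from the parity of $(a,b)=(1,g)$, and the Teichm\"uller specialization $\theta_s(t,t^g)=LT_{1,g}(t)$ gives the dilatation via Theorem~\ref{McMullen-thm}. Your reduction of (2) to identifying the prong counts $p_1, p_2$ at the two capped-off cusps is also the right framing, and the observation that $d_1 = d_2 = 1$ pins each singularity to a single cusp is correct.

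However, (2) is genuinely not proved. The constraints you establish --- $p_1+p_2=4g$ from Euler--Poincar\'e and each $p_i$ even from orientability (with $p_i\geq 4$) --- are satisfied by many splittings, e.g.\ $(4,4g-4)$ or $(2g,2g)$, so the stated conclusion $\{p_1,p_2\}=\{3g,g\}$ does not follow. You acknowledge this and sketch a route via first-return maps at the cusps, reading boundary slopes from $\alpha(1,g)$ and the longitude relations $\gamma_1=3\mu_2$, $\gamma_2=3\mu_1$, but this step is described rather than carried out; ``is expected to produce'' is not an argument. Note also that the paper itself offers no in-line proof of this corollary, and its actual justification for the singularity data comes from an entirely different direction: the explicit fat train-track construction in Section~\ref{example-section}, where the complementary regions of $\tau_n$ are a central $3n$-gon and (when $3\nmid n$) a single $n$-gon, giving prong counts $3n$ and $n$ directly and hence degrees $3n-2$ and $n-2$. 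Your proposed cusp-by-cusp slope computation is a plausible alternative route (and is closer in spirit to what is done in \cite{Hironaka:LT}), but until the rotation numbers at each $B_i$ are actually computed and shown to produce $3g$ and $g$ prongs respectively, (2) remains a conjecture consistent with your constraints rather than a theorem.
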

 
 By Fried's theorem (Theorem~\ref{Fried-thm}), the function $L(S,\phi)$ extends to a continuous convex function on $F$ that goes to
 infinity toward the boundary.  Thus, it has a unique minimum in $F_s$.  The Teichm\"uller polynomial is invariant under the involution
 on $H_1(M_s;\R)$ given by sending $z$ to $-z$.  It follows that $\lambda(S,\phi))$ is symmetric around the $z=0$ axis,
 and the minimum of $L$ on $F$ occurs at the rational point $\frac{\alpha(0,1)}{||\alpha(0,1)}$, and is given by
 $$
 \lambda(\phi_{(0,1)}) = |t^3 - 3t + 1| = \frac{3 + \sqrt{5}}{2} = \gamma_0^2.
 $$                                                                                                                                                                                                                                                                                                                                                                                                                                                                                                       
 Thus the conjectural minimum accumulation point for genus normalized dilatations of pseudo-Anosov mapping classes 
 (Conjecture~\ref{goldenmean0-conj}).  
 
 Concretely $(S_{(0,1)},\phi_{(0,1)})$ is the mapping class known as the {\it simplest hyperbolic braid}.  Using the left diagram in
 Figure~\ref{six22link-fig}, consider the three times punctured disk $D$ bounded by the encircling link $K_2$.  Then $D$ is 
 Poincare dual to  $\mu_2$ considered as an element of  $H_1(M_s;\Z)$, and hence is the dual surface to $\alpha(0,1)$.  
 The mondromy is defined by considering $M_s$ as the complement of the braid defined by $K_1$ in a solid torus
 given by the complement of a thickened $K_2$ in $S^3$.  The solid torus fibers uniquely up to isotopy over $S^1$ with
 fiber $D$, and the monodromy is the braid monodromy defined by $K_2$, namely the one defined by $\sigma_1\sigma_2^{-1}$,
 where $\sigma_1$ and $\sigma_2$ are the braid generators.

 The points $\alpha(1,n)$ in $H^1(M_s;\R)$ define rays converging to the ray through $\alpha(0,1)$, and hence
 the sequence $L(S_{(1,n)},\phi_{(1,n)})$ converges to $\rightarrow L(S_{(0,1)},\phi_{(0,1)})$.  Since $\chi(D) = -2$, we have
 $$
 \lambda(\phi_{(1,g)})^{2g} = L(S_{(1,g)},\phi_{(1,g)}) \rightarrow L(S_{(0,1)},\phi_{(0,1)}) = \gamma_0^4.
 $$
 This leads to the more general version of Conjecture~\ref{goldenmean0-conj}.
 
 \begin{conjecture}\label{goldenmean2-conj} The smallest accumulation point for normalized dilatations 
 is $\gamma_0^4$.
 \end{conjecture}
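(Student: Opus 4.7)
The plan is to argue by contradiction via the Universal Finiteness Theorem. Suppose an accumulation point $L^{*} < \gamma_{0}^{4}$ exists for the normalized dilatation function $L(S,\phi) = \lambda(\phi)^{|\chi(S)|}$. Applying the Universal Finiteness Theorem~\ref{FLM-thm} in its polynomial form to any sequence $(S_k,\phi_k)$ with $L(S_k,\phi_k) \to L^{*}$ reduces the problem to analyzing the houses of specializations of finitely many Teichm\"uller polynomials $\Theta_1,\dots,\Theta_r$, living on fibered faces $F_1,\dots,F_r$ of finitely many hyperbolic 3-manifolds. It then suffices to prove that for each $F_i$ the infimum of $L$ along the rational points of $F_i$ is at least $\gamma_{0}^{4}$.

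By Corollary~\ref{Fried-cor}, $L$ extends to a strictly convex real-analytic function on each $F_i$ that blows up toward the boundary, so its infimum is attained at a unique interior point (possibly irrational). Combined with the sharpness evidence from Theorem~\ref{example-thm}, whose sequence $(S_n,\phi_n)$ has $L(S_n,\phi_n) \to \gamma_{0}^{4}$ along the fibered face $F_s$ of $M_s$, one expects equality on at least one face. The natural input for the lower bound is McMullen's inequality~(\ref{McMullen-eqn}), namely $\lambda^{d_{\mbox{\tiny PF}}} \geq \gamma_{0}^{4}$.

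The central obstacle is precisely the gap between $|\chi(S)|$ and $d_{\mbox{\tiny PF}}$, which may be as large as $6g-6$. Since $\lambda > 1$, we have $L = \lambda^{|\chi(S)|} \leq \lambda^{d_{\mbox{\tiny PF}}}$, so McMullen's bound on $d_{\mbox{\tiny PF}}$-normalized dilatations does not directly yield a lower bound on $L$. To close this gap, I would analyze the fat train track maps carrying the invariant foliations on each candidate face and argue that any excess of $d_{\mbox{\tiny PF}}$ over $|\chi(S)|$ corresponds to extra train-track branches whose presence in the Perron--Frobenius matrix, under folding decompositions as in Theorem~\ref{example-thm}, enforces a compensating increase in $\lambda$. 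Making this quantitative, with control that persists in the limit, is the hard part.

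As a secondary and more tractable approach, given that all known small-dilatation mapping classes come from fibered faces of the magic manifold and its Dehn fillings, one can attempt a classification: identify which fibered faces $F_i$ can actually appear in the finite list produced by Universal Finiteness, and verify the $L \geq \gamma_{0}^{4}$ bound face-by-face using explicit Teichm\"uller polynomials such as~(\ref{MTeich-eqn}) and their Dehn-filling specializations. Even a positive answer restricted to the magic manifold and its fillings would constitute strong evidence, and the convexity of $L$ on each face reduces this to finitely many optimization problems in Laurent-polynomial houses.
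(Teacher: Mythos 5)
The statement you are trying to prove is labeled \emph{Conjecture} in the paper, and the paper offers no proof of it --- nor could it, since this is an open problem. What the paper \emph{does} prove is only the ``upper'' half: that $\gamma_0^4$ \emph{is} an accumulation point of normalized dilatations, established constructively via the family $(S_{(1,n)},\phi_{(1,n)})$ on the fibered face $F_s$ of $M_s = M_{\mathrm{m}}(\tfrac{1}{-2})$, whose normalized dilatations converge to $L(S_{(0,1)},\phi_{(0,1)}) = \gamma_0^4$. The ``lower'' half --- that no smaller accumulation point exists --- is exactly what Conjecture~\ref{goldenmean2-conj} asserts and is left open. McMullen's inequality~(\ref{McMullen-eqn}) is cited only as \emph{evidence}, and the paper explicitly explains why it is insufficient: both $d_{\mathrm{alg}}$ and $2g$ (hence $|\chi(S)|$) can be strictly smaller than $d_{\mathrm{PF}}$, which can be as large as $6g-6$.

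Your proposal correctly reconstructs this picture and correctly localizes the obstruction to the same gap between $|\chi(S)|$ and $d_{\mathrm{PF}}$ (your inequality $L = \lambda^{|\chi(S)|} \le \lambda^{d_{\mathrm{PF}}}$ indeed goes the wrong way for a lower bound on $L$). But it does not close that gap. The step where you propose that ``extra train-track branches enforce a compensating increase in $\lambda$'' is aspirational, not an argument: nothing in the paper, nor in your sketch, provides a quantitative relation of this kind, and no such relation is currently known to hold uniformly over the finitely many fibered faces produced by the Universal Finiteness Theorem. Similarly, the ``secondary approach'' of classifying all fibered faces appearing in the finite list is not carried out, and the empirical observation that all known small-dilatation examples lie on fibered faces of the magic manifold and its fillings is not a substitute for such a classification. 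In short, you have written a research plan and an accurate catalogue of the obstacles, but no proof --- which is consistent with the paper, where this remains a conjecture.
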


 The minimum dilatation orientable pseudo-Anosov mapping classes of genus 7 was found independently in
  \cite{AD10} and \cite{KT11} and is the monodromy of $M_w= M_m(\frac{3}{-2})$, which is  the complement
of the $(-2,3,8)$-pretzel link, also known as the Whitehead sister-link in $S^3$.
The minimum dilatations of pseudo-Anosov mapping classes arising as monodromies of circle fibrations of $M_w$
are all of the form
$|LT_{a,b}|$, where $a \in \{3,7,13,17\}$ and $b=g+2$. 
Putting together the examples above, we have the following.

\begin{proposition} For all $g$
$$
\delta_g \leq |LT_{1,g}|, 
$$
and hence
$$
\limsup (\delta_g)^g \leq \gamma_0^2
$$
and 
$$
\limsup L(S,\phi) \leq \gamma_0^4.
$$
\end{proposition}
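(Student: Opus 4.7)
The plan is to establish the base inequality $\delta_g \le |LT_{1,g}|$ by exhibiting explicit mapping classes of the claimed dilatation on closed genus-$g$ surfaces, and then derive both $\limsup$ inequalities from the elementary asymptotic estimate on the roots of $LT_n(x)$ that is already invoked in the introduction.

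First, for each $g$ with $3 \nmid g$, the corollary preceding this proposition directly supplies $(S_{(1,g)}, \phi_{(1,g)})$ as a pseudo-Anosov mapping class on a closed orientable surface of genus $g$ with dilatation exactly $|LT_{1,g}|$, yielding $\delta_g \le |LT_{1,g}|$ on this subsequence. For $g$ with $3 \mid g$, I would look for an analogous construction on the fibered face $F_s$ of $M_s$: choose an integer ray $\alpha(a,b) \in F_s \cdot \R^+$ whose monodromy, after capping off the punctures, lands on a closed genus-$g$ surface, and invoke Proposition~\ref{F'-prop} together with the explicit form of $\theta_s(t^a,t^b)$ to identify the dilatation as a root of $LT_{a,b}$ with absolute value at most $|LT_{1,g}|$. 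Combining these two cases gives $\delta_g \le |LT_{1,g}|$ for all $g$.

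Second, a short calculation, already hinted at in the paragraph following the Lanneau--Thiffeault question, shows that the largest real root $\lambda_n$ of $LT_n(x) = x^{2n} - x^{n+1} - x^n - x^{n-1} + 1$ tends to $1$ as $n \to \infty$ while $\lambda_n^n \searrow \gamma_0^2$ monotonically. Applying the base inequality,
$$
(\delta_g)^g \le \lambda_g^g \longrightarrow \gamma_0^2,
$$
so $\limsup (\delta_g)^g \le \gamma_0^2$. For the normalized dilatation, recall $L(S,\phi) = \lambda(\phi)^{|\chi(S)|}$ and $|\chi(S_g)| = 2g-2$ for a closed genus-$g$ surface, so along the sequence $(S_{(1,g)}, \phi_{(1,g)})$,
$$
L(S_{(1,g)}, \phi_{(1,g)}) = \lambda_g^{2g-2} = \frac{\lambda_g^{2g}}{\lambda_g^2} \longrightarrow \frac{\gamma_0^4}{1} = \gamma_0^4,
$$
giving $\limsup L(S,\phi) \le \gamma_0^4$.

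The main obstacle is the bookkeeping in the first step when $3 \mid g$: the family $(S_n, \phi_n)$ of Theorem~\ref{example-thm} produces a surface of genus $n-1$ rather than $n$ in this case, so one cannot simply take $n = g$. Either a supplementary construction must be produced for these $g$ (for instance a different integer ray on $F_s \cdot \R^+$, or a fibered face of the Whitehead sister manifold $M_w$ discussed immediately before the proposition, where the $LT_{a,b}$ family also arises), or one must be content to interpret $\limsup$ along the subsequence $3 \nmid g$ on which the construction is clean and to supply a separate bound for the remaining residues from the monotone decrease of $|LT_{1,g}|$.
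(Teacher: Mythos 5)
Your structure matches the paper's implicit argument: (i) exhibit, for each $g$, a closed genus-$g$ pseudo-Anosov mapping class with dilatation at most $|LT_{1,g}|$ by realizing it as a monodromy of a circle fibration of $M_s$, and (ii) pass to the limit using $\lambda_g^g \to \gamma_0^2$. Step (ii) in your write-up is correct: once $\delta_g \le |LT_{1,g}| =: \lambda_g$ is known, $(\delta_g)^g \le \lambda_g^g \to \gamma_0^2$, and $L(S_g,\phi_g) = \delta_g^{2g-2} \le \lambda_g^{2g-2} = (\lambda_g^g)^2/\lambda_g^2 \to \gamma_0^4$, which is all that the two $\limsup$ claims require.

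The gap you flag at $3 \mid g$ is genuine, and none of your proposed patches is carried out far enough to close it. Via $(a,b)=(1,n)$ on $F_s$, the surface $S_{(1,n)}$ has genus $n$ when $3 \nmid n$ and genus $n-1$ when $3 \mid n$; this realizes genus $g$ exactly when $g \not\equiv 0 \pmod 3$ (with $n=g$, and additionally with $n=g+1$ when $g \equiv 2 \pmod 3$), but no choice of $n$ yields genus $g$ when $3 \mid g$. Enlarging $a$ changes the puncture count through $d_1=\gcd(a,3b)$, $d_2=\gcd(3a,b)$ and can lower the genus, but it simultaneously increases the dilatation, since $|LT_{a,b}| \ge |LT_{1,b}|$; so this moves the wrong way. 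The Whitehead sister manifold $M_w$ gives dilatations $|LT_{a,g+2}|$ for $a \in \{3,7,13,17\}$, but you would still have to check both that the capped-off genus is exactly $g$ in the residue classes you need and that $|LT_{a,g+2}| \le |LT_{1,g}|$ for the relevant $a$; neither is verified. The paper itself states the proposition with only ``Putting together the examples above'' and never treats the $3 \mid g$ residue separately, so it is not a case of you missing an argument in the source: the argument as presented establishes $\delta_g \le |LT_{1,g}|$ only for $g \not\equiv 0 \pmod 3$, and hence the two $\limsup$ inequalities only along that subsequence of genera. To prove the proposition as literally stated you must either produce an explicit genus-$g$ example with dilatation $\le |LT_{1,g}|$ when $3 \mid g$, or weaken the statement to a $\liminf$ (which is in fact all that Conjectures~\ref{goldenmean0-conj} and~\ref{goldenmean2-conj} need).
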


Let $\lambda_{(a,b)}  = |LT_{(a,b)}|$, and let $\lambda_{(x,y,z)} = |P(t^x,t^y,t^z)|$.
 In Table~\ref{smalldil-table}, we show the smallest known
dilatations for orientable and unconstrained pseudo-Anosov mapping classes on closed
surfaces of genus 2 through 12.   These put together
 the results in \cite{AD10} (Table 1.9), \cite{KT11} (Thm 1.6, 1.7, 1.12, and Prop. 4.3.7), \cite{KKT:magicmanifold} (Table 1) and \cite{Hironaka:LT} (Prop 4.7).
 
 \begin{table}
\begin{center}
\begin{tabular}{| c || c | c |}
\hline
$g$ & orientable  &  unconstrained \\
\hline
2 &$\lambda_{ (1,2)} \approx 1.72208$ &  same \\
\hline
3 & $\lambda_{(3,4)} \approx 1.40127$ & same \\
\hline
4 & $\lambda_{(1,4)} \approx 1.28064 $& $\lambda_{(3,5)} \approx 1.26123$\\
\hline
5 & $\lambda_{(1,6)} \approx  1.17628 $& $\lambda_{(1,7)} \approx 1.14879$ \\
\hline
6 & $\lambda_{(10,8,3)} \approx 1.20189$ & $\lambda_{(1,8)} \approx 1.12876$\\
\hline
7 & $\lambda_{(2,9)} \approx 1.11548$ & same\\
\hline
8 & $\lambda_{(1,8)} \approx 1.12876$ & $\lambda_{(18,17,7)} \approx 1.10403$\\ 
\hline
9 & $\lambda_{(2,11)} \approx 1.09282 $& same \\
\hline
10 & $\lambda_{(1,10)} \approx 1.10149$ &$ \lambda_{(1,12)} \approx 1.08377$\\
\hline
11 & $\lambda_{(1,12)} \approx 1.08377$ & $\lambda_{(1,13)} \approx  1.07705$\\
\hline
12 & $\lambda_{(12,20,3)} \approx 1.10240$ & $\lambda_{(3,14)} \approx 1.07266$\\
\hline
\end{tabular}
\bigskip

\end{center}
\caption{Smallest known dilatations for genus $g  \leq 12$.}
\label{smalldil-table}
\end{table}

\subsection{Dilatations of pseudo-Anosov mapping classes}  We are particularly interested in the subclass
of pseudo-Anosov mapping classes whose stable and unstable foliations are orientable.
This is equivalent to the condition that
the {\it homological dilatation} $\lambda_{\mbox{hom}}(\phi)$,
which is the spectral radius of the action of $\phi$ on the first homology of $S$, is equal
to the geometric dilatation $\lambda(\phi)$.  Such mapping classes are called
{\it orientable}.  Let $\delta_g^+$ be the minimum dilatation for
orientable pseudo-Anosov mapping classes on $S_g$.  By the results in \cite{Penner91}
and \cite{HK:braidbounds}, $\delta_g^+$ has the same asymptotic behavior as $\delta_g$:
$$
\log (\delta_g^+) \asymp \frac{1}{g}.
$$

In the orientable case,  $\delta_g^+$ has been computed for $g = 2,3,4,5,7,8$ beginning with work by Lanneau and
Thiffeault in \cite{LT09} and continuing with 
\cite{Hironaka:LT}, \cite{AD10} \cite{KT11}.
In \cite{LT09} Lanneau and Thiffeault  also gave the first attempt to  describe the behavior of minimum dilatation explicitly 
as a function of $g$.
Given a polynomial $p(t)$, the {\it house} of $p(t)$ is given by
$$
|p| = \max \{|\mu| \ : \ p(\mu) = 0\}.
$$

\begin{question}\label{LTquestion} Let 
$$
p_n(t) = t^{2n} - t^{n+1} - t^n - t^{n-1} + 1.
$$
Then for even genus $g \ge 2$, 
$$
\delta_g^+ = |p_g|.
$$
\end{question}

If the answer to Question~\ref{LTquestion} is affirmative, then
$$
\liminf_{g \rightarrow \infty} (\delta_g^+)^g \leq \gamma_0^2,
$$
where $\gamma_0$ is the golden mean. This suggests the following conjecture (cf. Conjecture~\ref{goldenmean0-conj}).

\begin{conjecture} \label{goldenmean-conj} The genus-normalized minimum dilatations satisfy
$$
\liminf_{g \rightarrow \infty} (\delta_g^+)^g = \gamma_0^2.
$$
\end{conjecture}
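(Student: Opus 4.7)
The plan is to prove $\liminf_{g\to\infty}(\delta_g^+)^g=\gamma_0^2$ by two matching inequalities. The upper bound is immediate from Theorem~\ref{example-thm}: for each even $n\geq 2$, the orientable pseudo-Anosov class $(S_n,\phi_n)$ lives on a closed surface of genus $g\in\{n,n-1\}$ and realizes $\lambda(\phi_n)=|LT_n|$; since $|LT_n|^n$ decreases monotonically to $\gamma_0^2$, this gives $(\delta_g^+)^g\leq|LT_n|^n\to\gamma_0^2$ along the sequence of genera hit by the construction, so that $\liminf_{g\to\infty}(\delta_g^+)^g\leq\gamma_0^2$.

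For the opposite inequality I would argue by contradiction. Suppose a sequence of orientable pseudo-Anosov classes $(S_{g_k},\phi_{g_k})$ with $g_k\to\infty$ satisfies $(\delta_{g_k}^+)^{g_k}\to L^*<\gamma_0^2$. Pass to the puncturings $(S^0_{g_k},\phi^0_{g_k})$; by the Universal Finiteness Theorem (Theorem~\ref{FLM-thm}), after extracting a subsequence all the mapping tori $M^0_{g_k}$ coincide with a single hyperbolic 3-manifold $M$, and the associated primitive fibered classes $\alpha_k\in H^1(M;\Z)$ lie in a common fibered cone over a single face $F\subset H^1(M;\R)$. Strict convexity and boundary blow-up of $\mathcal H$ on $F$ (Theorem~\ref{Fried-thm}, Corollary~\ref{Fried-cor}) then force the projections $\overline{\alpha}_k\in F$ to accumulate at an interior minimizer of $\mathcal H$ restricted to the orientability locus $\Omega_F\subset F$. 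The problem thus reduces to showing, for each $F$ arising in this finite list, that $\inf_{\Omega_F}\exp\mathcal H\geq\gamma_0^4$.

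The locus $\Omega_F$ is cut out algebraically by the condition that the house of the specialization $\Theta_F^{(\alpha)}$ agrees with that of the Alexander polynomial of $M$, so that the homological and geometric dilatations of $\phi_\alpha$ coincide. Proposition~\ref{orientable-prop} carries this identification out on the face $F_s\subset H^1(M_{\mbox{m}}(\frac{1}{-2});\R)$, where the $z\mapsto -z$ symmetry combined with strict convexity pins the minimum of $L$ on $\Omega_{F_s}$ to the simplest hyperbolic braid, giving exactly $\gamma_0^4$. I would extend this template to every other face by using Proposition~\ref{F'-prop} to factor $\Theta_{F'}$ out of a pullback of the magic-manifold polynomial~(\ref{MTeich-eqn}) along the relevant Dehn filling, and then computing the symmetric minimizer face by face.

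The principal obstacle is exactly this last step. Universal Finiteness is non-constructive, and there is no a~priori reason that every hyperbolic 3-manifold supporting small-dilatation orientable monodromies must be a Dehn filling of the magic manifold. The hard piece is therefore a classification result of exactly this kind. A possible alternative that sidesteps classification is to sharpen McMullen's bound~(\ref{McMullen-eqn}) in the orientable setting, by producing for each orientable pseudo-Anosov of genus $g$ a Perron-Frobenius matrix of size linear in $g$ (for example from an oriented invariant train track), so as to upgrade $\lambda^{d_{\mbox{\tiny PF}}}\geq\gamma_0^4$ to $\lambda^{cg}\geq\gamma_0^4$; the case $c=2$ would yield the desired asymptotic bound immediately, while any $c>0$ would at least give a positive $\liminf$ of the right order.
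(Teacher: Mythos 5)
The statement you set out to prove is labeled a \emph{conjecture} in the paper, and the paper does not prove it; neither does your proposal, and to your credit you say so explicitly. Your upper bound is correct and is exactly the evidence the paper adduces: by Theorem~\ref{example-thm}, for each $n\equiv 2,4\pmod 6$ there is an orientable pseudo-Anosov class $(S_n,\phi_n)$ on a closed surface of genus $g=n$ with $\lambda(\phi_n)=|LT_n|$, and since $|LT_n|^n$ decreases to $\gamma_0^2$, one gets $\liminf_{g\to\infty}(\delta_g^+)^g\leq\gamma_0^2$. Your lower-bound scaffolding --- pass to puncturings, invoke the Universal Finiteness Theorem~\ref{FLM-thm} to land in a finite set of fibered faces, use Fried's convexity (Theorem~\ref{Fried-thm}, Corollary~\ref{Fried-cor}) to reduce to a face-by-face minimization over an orientability locus --- is the natural framework and is consistent with the paper's organization in Section~\ref{fibered-section}.

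The gap you name is real, and it is the whole problem: the finite list $\{M_1,\dots,M_k\}$ in Theorem~\ref{FLM-thm} is non-constructive, and there is no theorem that every small-dilatation orientable monodromy fibers a Dehn filling of the magic manifold (the paper's Section~\ref{magic-sec} is an observation about known examples, not a classification). Your alternative route via sharpening McMullen's inequality is likewise open: Theorem~\ref{clique-thm} bounds $\lambda^{d_{\mbox{\tiny PF}}}$, and as the paper notes immediately after Conjecture~\ref{goldenmean0-conj}, both $d_{\mbox{\tiny alg}}$ and $2g$ can be strictly smaller than $d_{\mbox{\tiny PF}}$, which can be as large as $6g-6$; there is no known construction of an oriented invariant train track whose number of branches is bounded by $cg$ with $c=2$ (or any $c$ independent of the orientable map). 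There is also a subsidiary issue you pass over: even granting the finite list and a $\gamma_0^4$ floor for $L$ on each face, converting a bound on $\lambda^{|\chi(S^0)|}$ back to a bound on $\lambda^g$ requires controlling the number of punctures $s$ added in passing to $(S^0,\phi^0)$, since $|\chi(S^0)|=2g-2+s$ and $s$ can grow linearly in $g$; on faces where it does, a $\gamma_0^4$ floor for $L$ yields something strictly weaker than $\gamma_0^2$ for $\lambda^g$. In short, your proposal matches the paper's posture: upper bound proved, lower bound genuinely open.
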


\section{Digraphs and Perron units}\label{Perron-section}  
The dynamics of a pseudo-Anosov mapping class $\phi : S \rightarrow S$, in particular, the structure of the stable and
unstable invariant
foliations,  can be captured in terms of an associated directed graph,
via an associated train track map.   
The train track map defines a Perron-Frobenius linear map $T$ that preserves a symplectic bilinear form, and the
 dilatation of the mapping class equals the 
Perron-Frobenius eigenvalue of $T$. It follows that dilatations are Perron units.
The minimum dilatation problem for pseudo-Anosov mapping classes is closely related in spirit to Lehmer's problem
for Mahler measures of monic integer polynomials posed in \cite{Lehmer33}.
 In this section, we review Lehmer's question on the distribution of algebraic integers, and focus on the particular
case of Perron units.  

\subsection{Mahler measure and Lehmer's question}
Given a monic integer polynomial 
$$
p(t) = t^d + a_{d-1} t^{d-1} + \cdots + a_0, \qquad a_i \in \Z
$$
the {\it Mahler measure} is given by
$$
\mathcal M(p) = \prod_{p(\mu) = 0} \max \{1,|\mu|\}.
$$
In \cite{Lehmer33}, Lehmer asks: {\it is there a positive gap between 1 and the next smallest Mahler measure?}

The smallest known Mahler measure greater than one is called {\it Lehmer's number}
$$
\lambda_L \approx 1.17628,
$$
and its minimal polynomial for $\lambda_L$ is
$$
p_L(t) = t^{10} + t^9 - t^7 - t^6 - t^5 - t^4 - t^3 + t + 1.
$$
By a result of Smyth \cite{Smyth70}, the smallest Mahler measure of a non-reciprocal irreducible polynomial is
approximately $\lambda_S = 1.32472$, which is greater than $\lambda_L$.  Thus to solve Lehmer's problem it suffices to look
at reciprocal polynomials.

\subsection{Normalized house}
The {\it house} of a polynomial is given by
$$
|p| = \max\{|\mu| \ : \ p(\mu) = 0\}.
$$
We have the inequalities
\begin{eqnarray}\label{MahlerHouse-eqn}
|p| \leq \mathcal M(p) \leq |p|^d.
\end{eqnarray}
We call $|p|^d$ the {\it normalized house} of $p(t)$.
It is an open question whether there is a positive gap between 1 and the next smallest normalized house.
If the answer is no, it would imply that there are sequences of Mahler measures converging to 1 from above.

Lehmer's polynomial $p_L$ has only one root outside the unit circle, and hence we
have the first inequality in Equation (\ref{MahlerHouse-eqn}),
$$
|p_L| = \mathcal M(p_L).
$$
The second inequality is also sharp (e.g., take $p(t) = t^n - 2$).

\subsection{Perron numbers}

A {\it Perron-Frobenius matrix} $T$ is an  $n \times n$ matrix whose entries are all non-negative real numbers,
and such that for some $k_0$, the entries of $T^k$ are all positive all $k \ge k_0$.  Given a non-negative
matrix $T = [a_{i,j}]$, one can define an associated directed graph, or {\it digraph}, $D$ with $n$ vertices $v_1,\dots,v_n$ and 
$a_{i,j}$ directed edges from $v_i$ to $v_j$.  By this correspondence $T$ is Perron-Frobenius if and only if $D$ is {\it strongly connected},
i.e., there is a directed path between any two vertices, and {\it aperiodic}, the path lengths of cycles have no common divisor
greater than one \cite{Kitchens98}.  By the Perron-Frobenius theorem, if $T$ is Perron-Frobenius, then 
there is a vector $v$ with positive entries such that $Tv = \lambda v$, for some $\lambda > 1$, and
$\lambda$ is completely determined by these properties.   This $\lambda$ is called
the {\it Perron-Frobenius eigenvalue} of $T$, or {\it dilatation} of $D$.

A {\it Perron number} is a real algebraic integer $\lambda > 1$ such that all algebraic conjugates have complex norm strictly
less than $\lambda$.  An algebraic integer is a Perron number if and only if it is the Perron-Frobenius eigenvalue of a matrix.
Pisot and Salem numbers are examples of Perron numbers.  A Pisot number is an algebraic integer greater than one
all of whose other algebraic conjugates lie strictly inside the unit circle.  A Salem number is an algebraic integer greater than one
all of whose other algebraic conjugates lie
on or inside the unit circle with at least one on the unit circle.   The smallest Pisot number is 
the smallest Mahler measure $\lambda_S$ for non-reciprocal polynomials found by Smyth.  It is not known whether there are Salem numbers arbitrarily close to
1 or whether the infimum of all Mahler measures greater than 1 is a
Salem numbers.   The  smallest known Salem number is Lehmer's number $\lambda_L$.

Graph theory provides an answer to the minimum normalized house problem for Perron numbers and their
defining polynomials.   Recalling the correspondence between Perron-Frobenius matrices and digraphs, one 
notes that the smallest dilatation digraph has the form given in Figure~\ref{mindigraph-fig} (see \cite{Penner91}).
The characteristic polynomial of the digraph is
$$
p_n(t) = t^n - t-1,
$$
for $n \ge 4$.  The polynomial is interesting also in the case $n = 2$, since $|p_2| = \gamma_0$ is the golden mean,
and in the case $n = 3$, since $p_3 = x^3 - x - 1$ is the Smyth polynomial defining
$\lambda_S$.
We also have
$$
\lim_{n \rightarrow \infty} |p_n|^n = 2,
$$
where the convergence is from above.

\begin{figure}[htbp] 
   \centering
   \includegraphics[width=1.2in]{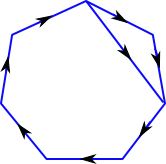} 
   \caption{Minimum dilatation digraph.}
   \label{mindigraph-fig}
\end{figure}

Properties of the normalized house of reciprocal Perron numbers were recently studied in \cite{McM:Clique}, 
showing that any Perron unit  $\alpha$ of degree $n$ satisfies the inequality
$$
\alpha^n \ge \gamma_0^4,
$$
where $\gamma_0$ is the golden mean (see Theorem~\ref{clique-thm}).

\subsection{Complexity of digraphs}
The {\it complexity} $c$ of a digraph
is the number of edges minus the number of vertices of the graph (or minus
the topological Euler characteristic).

\begin{lemma}[Ham-Song \cite{HamSong05}] If $\lambda$ is the spectral radius of $M$,
then $c$ satisfies the inequality
$$
c \leq \lambda^{2n} -1.
$$
\end{lemma}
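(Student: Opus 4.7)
The plan is to prove the equivalent inequality $e-n+1 \le \lambda^{2n}$, where $e$ and $n$ are the numbers of edges and vertices of the digraph $D$ with adjacency matrix $M$. First I would reduce to the strongly connected case, since for a general digraph the spectral radius equals the maximum spectral radius over the strongly connected components, and edges between components can be absorbed into the complexity count with only minor bookkeeping (any purely acyclic part contributes $c \le -1$ per tree component, for which the inequality is trivial).

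Next I would use an \emph{ear decomposition} of the strongly connected digraph $D$: one can build $D$ from a single directed cycle by attaching $e-n$ directed ears, each ear being a directed path whose endpoints lie in the previously built subgraph. Including the initial cycle, this produces a family of $e-n+1 = c+1$ distinguished simple cycles of length at most $n$ in $D$ (the $i$-th ear closes with a directed path in the previously built subgraph to form a new simple cycle). This combinatorial step supplies the linear input of the bound.

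Next I would pass from simple cycles to closed walks in order to bring in the spectral radius. The total number of closed walks of length $k$ in $D$ equals $\mathrm{tr}(M^k)=\sum_i\mu_i^k$, where $\mu_1=\lambda,\mu_2,\dots,\mu_n$ are the eigenvalues of $M$, and since $|\mu_i|\le\lambda$ this is bounded above by $n\lambda^k$. On the other hand, each of the $c+1$ distinguished simple cycles, having length $k\le n$, can be combined with detours through the strongly connected digraph to yield distinct closed walks of length exactly $2n$; the choice $2n$ is large enough that even the shortest relevant concatenation of two simple cycles still fits. Counting these distinct closed walks produces a lower bound $\mathrm{tr}(M^{2n})\ge c+1$, so combining with $\mathrm{tr}(M^{2n})\le n\lambda^{2n}$ yields a bound of the right shape, and a slightly sharper Perron--Frobenius symmetrization (conjugating $M$ by the diagonal of the Perron eigenvector to make row sums equal to $\lambda$) eliminates the factor of $n$.

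The main obstacle is the injectivity in the third step: showing that the $c+1$ simple cycles produced by the ear decomposition give rise to $c+1$ \emph{distinct} closed walks of length $2n$. For a simple cycle of length $k$ this requires choosing a canonical filler path of length $2n-k$ through the strongly connected digraph (which exists since $D$ is strongly connected and $2n-k\ge n$), and then arguing that different simple cycles produce different closed walks because the cycle can be recovered from the walk. Once this distinctness is established, the edge case $\lambda=1$ (where $D$ is a disjoint union of cycles, so $c=0$ and the inequality is $0\le 0$) handles the boundary, and the general case follows.
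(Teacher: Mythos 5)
The paper does not prove this lemma; it is cited to Ham--Song \cite{HamSong05} with no proof reproduced, so there is no in-paper argument to compare against. Evaluating your proposal on its own terms, the decisive step does not close. You establish $\operatorname{tr}(M^{2n}) \ge c+1$ by counting walks and $\operatorname{tr}(M^{2n}) \le n\lambda^{2n}$ from the spectrum, giving $c+1 \le n\lambda^{2n}$, and then assert that conjugating by the Perron eigenvector ``eliminates the factor of $n$.'' It does not. Setting $A = D^{-1}MD$ with $D$ the diagonal of the right Perron eigenvector does make every row sum of $A$ (hence of $A^{2n}$) equal to $\lambda$ (resp.\ $\lambda^{2n}$), so each \emph{individual} diagonal entry of $A^{2n}$ --- and these coincide with the diagonal entries of $M^{2n}$, since diagonal conjugation preserves the diagonal --- is at most $\lambda^{2n}$. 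But the trace still sums $n$ such entries, and the bound you obtain is exactly $\operatorname{tr}(M^{2n}) \le n\lambda^{2n}$, the same as before. To get $c+1 \le \lambda^{2n}$ you would instead have to base all $c+1$ walks at one common vertex $v_0$ and use $(M^{2n})_{v_0,v_0} \le \lambda^{2n}$, but that is incompatible with your construction, which bases each walk at a vertex of the corresponding ear cycle.

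This feeds directly into the injectivity problem you already flag as the ``main obstacle.'' Once you force all walks through a single basepoint $v_0$, they no longer trace the ear cycles from their own vertices but must approach and leave through $v_0$, and ``recovering the cycle from the walk'' becomes genuinely unclear when cycles overlap or the approach path re-enters an ear. A cleaner indexing that sidesteps the ear decomposition is to fix a spanning out-tree $T$ rooted at $v_0$: the $e-(n-1)=c+1$ non-tree edges $a=(u,v)$ each yield a closed walk (tree path $v_0\to u$) $\cdot\, a\,\cdot\,$(return walk $v\to v_0$), and distinct $a$'s give distinct walks because $a$ is recoverable as the \emph{first non-tree edge} encountered. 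Even in that formulation, though, there remains the length-equalization issue you pass over silently: the natural walks have lengths $d(u)+1+d'(v)$ ranging over $[1,2n-1]$, and padding each to length exactly $2n$ requires closed walks at a vertex of every needed short length, which need not exist (e.g.\ in a periodic digraph). So the proposal as written has two real gaps --- the spurious factor of $n$ and the length normalization --- and the injectivity claim would need to be reworked around a single basepoint to have a chance of being made rigorous.
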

\begin{figure}[htbp]

\begin{center}
\includegraphics[height=2in]{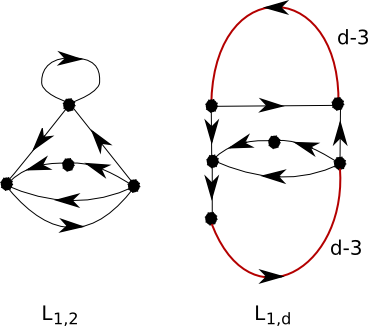}
\caption{Digraphs realizing $LT_{1,n}$.}
\label{digraph-fig}
\end{center}
\end{figure}

Figure~\ref{digraph-fig} shows a family of directed graphs whose characteristic polynomials 
are given by $LT_{1,3}$.   In the Figure, an edge labeled $m$ is subdivided into a chain of $m$ edges
and $m-1$ additional vertices.  Other examples of digraphs with the same dilatation were found in
\cite{Birman:LT}.   The ones shown in Figure~\ref{digraph-fig} have the additional property that they
are defined from the transition matrix of train track maps associated to pseudo-Anosov mapping classes
(see Section~\ref{example-section}).
   
The LT polynomials satisfy
$$
|LT_{1,n}| \leq |LT_{a,n}|
$$
for all $1\leq a<n$, and for any fixed $0<a$,
$$
\lim_{n\rightarrow \infty} |LT_{a,n}|^{2n} = \left (\frac{3 + \sqrt{5}}{2} \right )^2 = \gamma_0^4.
$$
Thus to find the smallest Perron units, it suffices to consider only those with
$\lambda < \lambda_n = |LT_{1,n}|$.  
It follows that to solve the minimum dilatation problem it suffices to look at mapping classes whose
corresponding digraphs have complexity $c \leq 5$.

\subsection{Dilatations of digraphs whose matrices preserve a symplectic form}
It is well-known that any Perron number can be realized as the spectral radius of a Perron Frobenius matrix. 
Furthermore, any Perron unit is the dilatation of a Perron Frobenius matrix that preserves a symplecitc form.
It is not known, however, whether every Perron unit is a dilatation of pseudo-Anosov mapping class.

Given a Perron unit $\lambda$, we define its {\it PF-degree} to be the minimum dimension of a Perron Frobenius matrix
realizing $\lambda$.
McMullen has recently announced the following result giving further support to Conjecture~\ref{goldenmean0-conj}.

\begin{theorem}[McMullen \cite{McM:Clique}]\label{clique-thm} Let $p_{d}$ be the minimum Perron unit of Perron degree 
$d$.  Then
\begin{enumerate}
\item $(p_n)^n \ge \gamma_0^4$ for all $n \ge 1$, and
\item $\lim_{n \rightarrow \infty} (p_n)^n = \gamma_0^4$.
\end{enumerate}
\end{theorem}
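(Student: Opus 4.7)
The plan is to represent $\lambda$ as the Perron--Frobenius eigenvalue of an $n \times n$ non-negative integer matrix $M$ whose underlying digraph $D$ is strongly connected and aperiodic, and then to exploit the Perron-unit condition $|\det M| = 1$ via the clique polynomial identity
\[
\det(I - tM) = \sum_{C} (-1)^{|C|} t^{\|C\|},
\]
where $C$ ranges over \emph{cliques} in $D$, i.e.\ unordered collections of pairwise vertex-disjoint simple directed cycles, with $|C|$ the number of cycles and $\|C\|$ the total vertex count. The smallest positive real root of this polynomial is $1/\lambda$, so part (1) reduces to showing $1/\lambda \le \gamma_0^{-4/n}$.

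For part (1), the heart of the argument is to combine the arithmetic Perron-unit hypothesis with the combinatorics on the right-hand side of the clique identity. First I would record that $|\det M| = 1$ forces the leading coefficient of $\det(I - tM)$ to be $\pm 1$ (matched by the constant term $1$), so the polynomial is essentially palindromic; this rules out the unconstrained minimizer shown in Figure~\ref{mindigraph-fig} with characteristic polynomial $t^n - t - 1$ (for which $(|p_n|)^n \to 2 < \gamma_0^4$), since that polynomial is not palindromic. Next I would show, via a Rouch\'e-type estimate on the clique expansion, that a $D$ meeting the unit condition and minimizing $\lambda$ must either contain a pair of short cycles together with a long cycle of length close to $n$, or else look like one of the explicit extremal families. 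In either case the smallest positive root is at most $\gamma_0^{-4/n}$, with equality approached by polynomials of the form $t^{2m} - t^{m+1} - t^m - t^{m-1} + 1$, i.e.\ the LT polynomials $LT_m$ of degree $n = 2m$.

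For part (2), the matching upper bound $\limsup (p_d)^d \le \gamma_0^4$ is supplied by an explicit family. The LT polynomials $LT_m(t)$ are characteristic polynomials of strongly connected aperiodic digraphs on $2m$ vertices --- for instance, the transition matrices of the fat train tracks constructed in Theorem~\ref{example-thm} --- and their largest roots $\lambda_m$ are Perron units of PF-degree $2m$. An elementary computation of $LT_m$ at $t = \gamma_0^{2/m}$ shows $\lambda_m^{2m} \searrow \gamma_0^4$ monotonically, giving $p_{2m}^{2m} \le \lambda_m^{2m} \to \gamma_0^4$. A minor modification of the same family (or an interpolation between PF-realizations of the $\lambda_m$) handles the odd indices, and combined with part (1) one concludes $\lim_{n \to \infty} (p_n)^n = \gamma_0^4$.

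The main obstacle is the lower bound in part (1). Without the unit hypothesis, sparse digraphs with one long cycle and one short edge push $\lambda^n$ down toward $2$, far below $\gamma_0^4$. Converting the arithmetic rigidity $|\det M| = 1$ into a sharp analytic bound on the smallest positive root of the clique polynomial --- and in particular showing that the golden-ratio-flavoured LT polynomials really are extremal among all admissible clique polynomials --- is the delicate step, requiring a careful case analysis on which cliques can appear in $D$ and how the palindromic constraint cancels the low-degree contributions.
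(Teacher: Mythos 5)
The paper does not supply an argument for Theorem~\ref{clique-thm}: it is quoted directly from McMullen's paper \cite{McM:Clique} (``McMullen has recently announced the following result\dots''), so there is no internal proof to compare your sketch against. Your sketch is aimed at the right technique --- McMullen's clique-polynomial expansion of $\det(I - tM)$ together with a reciprocity constraint --- so it is worth saying precisely where it breaks.

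The gap is the chain ``$\lambda$ is a Perron unit $\Rightarrow$ $|\det M| = 1$ $\Rightarrow$ $\det(I-tM)$ is essentially palindromic.'' Neither implication holds. Being a Perron unit is a property of the minimal polynomial of $\lambda$, not of the particular minimal-dimension Perron--Frobenius matrix $M$ realizing $\lambda$; and even when $|\det M| = 1$, having constant term $1$ and leading coefficient $\pm 1$ does not make $\det(I - tM)$ self-reciprocal. The golden mean itself is the test case: $\gamma_0$ is a Perron unit of Perron--Frobenius degree $2$, realized by the Fibonacci matrix with characteristic polynomial $t^2 - t - 1$, which is \emph{not} palindromic, yet $\gamma_0^2 < \gamma_0^4$. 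This shows your palindromicity step fails, and indeed shows that the inequality in part (1) is false for the unrestricted PF-degree; McMullen's theorem is really a statement about \emph{reciprocal} Perron--Frobenius matrices (equivalently, about the minimal dimension of a PF realization preserving a symplectic form), so reciprocity must enter as a hypothesis, not a deduction, and the relevant degree can be strictly larger than the one you are minimizing over. Once that is corrected, invoking the clique identity is the right move, but ``a Rouch\'e-type estimate'' and ``a careful case analysis'' conceal the entire content of the lower bound, which in \cite{McM:Clique} rests on a structural constraint on families of vertex-disjoint cycles in a reciprocal digraph and a nontrivial extremal argument over clique polynomials. The part (2) upper bound via the $LT$-polynomials and the train-track transition matrices is reasonable in outline, though the clause ``a minor modification handles the odd indices'' also needs an actual construction.
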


\section{Orientable pseudo-Anosov mapping classes} \label{orientable-section}

In \cite{LT09} Lanneau and Thiffeault  studied potential defining polynomials for $\delta_g^+$ 
in the cases $g = 2,\dots,8$, and found lower bounds for $\delta_g^+$ for these $g$.
 Using known examples whose dilatations match these lower bounds they determined $\delta_g^+$ for $g = 2,3,4,5$.   
From the results of Cho and Ham in \cite{CH08}, it follows that $\delta_2 = \delta_2^+$.
Lanneau and Thiffeault's lower bound for $g = 6$ agrees with $\delta_5^+$, showing that $\delta_g^+$ is not
strictly monotone decreasing.  An example realizing $\delta_7^+$ was found in \cite{AD10}
and in \cite{KT11}, and an example realizing  $\delta_8^+$ was found in \cite{Hironaka:LT}.  The exact value for
$\delta_6^+$ is not known.

The minimum dilatations of orientable pseudo-Anosov mapping classes for low genus are given in Table~\ref{mindil-table}.
The associated {\it PF-polynomial} is the  characteristic polynomial of an associated Perron-Frobenius matrix.  This
is not necessarily irreducible.  In  Table~\ref{mindil-table} we repeatedly see the 
cyclotomic factor  $\sigma(t) = t^2 - t + 1$.

\begin{table}[htbp]
\begin{tabular}{|l|l|l|l|}
\hline
g& $\delta_g^+ \approx$ & PF polynomial & factorization\\
\hline
2 & 1.72208 & $t^4-t^3-t^2-t + 1$& irreducible\\
\hline
3 & 1.40127 & $t^{8} - t^7 - t^4 - t+ 1$ & $\sigma(t)(t^6 - t^4 - t^3 - t^2 + 1)$\\
\hline
4 & 1.28064 & $t^8 - t^5-t^4-t^3 + 1$& irreducible\\
\hline
5 & 1.17628 & $t^{12} - t^7 - t^6 - t^5 + 1$ & $\sigma(t)(t^{10} + t^9 - t^7 - t^6 - t^5 - t^4 - t^3 + t + 1)$\\
\hline
7 & 1.11548 & $t^{18} - t^{11} - t^9 - t^7 + 1$ & $\sigma(t)(t^{14} + t^{13} - t^9 - t^8 - t^7 - t^6 - t^5 + t + 1)$\\
\hline
8 & 1.12876 & $t^{16} - t^9 - t^8 - t^7 + 1$& irreducible\\
\hline
\end{tabular}
\bigskip
\caption{List of minimum dilatations and their PF polynomials.}\label{mindil-table}
\end{table}

For $a,b \in \Z$, define the {\it Lanneau-Thiffeault polynomial} $LT_{a,b}$ to be the polynomial
$$
LT_{a,b} (t) = t^{2b} - t^{b+a} - t^b - t^{b-a} + 1.
$$
As can be seen from Table~\ref{mindil-table}, for $g = 2,3,4,5,7,8$, the PF polynomial for the minimum dilatations
of orientable pseudo-Anosov mapping classes is a Lanneau-Thiffeault polynomial.

Question~\ref{LTquestion} can be rephrased as follows.

\begin{question}[Lanneau-Thiffeault Question] For even $g \ge 2$ is it true that
$$
\delta_g^+ = |LT_{1,g}|
$$ 
where $|LT_{1,g}|$ is the house of  $LT_{1,g}(t)$?
\end{question}

By the following result, $|LT_{1,g}|$ is an upper bound for $\delta_g^+$ for $g$ ranging in an arithmetic sequence
or even integers.

\begin{theorem} \label{LTpoly-thm} [\cite{Hironaka:LT}]
For each $g \equiv 2,4 \pmod 6$, there is an orientable pseudo-Anosov mapping class on a genus $g$ closed surface
with dilatation equal to $|LT_{1,g}|$. 
\end{theorem}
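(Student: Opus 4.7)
The plan is to realize each required mapping class explicitly as the monodromy of an integral cohomology class on the fibered face $F_s$ of $M_s = M_{\mathrm{m}}(\tfrac{1}{-2})$, and then to close up punctures at singularities of the invariant foliations to land on a closed surface. All the machinery needed for this has been assembled in the preceding subsections: the Teichmüller polynomial $\theta_s(t^a, t^b) = t^{2b} - t^{b+a} - t^b - t^{b-a} + 1$ on $F_s$, the orientability criterion of Proposition \ref{orientable-prop}, the puncture count in terms of $d_1 = \gcd(a, 3b)$ and $d_2 = \gcd(3a, b)$, and the genus formula $g(a,b) = 1 + b - s/2$ coming from the degree of the Alexander polynomial.

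First I would specialize to the integral cohomology class $\alpha(1,g) \in F_s \cdot \mathbb{R}^+$ for each $g$ with $g \equiv 2, 4 \pmod 6$, noting that this lies in the fibered cone since $b = g > 1 = a$ and $\gcd(1, g) = 1$. By Theorem \ref{McMullen-thm}, the dilatation of the associated monodromy equals the house of $\theta_s$ specialized at $(a,b) = (1,g)$, which is precisely $|LT_{1,g}|$. Orientability follows from Proposition \ref{orientable-prop} since $a = 1$ is odd and $b = g$ is even. The hypothesis $g \equiv 2, 4 \pmod 6$ forces $3 \nmid g$, so $d_1 = \gcd(1, 3g) = 1$ and $d_2 = \gcd(3, g) = 1$, yielding $s = d_1 + d_2 = 2$ punctures. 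The genus formula then gives $g(1, g) = 1 + g - 1 = g$, exactly as required.

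Finally I would convert the punctured monodromy into a pseudo-Anosov on a closed surface by reversing the removing-singularities construction: attach a disk at each of the two punctures and declare its center to be a singularity of the extended foliations. Closing disks does not change the genus, so the closed surface has genus $g$; the pseudo-Anosov dynamics and orientability are inherited from the punctured data, and the dilatation is unchanged. The most delicate step in the plan is checking that Proposition \ref{F'-prop}(3) applies, i.e.\ that the periodic orbits of the suspension flow coming from the Dehn-filled cusp of $M_{\mathrm{m}}$ are not poles of the associated quadratic differential; this is what ensures the filled-in mapping class is genuinely pseudo-Anosov with dilatation $|LT_{1,g}|$, and it can be verified directly from the explicit foliation data on $M_{\mathrm{m}}$ by noting that the predicted singularity degrees $3g-2$ and $g-2$ (recorded in the preceding corollary) are non-negative for all $g \geq 2$, confirming that the filled points carry honest prong singularities rather than punctures of pole type.
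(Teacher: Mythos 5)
Your proposal is correct and tracks exactly the argument the paper assembles in the subsection on $M_s = M_{\mathrm{m}}(\tfrac{1}{-2})$: specialize $\theta_s$ at $\alpha(1,g)$, invoke Proposition~\ref{orientable-prop} for orientability, use the $\gcd$ puncture count and the Alexander-polynomial genus formula, and close up the two punctures. The only minor quibble is that invoking Proposition~\ref{F'-prop}(3) is a detour here — since $\theta_s$ is computed directly as the Teichm\"uller polynomial of the hyperbolic manifold $M_s$ itself, the monodromies are automatically pseudo-Anosov by Theorem~\ref{hyperbolicity-thm}, and the only check needed when capping the punctures is that their prong counts are $\geq 2$ (degrees $3g-2, g-2 \geq 0$), which you do verify.
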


\section{Fat train track maps and automata}\label{traintracks-section}

For each pseudo-Anosov mapping class, one can associate a fat
train track map that encodes essential geometric information, including
information about singularities, the invariant stable foliation, and dilatations.
In this section, we give relevant background and definitions.

\subsection{Train tracks and train track maps.}
A train track is a finite topological graph $\tau$ (or 1-complex) with 
no double edges or vertices of degree one.  
A {\it smoothing} of $\tau$ at a vertex $v$ is a choice of tangent
directions for the half edges of $\tau$   that meet at $v$, that is if $e_1$ and $e_2$ meet at a vertex,
then they meet either smoothly or in a cusp.  

 In Figure~\ref{smoothing3-fig}, $e_3$ meets $e_1$ and $e_2$ smoothly, while
$e_1$ and $e_2$ meet at a cusp.

\begin{figure}[htbp]
\begin{center}
\includegraphics[width=3in]{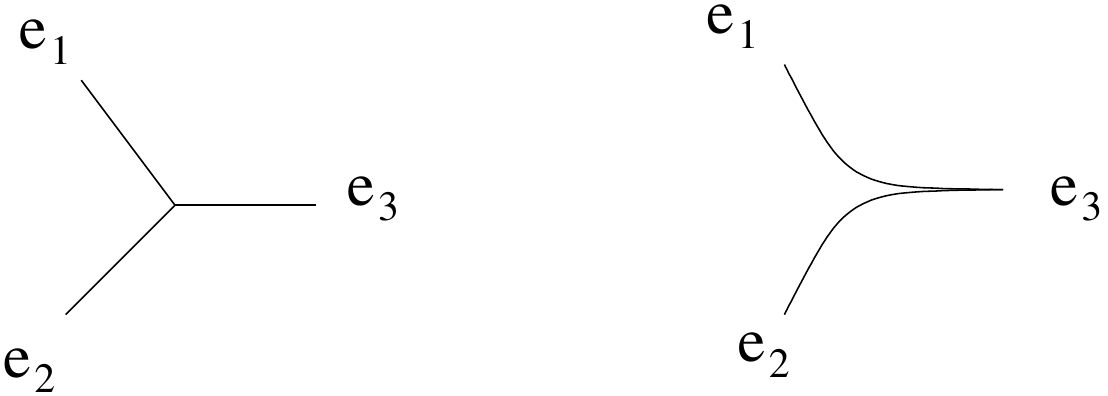}
\caption{Smoothing at a trivalent vertex}
\label{smoothing3-fig}
\end{center}
\end{figure}

\noindent
Figure~\ref{smoothing4-fig} shows a smoothing of a degree four vertex.

\begin{figure}[htbp]
\begin{center}
\includegraphics[width=3in]{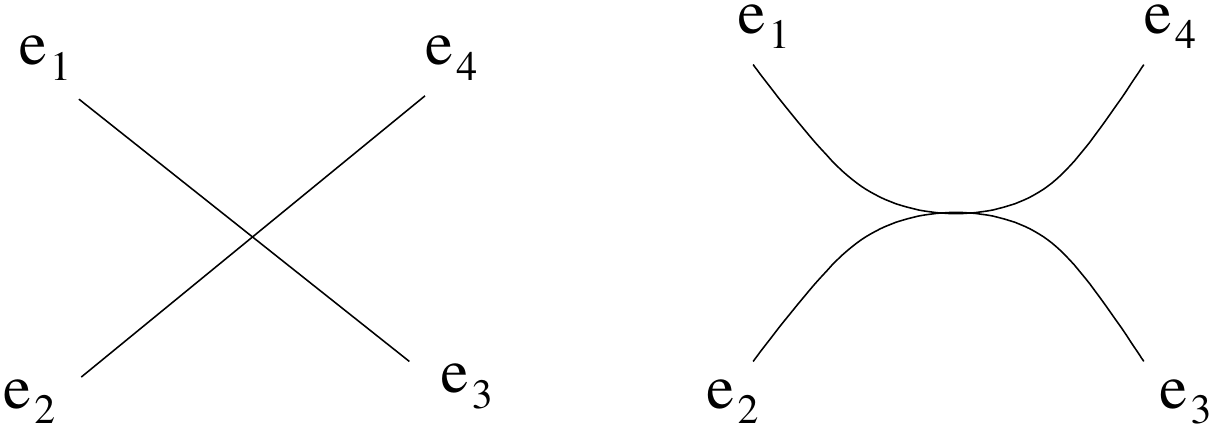}
\caption{Smoothing at a degree 4 vertex}
\label{smoothing4-fig}
\end{center}
\end{figure}

For our examples, we will consider train tracks consisting of a $3b$-gon whose
edges meet in cusps and $2b$-edges attached smoothly to the vertices 
of the $3b$-gon in one of the ways shown in Figure~\ref{smoothing3-fig}
and Figure~\ref{smoothing4-fig}.

By a {\it fat graph}, we mean a graph  such that at any vertex $v$, there is a cyclic ordering of the half edges
that meet at $v$.  This gives a local embedding of the half edges meeting at $v$ into a disk centered at $v$.
Given any fat graph $\Gamma$, there is a canonical 
orientable surface $S_\Gamma$
with boundary  on which $\Gamma$ embeds so that 
\begin{enumerate}
\item at each vertex the ordering of the edges
corresponds to the counterclockwise ordering on the surface; and
\item $S_\Gamma$ deformation retracts to the image of $\Gamma$
under the embedding.
\end{enumerate}
Each boundary component is one boundary component of an annular
complementary component of $\tau$ on $S_\Gamma$.   Consider the
edges surrounding the other {\it interior} boundary component.  Each
time two adjacent edges meet in a cusp, we call it a {\it vertex of the polygon
formed by $\tau$ around the boundary component}.   If the number of
vertices of the polygon is $k$, we say the boundary component is
{\it contained in a $k$-gon} of $\tau$.

A fat train track $\tau$ embedded on a surface $S$  {\it fills}  $S$ if $S$ is obtained from $S_\tau$ by filling in some subset (possibly empty)
of the boundary components of $S_\tau$ with disks.  

A {\it train track map} $f : \tau \rightarrow \tau$ is a  local embedding so that vertices map to vertices, and
edges map to edge-paths on $\tau$ so that no subinterval of an edge passes across two half edges meeting
at a cusp.  We consider train track maps up to isotopy on $\tau$.  

A train track map $f$ determines a linear transformation $\R^{\mathcal{E}}$ to itself as followis.
Let $\mathcal E$ be the set of (unoriented) edges of $\tau$.  Given $e \in \mathcal E$, let
$$
f_*(e) = \sum_{e'} a_{e'} e',
$$
where $a_{e'}$ is the number of times $f(e)$ passes over $e'$.  
Define $T: \R^{\mathcal E} \rightarrow \R^{\mathcal E}$, where for each $w \in \R^{\mathcal E}$, 
$$
T(w)(e) = w(f_*(e)),
$$
where $w$ extends linearly.  The transformation $T$ is called the {\it transition map} defined by $f$.

The {\it weight space} $W_\tau$ of a train track $\tau$ is the subspace of $\R^{\mathcal E}$ consisting of edge labels
so that if three half edges $e_1$, $e_2$ and $e_3$ meet at a vertex as in Figure~\ref{smoothing3-fig}, then
$$
w(e_1) + w(e_2) = w(e_3),
$$
and if $e_1$, $e_2$, $e_3$ and $e_4$ meet as in Figure~\ref{smoothing4-fig}, then
$$
w(e_1) + w(e_2) = w(e_3) + w(e_4).
$$
An edge labeling $w$ determines a labeling on edge paths, which we also
denote by $w$.  Given a train track map $f$ with transition map $T$, we have
$T(W_\tau) = W_\tau$.

A train track $\tau \subset S$ and train track map $f : \tau \rightarrow \tau$ is {\it compatible} with a 
mapping class $(S,\phi)$, if $\tau$ fills $S$ and the induced map $\phi_*$ on $\tau$ equals $f$.

\begin{theorem} If $(S,\phi)$ is pseudo-Anosov, then 
\begin{enumerate}
\item $(S,\phi)$ has a compatible train track $\tau$ and train track map $f : \tau \rightarrow \tau$;
\item the induced map $f_*$ on $W_\tau$ is Perron-Frobenius, and preserves a symplectic form; and
\item $\lambda(\phi)$ is the spectral radius of $f_*$.
\end{enumerate}
\end{theorem}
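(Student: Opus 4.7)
The plan is to extract all three conclusions directly from the pair of invariant measured foliations $(\mathcal F^u,\nu^u)$ and $(\mathcal F^s,\nu^s)$ that come with $\phi$ by hypothesis. I would first construct $\tau$ as a Whitehead-style collapse of a small regular neighborhood of the singular set of $\mathcal F^u$: in a bi-foliated chart around each $k$-pronged singularity, collapse the leaves of the stable foliation to points, and away from singularities one is in a product chart, so the quotient is a smooth 1-complex embedded in $S$ that is carried by $\mathcal F^u$ and transverse to $\mathcal F^s$. The cyclic ordering of half-edges at each vertex is inherited from the orientation of $S$, so $\tau$ is a fat train track; the smoothings are determined by which half-edges are tangent to the same leaf of $\mathcal F^u$. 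The complementary components of $\tau$ correspond to singularities (and, in the bounded case, to boundary components), so $\tau$ fills $S$.

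Because $\phi$ preserves the foliated picture up to the scaling factors $\lambda$ and $1/\lambda$, the image $\phi(\tau)$ sits in the same bi-foliated structure and, after an isotopy supported in the Markov boxes of a Markov partition for $\phi$, becomes carried by $\tau$ with cusps respected. Pushing forward by this carrying defines the train track map $f:\tau \to \tau$ compatible with $\phi$. The existence of such a Markov partition, which is the key input, is due to Fathi--Laudenbach--Po\'enaru \cite{FLP79}.

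For parts (2) and (3), I identify $W_\tau$ with the space of transverse measures on $\tau$. The unstable measure $\nu^u$ assigns to each edge $e$ the $\nu^u$-mass of any short transverse arc through $e$, producing a strictly positive vector $w^u\in W_\tau$ satisfying $f_*(w^u)=\lambda w^u$ since $\phi$ stretches $\nu^u$ by $\lambda$. The matrix of $f_*$ in the edge basis has nonnegative integer entries (counting how many times the edge path $f(e)$ traverses each edge), and the topological mixing of the pseudo-Anosov map forces aperiodicity of the associated digraph, so some power of the matrix is strictly positive. Perron--Frobenius then gives that $\lambda$ is the spectral radius and that $w^u$ is (up to scale) the unique positive eigenvector. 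The symplectic form on $W_\tau$ is Thurston's intersection pairing, which may be described as the restriction of the algebraic intersection pairing via a natural map from $W_\tau$ into $H_1(S,\partial S;\R)$ obtained by realizing each weight as a measured lamination and intersecting with the stable one; since $f_*$ is induced by the homeomorphism $\phi$, it preserves this form.

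The principal obstacle is the carrying step: producing $\tau$ so that $\phi(\tau)$ is \emph{carried} by $\tau$ in the strict combinatorial sense (respecting cusps and smoothings), not merely homotopic to a carried graph. This requires the technical machinery of Markov partitions for pseudo-Anosov maps together with a careful local analysis near the singularities, and it is where all the geometric content of the theorem is concentrated. Once the carrying property is in hand, the Perron--Frobenius conclusion and the symplectic invariance are formal consequences of the positivity of $w^u$ and the fact that $f_*$ is induced by a surface homeomorphism.
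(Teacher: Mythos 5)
The paper does not give a proof of this theorem; it is stated as a known result, implicitly attributed to the standard references (\cite{FLP79}, \cite{Thurston88}) cited in the introduction for the theory of pseudo-Anosov maps and train tracks. So your sketch cannot be compared against a proof in the paper itself, only assessed on its own terms.

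Your outline follows the standard route (Markov partitions from FLP, collapse along the stable foliation to build $\tau$, carrying of $\phi(\tau)$ by $\tau$, positive eigenvector from $\nu^u$, Thurston's intersection pairing), and in spirit this is correct. There is, however, a genuine gap between your argument and what the statement actually asserts in part (2). Your Perron--Frobenius argument is about the transition matrix $T$ acting on $\R^{\mathcal E}$ in the full edge basis: nonnegative integer entries, primitivity from mixing. But the theorem concerns the \emph{restriction} $f_*$ of $T$ to the invariant subspace $W_\tau$. ``Perron--Frobenius'' (as the paper itself defines it: nonnegative entries with some power strictly positive) is a basis-dependent property, and a restriction of a Perron--Frobenius matrix to an invariant subspace need not have nonnegative entries in an arbitrary basis. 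You must specify a basis of $W_\tau$ and show that $f_*$ has nonnegative, eventually-positive matrix entries in that basis. The paper quietly signals the fix in the paragraph just after the theorem: one chooses a subcollection of \emph{real} edges whose duals form a basis for $W_\tau$ (the infinitesimal edges are eliminated via the switch relations), and it is in this real-edge basis that $f_*$ is claimed to be Perron--Frobenius. Your proof never selects such a basis nor verifies positivity there.

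Two smaller points. First, your positive eigenvector $w^u\in W_\tau$ with $T w^u = \lambda w^u$ does show $\lambda$ is an eigenvalue of the restriction, and the Perron--Frobenius theorem for the \emph{full} matrix makes $\lambda$ its spectral radius, hence also the spectral radius of the restriction; so part (3) is fine once part (2) (or at least the PF property of the full $T$) is established. But you should say this explicitly rather than leave the reader to infer that the restriction's spectral radius is controlled by the full matrix. Second, the symplectic form: the pairing you describe is correct in outline, but its nondegeneracy on $W_\tau$ requires that $\tau$ fill $S$ and that $W_\tau$ have the right even dimension; it is worth noting that this is exactly where the ``fills $S$'' hypothesis from your construction is used, so the argument is not purely formal.
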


In the examples that follow, it is possible to find a subcollection of edges
 in  $\mathcal E$  whose duals in $\R^{\mathcal E}$ form a basis
for $W_\tau$.  We call these the {\it real} edges of $\tau$ and the complementary set of edges the {\it infinitessimal} edges.

\subsection{Simplest hyperbolic braid}
Figure~\ref{simplest-fig} gives an example of a fat train track and train track map compatible with the
simplest hyperbolic braid.   The weights in the weight space are determined by their labels on the two longer edges of the
train track, and the three encircling loops are the corresponding infinitessimal edges.   The action of the simplest hyperbolic braid
monodromy defined by $\sigma_1\sigma_2^{-1}$ acts on the real edges according to the matrix
$$
\left [
\begin{array}{ll}
1 & 1\\
1 & 2
\end{array}
\right ],
$$
and the dilatation is the largest eigenvalue $\frac{3 + \sqrt{5}}{2} = \gamma_0^2$.

\begin{figure}[htbp]
\begin{center}
\includegraphics[width=3.5in]{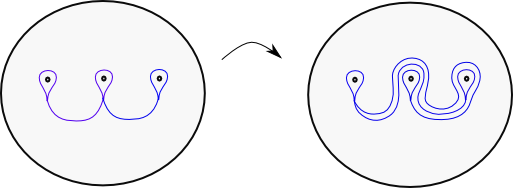}
\caption{Train track for simplest hyperbolic braid monodromy}
\label{simplest-fig}
\end{center}
\end{figure}

\subsection{Orientable train tracks}
Each train track on $S$ determines a foliation on $S$ as follows.
For each complementary region of $\tau$ on $S$ surrounded
by a $k$-gon, the foliation has a $k$-pronged singularity. 
A train track is orientable, if there is an orientation on the edges
so that if two edges meet smoothly at a vertex, the orientations
are compatible.     

Figure~\ref{traintrack-singularity-fig} sketches the foliation around 
a boundary component
of $S$ corresponding to a hexagon on a fat train track.   The
orientation on the train track determines an orientation on the
foliations.

Thus, we have the following.

\begin{proposition} A pseudo-Anosov map $(S,\phi)$ that has a compatible
train track map $f : \tau \rightarrow \tau$, where $\tau$ is orientable,
is orientable.
\end{proposition}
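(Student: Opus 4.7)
The plan is to transfer the orientation from $\tau$ directly to the unstable foliation $\sF^u$, and then argue that the transverse stable foliation $\sF^s$ must also be orientable as a consequence.

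First I would set up the local carrying picture. Since $\tau$ is compatible with $(S,\phi)$, the train track $\tau$ carries the unstable invariant foliation $\sF^u$: the edges of $\tau$ are (isotopic to) arcs of leaves of $\sF^u$, and a vector in the weight space $W_\tau$ records the transverse measure. The smoothing data at each vertex encodes how adjacent leaf-arcs concatenate; a smooth junction means two arcs meet tangentially and continue as a single leaf, while a cusp signals a prong of a singularity in the adjacent complementary region. Away from the singular set $\Sigma \subset S$, a regular neighborhood of $\tau$ deformation-retracts onto $\tau$ in a way compatible with the leaf structure of $\sF^u$.

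Next I would transport the orientation. The hypothesis gives a direction on each edge of $\tau$ that is compatible across every smooth junction. Pushing forward along the carrying map, each leaf of $\sF^u$ disjoint from $\Sigma$ inherits a consistent direction, because motion along a leaf corresponds to a concatenation of smooth junctions on $\tau$. At a complementary $k$-gon of $\tau$ (a $k$-pronged singularity of $\sF^u$) the boundary edges meet only in cusps; the local orientations of $\tau$ around the polygon extend continuously across the singularity exactly when $k$ is even, and this parity is forced by the orientability hypothesis at every complementary region. Hence $\sF^u$ is an orientable measured foliation on $S$.

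Finally, to upgrade from orientability of $\sF^u$ to orientability of both foliations, I would invoke the quadratic differential underlying the pseudo-Anosov structure. The pair $(\sF^u,\sF^s)$ is determined by a holomorphic quadratic differential $q$ on $S$ (with the same prong data at each zero for both foliations). Orientability of $\sF^u$ is equivalent to $q = \omega^2$ for some abelian differential $\omega$; writing $\omega = \alpha + i\beta$ then orients $\sF^u$ via $\alpha$ and $\sF^s$ via $\beta$ simultaneously. Therefore $(S,\phi)$ is orientable in the sense of the previous subsection.

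The main obstacle will be the second step: the informal slogan \emph{smooth junction lifts to consistent leaf orientation} has to be made precise for higher-valence vertices as well, such as the degree-$4$ case shown in Figure~\ref{smoothing4-fig}, and must be matched correctly to the even-parity condition at each complementary polygon. Since the carrying map of $\tau$ is only canonical up to isotopy, some additional care is required to check that the induced orientation of $\sF^u$ is independent of these choices, after which the quadratic-differential step in the last paragraph is essentially formal.
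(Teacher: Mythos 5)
Your proof is correct and takes essentially the same route as the paper, whose entire argument is the remark that an orientation of the edges of $\tau$ induces an orientation on the carried invariant foliation, pictured near a complementary polygon in Figure~\ref{traintrack-singularity-fig}. You supply the details the paper leaves tacit: the parity constraint at each complementary $k$-gon and the standard observation that an orientable half of a pseudo-Anosov pair comes from writing the quadratic differential as $q=\omega^2$, which orients the transverse foliation simultaneously.
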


\begin{figure}[htbp] 
   \centering
   \includegraphics[width=3.5in]{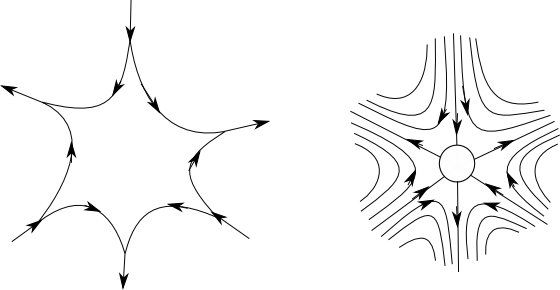} 
   \caption{A hexagon on a fat traintrack, and corresponding foliations.}
   \label{traintrack-singularity-fig}
\end{figure}

\subsection{Train track automaton}
Given two fat train tracks $\tau_1$ and $\tau_2$, a {\it folding map} $\mathfrak f: \tau_1 \rightarrow \tau_2$
is a quotient map obtained by identifying edge-segments of a pair of edge in $\tau_1$ as follows.
Take two edges $e_1$ and $e_2$ on $\tau_1$ with half edges that meet
at a cusp at a vertex $v$, and that are adjacent in the fat graph ordering.  Then the folding
map of $e_1$ over $e_2$ is obtained by identifying the embedded image of a closed interval in $e_1$ with endpoint
$v$ with $e_2$ by a homeomorphism sending $v$ to $v$.
The fat train track automaton is the set of all fat train tracks with a directed edge from one train track to another
if  there is a folding
map between them. 

Each folding map is a homotopy equivalence of graphs and 
defines a linear transformation between edge labels, and between weight spaces.  A circuit in the
fat train track automaton
corresponds to a composition of folding maps together with an homeomorphism of train tracks. 
Thus, the transition matrix for the train track map corresponds to a composition of transition matrices
for folding maps and a permutation matrix.

 A train track automaton is a directed 
graph whose vertices are train tracks and edges are folding maps.

\begin{proposition} [Stallings \cite{Stallings:folds}, Ham-Song \cite{HamSong05}] Any pseudo-Anosov mapping class
can be represented by a circuit on a train track automaton.
\end{proposition}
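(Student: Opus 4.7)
The plan is to start from a fat train track map $f:\tau \to \tau$ compatible with the pseudo-Anosov class $(S,\phi)$, whose existence is guaranteed by the theorem on the existence of compatible train track maps stated earlier. The goal is to factor $f$ as a finite sequence of elementary folds followed by a fat-train-track homeomorphism. Since such a factorization corresponds precisely to a directed path in the fat train track automaton that closes up at the starting vertex, this will exhibit $(S,\phi)$ as a circuit.

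First I would invoke Stallings' folding theorem: any homotopy equivalence $f:\tau\to\tau'$ between finite topological graphs can be written as a finite composition of elementary folds $\mathfrak f_1,\dots,\mathfrak f_N$, followed by a homeomorphism. Applied to our train track map $f$, this gives a sequence
\[
\tau=\tau^{(0)} \xrightarrow{\mathfrak f_1} \tau^{(1)} \xrightarrow{\mathfrak f_2} \cdots \xrightarrow{\mathfrak f_N} \tau^{(N)} \xrightarrow{h} \tau,
\]
where $h$ is a homeomorphism. Each elementary fold identifies an initial segment of one half-edge with an adjacent half-edge meeting it, which is exactly the data of a directed edge in the train track automaton.

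The main step, and the principal obstacle, is to show that each $\mathfrak f_i$ can be chosen to be a \emph{legal} fold of fat train tracks: the two half-edges being identified must meet at a cusp and be adjacent in the cyclic order at their common vertex, so that the resulting quotient $\tau^{(i)}$ inherits a well-defined fat train track structure and no illegal turn is created. Here one uses that $f$ is a train track map in the strong sense: it sends edges to edge-paths that never traverse a cusp, so pulling back the combinatorial structure of $\tau$ along $f$ equips $\tau$ with a refinement whose successive collapses by folds remain train tracks. The Ham--Song argument for fat train tracks ensures that at each stage there is an adjacent cusped pair to fold, so that the smoothing and cyclic-order data extend consistently across the sequence.

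Finally, since $\tau^{(N)}$ is homeomorphic to $\tau$ as a fat train track, it represents the same vertex of the automaton, and the sequence $(\mathfrak f_1,\dots,\mathfrak f_N)$ followed by the identification via $h$ is a closed circuit based at $\tau$. Composing the linear transition maps associated to each fold, together with the permutation matrix coming from $h$, recovers the transition matrix of $f$ on $W_\tau$, whose spectral radius is $\lambda(\phi)$; this confirms that the circuit genuinely encodes the dynamics of $(S,\phi)$.
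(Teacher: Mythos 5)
The paper does not supply a proof of this proposition; it is stated as a cited result of Stallings and Ham--Song. Your proposal is a reasonable sketch of what those references establish, and the general shape of your argument — factor the train track map into elementary folds plus a homeomorphism via Stallings' folding theorem, then argue the folds can be taken to be legal folds of fat train tracks — is the standard route and is consistent with the way the authors intend the citation to be read.

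That said, the step you yourself flag as ``the principal obstacle'' is where the real content lies and your treatment of it is still essentially an appeal to authority. Stallings' theorem, applied blindly, gives folds that identify any two half-edges emanating from a common vertex; there is no guarantee these folds respect the cusp/smooth decomposition of the switches, are adjacent in the cyclic order, or leave the quotient with a consistent fat-graph and smoothing structure. The claim that one can always \emph{choose} such a factorization requires an argument: one has to show that whenever the map $f$ (or a partial unfolding of it) fails to be an immersion at a vertex, the offending pair of half-edges can be taken to lie at a cusp and be adjacent in the cyclic order, and that performing that fold does not create an illegal turn or destroy the efficiency of the remaining map. That is precisely what Ham--Song proves; your sentence ``pulling back the combinatorial structure of $\tau$ along $f$ equips $\tau$ with a refinement whose successive collapses by folds remain train tracks'' asserts this but does not demonstrate it. Since the paper itself punts to the same references, this is a defensible level of detail, but be aware that your write-up is restating the proposition rather than proving the hard part. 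One smaller point worth making explicit: the terminal map $h$ in the Stallings factorization is a priori only an immersion; to conclude it is a homeomorphism you should note that $f$ is a $\pi_1$-isomorphism and that the Perron--Frobenius property of the transition matrix forces surjectivity, so that $h$ is a surjective immersion between finite graphs of equal Euler characteristic, hence a homeomorphism.
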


\section{Small dilatation examples}\label{example-section}
In this section, we define train track maps
for  mapping classes $(S_n,\phi_n)$ for all integers $n \ge 2$, and describe corresponding circuits
in the train track folding automaton, and digraphs.  These train track maps define mapping classes
with the same genus, boundary components, and dilatations as $(S_{1,n},\phi_{1,n})$.

We begin with a fat train track map defining $(S_2,\phi_2)$ 
in Figure~\ref{ttsix-fig} .
One can check that all of the train tracks in the circuit shown in Figure~\ref{ttsix-fig} fix a genus two surface with two complementary
disk components, one bounded by the central hexagon, and the other bounded by the edges of the hexagon and by each side of the four
real edges.  The train track map defined by composing the folded mapping
classes described in the circuit corresponds to the orientable pseudo-Anoosv mapping classes whose dilatation 
realizes $\delta_2 = \delta_2^+$.  

The center hexagon is made up of infinitessimal edges and the other four edges are real edges.  
Starting at the upper left train track in the  the automaton, we first fold edge $a$ over edge $c$
and the following adjacent infinitessimal edge.  In the next step we fold $b$ over the new edge $a$.
Then we fold the new edge $b$ over $c$.  Finally by a rotation, we return to the original train track.

\begin{figure}[htbp]
\begin{center}
\includegraphics[width=3.5in]{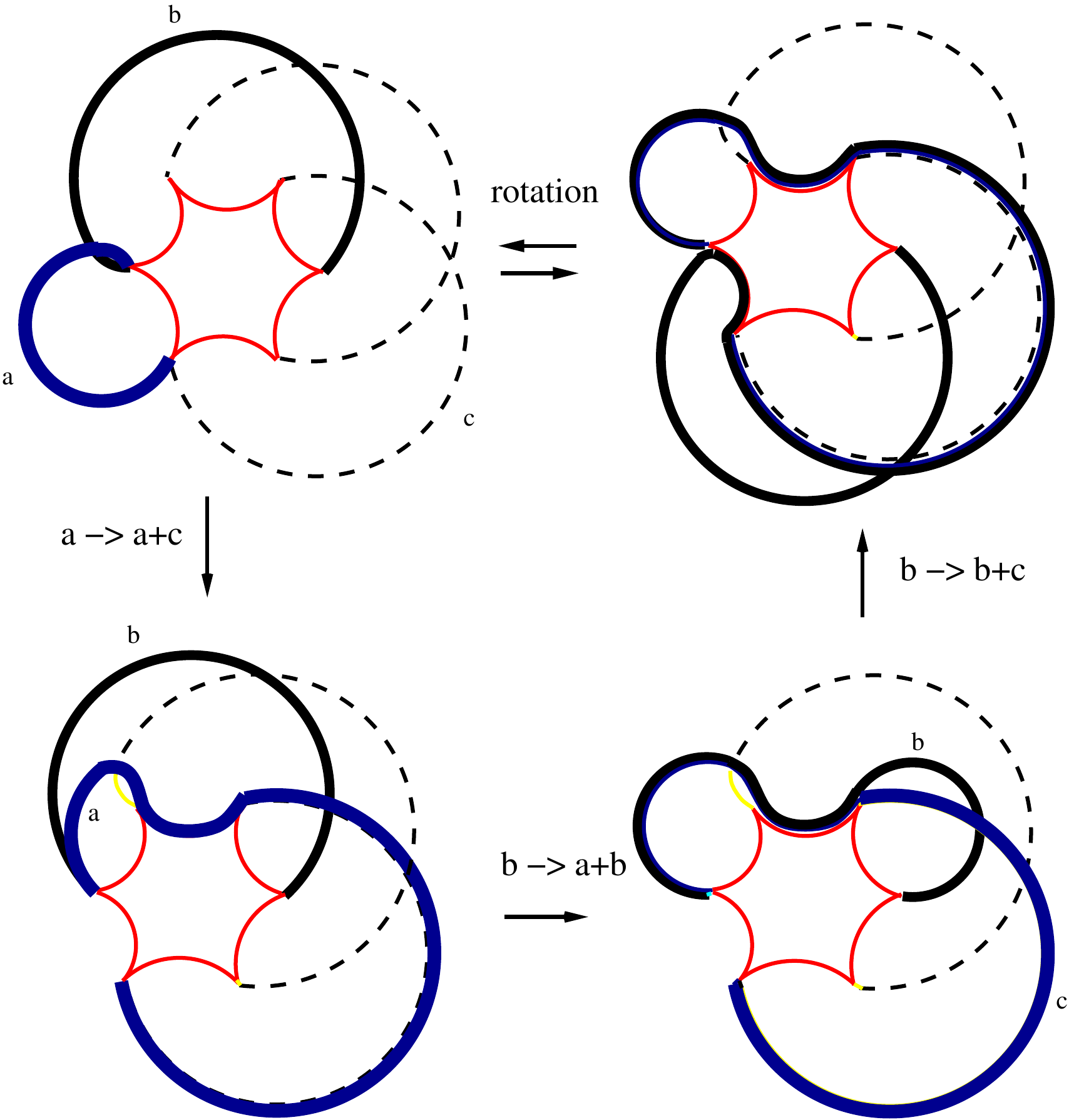}
\caption{Train track circuit for example realizing $\delta_2^+$ and $\delta_2$.}
\label{ttsix-fig}
\end{center}
\end{figure}

The transition matrices for the folding diagrams starting at the top left and going around counter-clockwise are:
\medskip

$$
\left [
\begin{array}{llll}
1 & 0 & 0 & 0\\
0 & 1 & 0 & 0\\
1 & 0 & 1 & 0\\
0 & 0 & 0 & 1
\end{array}
\right ]
\left [
\begin{array}{llll}
1 & 1 & 0 & 0\\
0 & 1 & 0 & 0\\
0 & 0 & 1 & 0\\
0 & 0 & 0 & 1
\end{array}
\right ]
\left [
\begin{array}{llll}
1 & 0 & 0 & 0\\
0 & 1 & 0 & 0\\
0 & 1 & 1 & 0\\
0 & 0 & 0 & 1
\end{array}
\right ]\qquad\mbox{and}\qquad
\left [
\begin{array}{llll}
1 & 0 & 0 & 0\\
0 & 0 & 0 & 1\\
0 & 1 & 0 & 0\\
0 & 0 & 1 & 0
\end{array}
\right ].
$$
The composition is given by
$$
\left [
\begin{array}{llll}
1 & 1 & 0 & 0\\
0 & 0 & 0 & 1\\
0 & 1 & 0 & 0\\
1 & 1 & 1 & 0\\
\end{array}
\right ]
$$
\medskip

\noindent
and its characteristic polynomial is $x^4 - x^3 - x^2 - x + 1$.  This gives
$$
\delta_2 = \delta_2^+ = |x^4 - x^3 - x^2 - x + 1| \approx 1.72208.
$$

The train track in Figure~\ref{ttsix-fig} generalizes to the one in 
Figure~\ref{tt-fig}.

\begin{figure}[htbp]
\begin{center}
\includegraphics[width=4in]{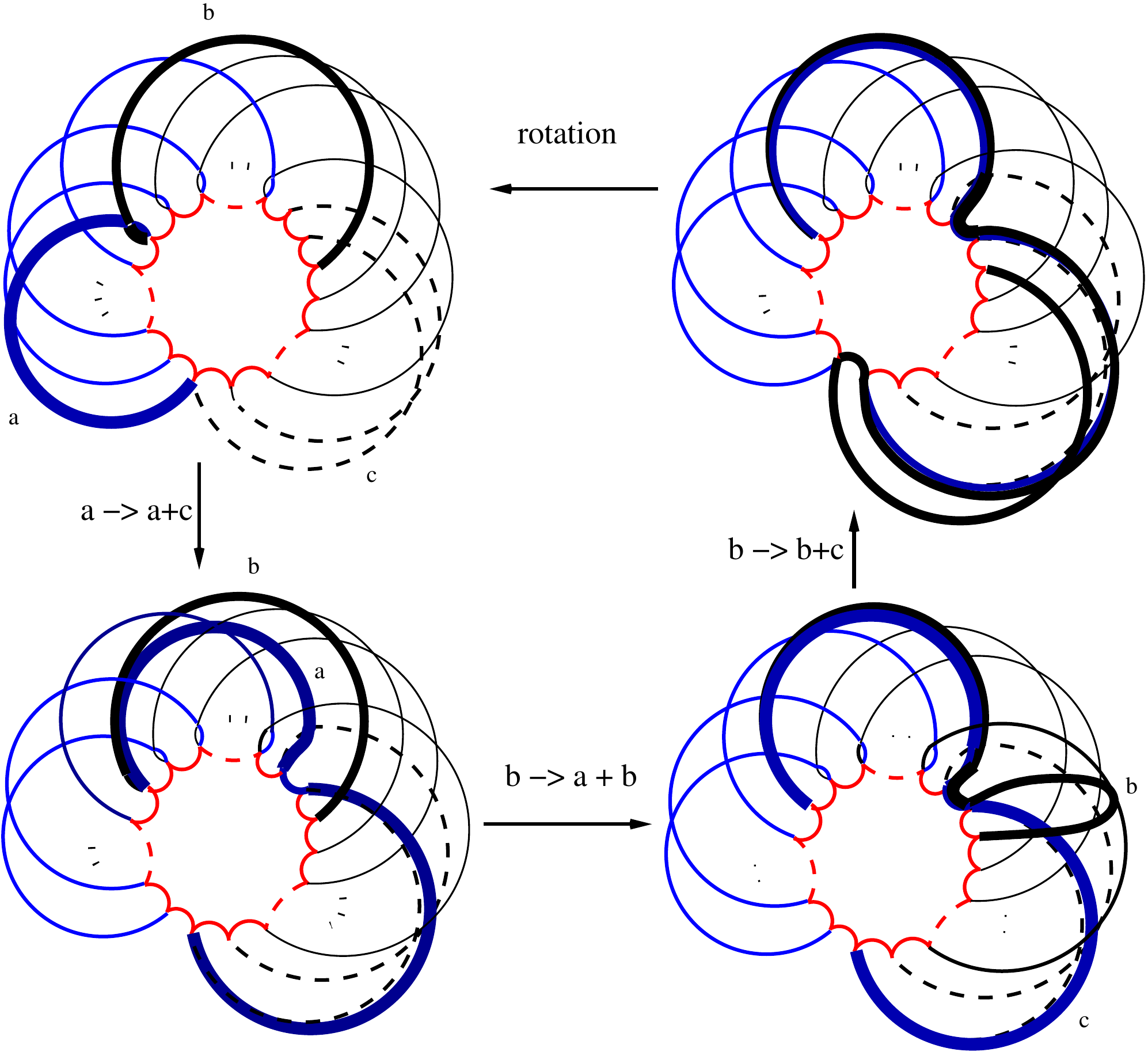}
\caption{Circuit in train track automaton for $(S_n,\phi_n)$}
\label{tt-fig}
\end{center}
\end{figure}

Let $G_n$ be the digraphs in Figure~\ref{digraph-fig}. 
The ``shape" of the train track map and folding maps for $(S_n, \phi_n)$ are related 
to each other in a systematic way, and one observes the following.

\begin{proposition} The digraphs associated to the transition matrices
for the train track maps of $(S_n,\phi_n)$ are $G_n$, and hence
the dilatations of $(S_n,\phi_n)$ are given by
$$
\lambda(\phi_n) = |LT_{1,n}|.
$$
\end{proposition}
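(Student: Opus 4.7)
The plan is to mimic the $n=2$ verification carried out for Figure~\ref{ttsix-fig} and promote it to a pattern that works uniformly in $n$. First I would name the edges of $\tau_n$: label the $2n$ real edges $a_1,\dots,a_n,b_1,\dots,b_n$ (say, the ``$a$-family'' on one side of the central $3n$-gon and the ``$b$-family'' on the other), together with the $3n$ infinitessimal edges of the central polygon. I would then read off, directly from the pictures in Figure~\ref{tt-fig}, the four folding/rotation moves in the circuit: each one folds a real edge over an adjacent real edge (possibly across a short chain of infinitessimal edges), exactly as in the $n=2$ case but repeated along the $a$-family and $b$-family respectively. Each such move is a transvection on $\R^{\mathcal E}$ that fixes all edges except the one being folded, to which it adds the combinatorial image of its folded trajectory.

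Next I would compose the four resulting transvection/permutation matrices to obtain the transition matrix $T_n$ of $f_n:\tau_n\to\tau_n$. Since the weight space $W_{\tau_n}$ is spanned by the duals of the real edges, and since $T_n(W_{\tau_n})=W_{\tau_n}$, I would restrict $T_n$ to this basis to get an integer matrix $M_n$ of size $2n\times 2n$. By the theorem recalled in Section~\ref{traintracks-section}, $\lambda(\phi_n)$ is the spectral radius of $M_n$, so it suffices to compute $\det(xI-M_n)$.

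The key structural step is to match $M_n$ with the adjacency matrix of the digraph $G_n$ of Figure~\ref{digraph-fig}. The pattern of folds in Figure~\ref{tt-fig} produces, for each real edge, a trajectory through a fixed number of infinitessimal edges together with one longer run whose length grows linearly with $n$; after subdividing as in the convention of Section~\ref{Perron-section} (an edge labeled $m$ stands for a chain of $m$ edges with $m-1$ interior vertices), these trajectories are exactly the directed cycles encoding $G_n$. I would verify this by induction, or more cleanly by checking the base case $n=2$ against the matrix computed for Figure~\ref{ttsix-fig} and then observing that the only change when passing from $n$ to $n+1$ is to lengthen one chain of infinitessimal edges by one, which corresponds precisely to the labeled edge in Figure~\ref{digraph-fig} whose length indexes $n$.

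Once the digraphs are identified, the dilatation follows: the characteristic polynomial of the subdivided digraph $G_n$ is a standard computation (a single cycle relation on the labeled edges) and is exactly $LT_{1,n}(x)=x^{2n}-x^{n+1}-x^{n}-x^{n-1}+1$, as already used in the preceding paragraphs and in Theorem~\ref{LTpoly-thm}. Hence $\lambda(\phi_n)=|LT_{1,n}|$. The main obstacle I anticipate is the bookkeeping of the infinitessimal edges through the four folds: one must check that no fold illegally crosses a cusp, that the weight-space relations are preserved at every stage, and that the eventual ``closing rotation'' matches the fat-graph cyclic orderings so that the circuit indeed closes up; this is what turns the pictorial description of Figure~\ref{tt-fig} into an honest identity $M_n = \mathrm{Adj}(G_n)$ rather than just a similarity up to reordering.
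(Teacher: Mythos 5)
The paper actually gives no proof of this proposition: it is announced with the sentence ``the `shape' of the train track map and folding maps for $(S_n,\phi_n)$ are related to each other in a systematic way, and one observes the following,'' with only the $n=2$ case verified in full (the four $4\times 4$ folding matrices, their product, and the characteristic polynomial $x^4-x^3-x^2-x+1$). Your proposal is essentially an explicit plan to carry out what the paper leaves as a visual observation, and your strategy --- verify the $n=2$ base case against Figure~\ref{ttsix-fig}, read off the four folding transvections from Figure~\ref{tt-fig}, restrict the composed transition map to the weight space spanned by real-edge duals, and match the result to the adjacency matrix of the subdivided digraph $G_n$ from Figure~\ref{digraph-fig} --- is exactly the verification the author is implicitly invoking. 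So you are on the same route as the paper, just spelling it out.

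One caution about the inductive step. You claim that ``the only change when passing from $n$ to $n+1$ is to lengthen one chain of infinitessimal edges by one.'' Passing from $\tau_n$ to $\tau_{n+1}$ adds three infinitessimal edges to the central polygon and two real edges, so at the train-track level more than one thing changes. It may well collapse to a single parameter at the digraph level (which is what part (7) of Theorem~\ref{example-thm}, the fixed topological type of $G_n$, is asserting), but you would still need to check that the two new real edges enter the cycle structure in the expected position before the length-changes-by-one picture is justified. You also implicitly want the restriction of $T_n$ to $W_{\tau_n}$ to have characteristic polynomial exactly $LT_{1,n}$ of degree $2n$, whereas $G_n$ as drawn (with subdivided chains) has more than $2n$ vertices; the ``hence'' in the proposition is really about the house of the characteristic polynomial of $G_n$, not its full factorization, and a complete write-up would want to make this match precise (either by identifying the cyclotomic cofactor on the infinitessimal edges, or by working directly on the real-edge multidigraph). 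None of this threatens the conclusion, but it is precisely the gap between an observed proposition and a proved one, and your proposal is right to flag the bookkeeping as the main obstacle.
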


The genus of $S_n$ can be determined from the topological Euler characteristic of $G_n$,
$\chi(G_n)=2n$ and the number of boundary components of the fat graph.  There is
one component for the central $3n$-gon, and either one or three other boundary
components, depending on whether $n$ is divisible by 3.  This implies the following.

\begin{proposition} The surface $S_n$ has genus $g=n$ if $n=1,2\pmod 3$,
and has genus $g=n-1$ if $n=0\pmod 3$.
\end{proposition}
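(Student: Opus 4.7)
The plan is to reduce the statement to a boundary count via the Euler characteristic. Since the train track $\tau_n$ fills $S_n$, the surface $S_n$ is obtained from the canonical fat--graph surface $S_{\tau_n}$ by capping off each boundary component of $S_{\tau_n}$ with a disk; each capping raises $\chi$ by one. Writing $b(\tau_n)$ for the number of boundary components of $S_{\tau_n}$ and using the preceding sentence of the paper (which gives $\chi(\tau_n) = -2n$, the same as the Euler characteristic of the underlying $1$--complex of $G_n$), we obtain
\[
2 - 2g \;=\; \chi(S_n) \;=\; \chi(\tau_n) + b(\tau_n) \;=\; -2n + b(\tau_n),
\]
so $g = n + 1 - b(\tau_n)/2$. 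The proposition therefore reduces to the identity
\[
b(\tau_n) \;=\;
\begin{cases}
2, & 3 \nmid n,\\
4, & 3 \mid n.
\end{cases}
\]

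To count $b(\tau_n)$, observe first that the interior of the central $3n$-gon contributes exactly one boundary component in every case. The remaining components come from the ``outside'' of the graph, and tracing any such outer boundary reduces to a purely combinatorial problem: at each trivalent (or tetravalent) vertex the boundary is forced by the smoothing of Figures~\ref{smoothing3-fig}--\ref{smoothing4-fig} to leave along a prescribed adjacent edge. The pattern in Figure~\ref{tt-fig} shows that the real edges are attached to the $3n$ corners of the central polygon by a uniform rule that carries a natural $\mathbb{Z}/3n\mathbb{Z}$-shift; composing the forced turns produces a single permutation $\pi_n$ on the set of outer corners, and $b(\tau_n) - 1$ equals the number of cycles of $\pi_n$.

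The main obstacle, and the bulk of the work, is identifying $\pi_n$ explicitly as a cyclic shift by a step $s_n \in \mathbb{Z}/3n\mathbb{Z}$ and computing $\gcd(s_n,3n)$. From the attaching pattern I expect $s_n$ to be relatively prime to $n$ but with a fixed non-zero residue modulo $3$, so that $\pi_n$ is a single $3n$-cycle when $\gcd(n,3)=1$ (giving one extra boundary component and $b(\tau_n)=2$), and decomposes into three $n$-cycles when $3 \mid n$ (giving three extra boundary components and $b(\tau_n)=4$). Because the local attaching pattern between consecutive ``triples'' of real edges is identical for all $n\ge2$, once the step $s_n$ is pinned down for one value of $n$ the general statement will follow by induction on $n$, or equivalently by concatenating identical unit cells. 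I would anchor this with the two base cases visible in Figures~\ref{ttsix-fig} and \ref{tt-fig}: for $n=2$ direct inspection gives $b(\tau_2) = 2$, matching $g=2$, while for $n=3$ the threefold symmetry of the construction forces $b(\tau_3) = 4$, matching $g=n-1=2$. With the cycle count verified in general, the Euler characteristic computation above delivers the stated genus.
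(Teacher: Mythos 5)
Your overall reduction is exactly the paper's: cap every boundary component of the fat--graph surface, use $\chi(S_n)=\chi(\tau_n)+b(\tau_n)=-2n+b(\tau_n)$ to get $g=n+1-b(\tau_n)/2$, and then determine $b(\tau_n)$ from the combinatorics of the train track. The paper's own justification is equally terse at that last step --- it simply asserts one boundary component for the central $3n$-gon plus one or three further components according to whether $3\mid n$ --- so you are not falling below the paper's standard of proof by leaving the count as a sketch, and your base cases for $n=2,3$ are the same check the paper implicitly asks the reader to do. The paper does, however, have an independent derivation of the count earlier (the Lemma on punctures: $d_1=\gcd(1,3n)=1$ and $d_2=\gcd(3,n)$ via Dehn-filling coordinates on $M_s$), which gives a useful cross-check.

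The one genuine problem is that the combinatorial model you propose is internally inconsistent as stated. You posit a cyclic shift $\pi_n$ by $s_n$ on $3n$ outer corners with $\gcd(s_n,n)=1$ and $s_n\not\equiv 0\pmod 3$. But then $\gcd(s_n,3)=1$ too, and since any prime dividing both $s_n$ and $3n$ would have to divide either $3$ or $n$, we get $\gcd(s_n,3n)=1$ for \emph{every} $n$, so $\pi_n$ would always be a single $3n$-cycle and $b(\tau_n)$ would always equal $2$. Conversely, to get three $n$-cycles you need $\gcd(s_n,3n)=3$, hence $3\mid s_n$; when $3\mid n$ this forces $3\mid\gcd(s_n,n)$, contradicting $\gcd(s_n,n)=1$ precisely in the case where it matters. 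So before the ``concatenate identical unit cells'' induction can run, the model needs correcting --- either $s_n$ cannot be coprime to $n$ when $3\mid n$, or (as Figure~\ref{tt-fig} suggests from the non-uniform attaching of the $2n$ real edges to the $3n$ corners) the outer corners do not form a single $\mathbb{Z}/3n\mathbb{Z}$-orbit under one fixed shift. Once the actual turn pattern is read off the figure the Euler-characteristic step you wrote closes the argument.
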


From the train track maps, we can also determine when the mapping classes are orientable, for
this is exactly when the train tracks themselves are orientable as seen in the next proposition.

\begin{proposition} The mapping class $(S_n,\phi_n)$ is orientable if and only if $n$ is even.
\end{proposition}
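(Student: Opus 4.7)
The plan is to reduce orientability of $(S_n,\phi_n)$ to orientability of the train track $\tau_n$, and then resolve the latter by a parity count on the central $3n$-gon. First, I would invoke the criterion that an orientable compatible train track yields an orientable pseudo-Anosov mapping class; conversely, an orientable invariant foliation can always be used to orient the train track carrying it, since the edges of $\tau_n$ run parallel to the leaves of the stable foliation. So the task reduces to deciding for which $n$ the edges of $\tau_n$ admit orientations compatible with every smoothing.

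Next, I would examine the switch structure at each vertex $v$ of the central $3n$-gon: since the two polygon edges at $v$ meet in a cusp, they lie on the same side of the switch, while any real edge attached smoothly at $v$ lies on the opposite side. A compatible orientation must therefore orient the two polygon edges at $v$ consistently --- either both pointing into $v$, or both pointing out of $v$ --- which in turn forces adjacent polygon edges to alternate orientation as we go around the $3n$-gon. After $3n$ steps the orientation must close up with the starting choice, and this is possible precisely when $3n$ is even, i.e., when $n$ is even. For odd $n$ this obstruction immediately rules out any compatible orientation, so $\tau_n$, and hence $(S_n,\phi_n)$, is non-orientable.

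When $n$ is even, I would complete the argument by extending the alternating polygon orientation to the $2n$ real edges. At each endpoint of a real edge, the switch rule on the opposite side prescribes a forced orientation, and one must check from the periodic attachment pattern in Figure~\ref{tt-fig} that the two endpoint prescriptions agree --- equivalently, that every real edge spans two polygon vertices of opposite parity under the alternation. The main obstacle is precisely this real-edge verification, which requires a careful reading of the symmetric attachment pattern of real edges to the $3n$-gon; the core of the argument, however, is the elementary parity count on the polygon, which gives the ``only if'' direction at once and already exhibits where parity of $n$ enters.
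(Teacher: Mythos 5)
Your proposal is correct and takes essentially the same route as the paper: reduce to orientability of $\tau_n$, observe that an orientation must alternate across the cusps of the central $3n$-gon so that $3n$ (hence $n$) must be even, and for even $n$ extend the polygon orientation to the real edges. The paper records the same parity criterion by listing all complementary polygons (the $3n$-gon together with one $n$-gon or three $n/3$-gons), but since $3n$ is even exactly when $n$ is, your focus on the central polygon alone already gives the ``only if'' direction; your spelled-out alternation argument and the explicit converse of the train-track orientability criterion are just slightly more detailed versions of steps the paper takes for granted.
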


\begin{proof}  The complementary region of $(S_n,\phi_n)$ splits
into a central $3n$-gon and either one $n$-gon, or three  $n/3$-gons, 
depending on whether or not $n$ is divisible by 3.
In order for the train track to be orientable, we need to have each
polygon have an even number of sides.  Thus, $n$ must be even. 

When $n$ is even, there are two possible ways to orient the central $3n$-gon.
Each extends to a compatible orientation on the entire train track.  (An example
is shown in Figure~\ref{orientedtraintrack-fig}).
\end{proof}

\begin{figure}[htbp] 
   \centering
   \includegraphics[width=2.5in]{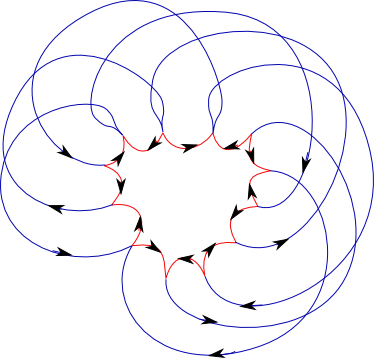} 
   \caption{Oriented train track for $n=4$.}
   \label{orientedtraintrack-fig}
\end{figure}


%

\begin{remark} 
{\em In \cite{LT09}, it is shown that the mapping class on a closed genus 2 surface with
minimum dilatation is unique up to standard equivalences.  Thus, the mapping class described in
Figure~\ref{ttsix-fig} is the same as the genus 2 monodromy of the complement of the link drawn in
Figure~\ref{six22link-fig}.  In a forthcoming paper, we will further elaborate on the construction shown 
in this section.}
\end{remark}

\bibliographystyle{../../math}
\bibliography{../../math}

\begin{thebibliography}{McM2}

\bibitem[AD]{AD10}
J.~Aaber and N.~Dunfield.
\newblock {Closed surface bundles of least volume}.
\newblock {\em Algebr. Geom. Topology} {\bf 10} (2010), 2315--2342.

\bibitem[Bir]{Birman:LT}
J.~Birman.
\newblock {On pseudo-{A}nosov mapping classes with minimum dilatation and
  Lanneau-Thiffeault numbers}.
\newblock {\em arxiv:1101.2383v1}  (2011).

\bibitem[CB]{CB88}
A.~Casson and S.~Bleiler.
\newblock {\em Automorphisms of surfaces after {N}ielsen and {T}hurston}.
\newblock Cambridge University Press, 1988.

\bibitem[CH]{CH08}
J.~Cho and J.~Ham.
\newblock {The minimal dilatation of a genus two surface}.
\newblock {\em Experiment. Math.} {\bf 17} (2008), 257--267.

\bibitem[Dob]{Dobrowolski79}
E.~Dobrowolski.
\newblock {On a question of {L}ehmer and the number of irreducible factors of a
  polynomial}.
\newblock {\em Acta. Arith.} {\bf 34} (1979), 391--401.

\bibitem[FLM]{FLM09}
B.~Farb, C.~Leininger, and D.~Margalit.
\newblock {Small dilatation pseudo-Anosovs and 3-manifolds}.
\newblock {\em Adv. Math} {\bf 228} (2011), 1466--1502.

\bibitem[FLP]{FLP79}
A.~Fathi, F.~Laudenbach, and V.~Po\'enaru.
\newblock {Some dynamics of pseudo-{A}nosov diffeomorphisms}.
\newblock In {\em Travaux de {T}hurston sur les surfaces}, volume 66-67 of {\em
  Ast\'erisque}. Soc. Math. France, Paris, 1979.

\bibitem[Fri]{Fried82}
D.~Fried.
\newblock {Flow equivalence, hyperbolic systems and a new zeta function for
  flows}.
\newblock {\em Comment. Math. Helvetici} {\bf 57} (1982), 237--259.

\bibitem[Gor]{Gordon:MagicManifold}
C.~Gordon.
\newblock {Small surfaces and Dehn filling}.
\newblock In {\em Proceedings of the Kirbyfest (Berkeley, CA, 1998)}, volume~2,
  pages 177--199. Coventry, 1999.

\bibitem[HS]{HamSong05}
J-Y Ham and W.~T. Song.
\newblock {The minimum dilatation of pseudo-{A}nosov 5-braids}.
\newblock {\em Experimental Mathematics} {\bf 16} (2007), 167,180.

\bibitem[Hir]{Hironaka:LT}
E.~Hironaka.
\newblock {Small dilatation pseudo-{A}nosov mapping classes coming from the
  simplest hyperbolic braid}.
\newblock {\em Algebr. Geom. Topol.} {\bf 10} (2010), 2041--2060.

\bibitem[HK]{HK:braidbounds}
E.~Hironaka and E.~Kin.
\newblock {A family of pseudo-{A}nosov braids with small dilatation}.
\newblock {\em Algebr. Geom. Topol.} {\bf 6} (2006), 699--738.

\bibitem[KT1]{KT:magicmanifold}
E.~Kin and M.~Takasawa.
\newblock {Pseudo-{A}nosov braids with small entropy and the magic 3-manifold}.
\newblock {\em Comm. Anal. Geo.} {\bf 19} (2011), 705--758.

\bibitem[KT2]{KT11}
E.~Kin and M.~Takasawa.
\newblock {Pseudo-Anosovs on closed surfaces having small entropy and the
  Whitehead sister link exterior}.
\newblock {\em J. Math. Soc. Japan} {\bf 65} (2013), 411--446.

\bibitem[Kit]{Kitchens98}
B.~Kitchens.
\newblock {\em Symbolic dynamics: one-sided, two-sided and countable state
  {M}arkov shifts}.
\newblock Springer, 1998.

\bibitem[KKT]{KKT:magicmanifold}
S.~Kojima, E.~Kin, and M.~Takasawa.
\newblock {Minimal dilatations of pseudo-{A}nosovs generated by the magic
  3-manifold and their asymptotic behavior}.
\newblock {\em Algebraic and Geometric Topology} {\bf 13} (2013), 3537--3602.

\bibitem[LT]{LT09}
E.~Lanneau and J.-L. Thiffeault.
\newblock {On the minimum dilatation of pseudo-{A}nosov homeomorphisms on
  surfaces of small genus}.
\newblock {\em Ann. de l'Inst. Four.} {\bf 61} (2009), 164--182.

\bibitem[Leh]{Lehmer33}
D.~H. Lehmer.
\newblock {Factorization of certain cyclotomic functions}.
\newblock {\em Ann. of Math.} {\bf 34} (1933), 461--469.

\bibitem[McM1]{McMullen:Poly}
C.~McMullen.
\newblock {Polynomial invariants for fibered 3-manifolds and {T}eichm\"uller
  geodesics for foliations}.
\newblock {\em Ann. Sci. \'Ecole Norm. Sup.} {\bf 33} (2000), 519--560.

\bibitem[McM2]{McM:Clique}
C.~McMullen.
\newblock {Entropy and the clique polynomial}.
\newblock {\em Preprint}  (2013).

\bibitem[Pen]{Penner91}
R.~Penner.
\newblock {Bounds on least dilatations}.
\newblock {\em Proceedings of the A.M.S.} {\bf 113} (1991), 443--450.

\bibitem[Rolf]{Rolfsen76}
D.~Rolfsen.
\newblock {\em Knots and Links}.
\newblock Publish or Perish, Inc, Berkeley, 1976.

\bibitem[Smy]{Smyth70}
C.~J. Smyth.
\newblock {On the product of the conjugates outside the unit circle of an
  algebraic integer}.
\newblock {\em Bull. London Math. Soc.} {\bf 3} (1971), 169--175.

\bibitem[Sta]{Stallings:folds}
J.~R. Stallings.
\newblock {Topology of finite graphs}.
\newblock {\em Invent. Math.} {\bf 71} (1983), 551--565.

\bibitem[Sun]{Sun12}
H.~Sun.
\newblock {A transcendental invariant of pseudo-Anosov maps}.
\newblock {\em preprint}  (2012).

\bibitem[Thu1]{Thurston:norm}
W.~Thurston.
\newblock {A norm for the homology of 3-manifolds}.
\newblock {\em Mem. Amer. Math. Soc.} {\bf 339} (1986), 99--130.

\bibitem[Thu2]{Thurston88}
W.~Thurston.
\newblock {On the geometry and dynamics of diffeomorphisms of surfaces}.
\newblock {\em Bull. Amer. Math. Soc. (N.S.)} {\bf 19} (1988), 417--431.

\bibitem[Thu3]{Thurston:Perron}
W.~Thurston.
\newblock {\em arXiv:1402.2008 [math.DS]}  (2014).

\end{thebibliography}

\bigskip

Department of Mathematics,
Florida State University,
1017 Academic Way,
Tallahassee, FL 32306-4510.
email: hironaka$@$math.fsu.edu
 \end{document}